\documentclass[12pt,a4paper,english]{smfart}
\usepackage[english]{babel}
\usepackage{ragged2e}
\usepackage{smfthm, mathabx}
\usepackage{stmaryrd}
\usepackage{amssymb}
\usepackage{amsmath}
\usepackage{amsthm}
\usepackage{amsfonts}
\usepackage{graphicx}
\usepackage{enumerate}
\usepackage{comment}
\usepackage{url}

\usepackage{tikz-cd}

\usepackage{appendix}

\usepackage{euscript,mathrsfs}
\usepackage[utf8]{inputenc} 
\usepackage{longtable}
\usepackage{dsfont}

\usepackage[OT2,T1]{fontenc}

\usepackage[all,cmtip]{xy}
\usepackage{alltt}

\usepackage{calrsfs}

\makeatletter
\def\thm@space@setup{%
  \thm@preskip=\parskip \thm@postskip=0pt
}
\makeatother

\xyoption{all}

\usepackage{float}

%%%%%%%%%%%%%%%%%%%%%%%%%%%

\usepackage{fullpage}

\usepackage{color}

%%%%%%%%%%%%%%%%%%%%%%%%%%%%%%%%%%%%%%%%%%%%%%%%%%%%%%%%

\newtheorem{Theorem}{Theorem}

\newtheorem{Exa}{Example}

\newtheorem*{Corollary}{Corollary}

%\renewcommand{\theCorollary}{\Alph{Corollary}}

%%%%%%%%%%%%%%%%%%%%%%%%%%%%%%%%%%%%%%%%%%%%%%%%%%%%%%%%%%

\author[Oussama Hamza]{Oussama Hamza}
\address{Institute for Advanced Studies in Mathematics, Harbin Institute of Technology, Harbin, China 150001}
\email{ohamza3@uwo.ca}

\author{Christian Maire}
 \address{Université Marie et Louis Pasteur,  CNRS, Institut FEMTO-ST, F-25000 Besançon, France}
\email{christian.maire@univ-fcomte.fr}

\author[J{\'a}n Min{\' a}{\v c}]{J{\'a}n Min{\' a}{\v c}}
\address{Department of Mathematics, Western University, London, Ontario, Canada N6A5B7}
\email{minac@uwo.ca}

\author[Nguy{\^ e}n Duy T{\^ a}n]{Nguy{\^ e}n Duy T{\^ a}n}
\address{Faculty of Mathematics and Informatics, Hanoi University of Science and Technology, No 1, Dai Co Viet Road, Hanoi, Vietnam}
\email{tan.nguyenduy@hust.edu.vn}

%%%%%%%%%%%%%%%%%%%%%%%%%%%%%%%%%%%%%%%%%%%%%%%%%%%%%%%

\title{Maximal $2$-extensions of Pythagorean fields and Right Angled Artin Groups}

%%%%%%%%%%%%%%%%%%%%%%%%%%%%%%%%%%%%%%%%%%%%%%%%%%%%%%

\subjclass{%12J15, 
20F05, 20F14, 20F40, 17A45}
%, 55S30}
\keywords{Maximal pro-$2$ quotients of absolute Galois groups, Pythagorean fields, graded and filtered algebras, pro-$2$ Right Angled Artin Groups, Galois cohomology}
\thanks{%This article is mostly inspired by the Master thesis of the first author supervised by Mathieu Florence and the second author, and the PhD thesis of the third author. 
The first and second author are grateful to the Institute for Advanced Studies in Mathematics (IASM) at Harbin Institute of Technology for support during the summer 2025. The first, second
and third authors acknowledge the support of the Western Academy for Advanced Research
(WAFAR) during the year 2022/23. The second author was partially supported by the EIPHI Graduate School (contract “ANR-17-EURE-0002") and by the Bourgogne Franche-Comté Region. The third author was partially supported by the Natural Sciences and Engineering Research Council of Canada (NSERC)
grant R0370A01. He gratefully acknowledges the Western University Faculty of Science Distinguished Professorship 2020-2021, and the current support of the Fields Institute for research in Mathematical Sciences. The fourth author is partially
supported by an AMS-Simons Travel Grant, and the Vietnam National Foundation
for Science and Technology Development (NAFOSTED) under grant number 101.04-2023.21.}
%and Thomas Weigel for careful review of this work. I am also thankful to Tatyana Barron, Elyes Boughattas, Mathieu Florence, Chris Hall, John Labute and Donghyeok Lim for discussions and encouragemens.
%Let me also thank Shubhankar Bhatt, Michal Cizek, Jarl G. Flaten Taxer\aa s, Tao Gong, Michael Rogelstad and Sayantan Roy Chowdhury for their interests in this work. 12G10, 12J10,}

%%%%%%%%%%%%%%%%%%%%%%%%%%%%%%%%%%%%%%%%%%%%%%

\newcommand{\F}{\mathbb{F}}
\newcommand{\Z}{\mathbb{Z}}
\newcommand{\NN}{\mathbb{N}}
\newcommand{\RR}{\mathbb{R}}

\newcommand{\C}{\mathbb{C}}

\def\G{{\rm G}}

%\def\Reg{{\rm Rg}}

%%%%%%%%%%%%%%%%%%%%%%%%%%%%%%%%%%%%%%

\def\Gal{{\rm Gal}}

%%%%%%%%%%%%%%%%%%%%%%%%%%%%%%%%%%%%%

\def\bz{{\mathbf{z}}}

\def\bX{{\mathbf{X}}}
\def\bE{{\mathbf{E}}}

\def\Rr{{\mathcal R}}
\def\PP{{\mathcal P}}
\def\J{{\mathcal J}}
\def\E{{\mathcal E}}
\def\Ll{{\mathcal L}}

\def\A{{\mathcal A}}

\def\I{{\mathcal I}}

\def\B{{\mathcal B}}

\parindent=24pt

\begin{document}

\begin{abstract}
%Pythagorean fields play an important role in the study of Witt rings of quadratic forms, Galois theory, orderings and valuation theory.
In this paper, we describe minimal presentations of maximal pro-$2$ quotients of absolute Galois groups of formally real Pythagorean fields of finite type. For this purpose, we introduce a new class of pro-$2$ groups: $\Delta$-Right Angled Artin groups. 

We show that maximal pro-$2$ quotients of absolute Galois groups of formally real Pythagorean fields of finite type are $\Delta$-Right Angled Artin groups. Conversely, let us assume that a maximal pro-$2$ quotient of an absolute Galois group is a $\Delta$-Right Angled Artin group. We then show that the underlying field must be Pythagorean, formally real and of finite type. As an application, we provide an example of a pro-$2$ group which is not a maximal pro-$2$ quotient of an absolute Galois group, although it has Koszul cohomology and satisfies both the Kernel Unipotent and the strong Massey Vanishing properties.

 We combine tools from group theory, filtrations and associated Lie algebras, profinite version of the Kurosh Theorem on subgroups of free products of groups, as well as several new techniques developed in this work.
\end{abstract}

\maketitle

\begin{comment}
This paper gives minimal presentations of $\PP$, the class of maximal pro-$2$ quotients of absolute Galois groups of formally real Pythagorean fields of finite type (RPF fields). 
%These presentations are related to Graph theory. They are uniquely determined by some underlying graphs.

For this purpose, we introduce a new class of pro-$2$ group. This class is called $\Delta$-Right Angled Artin groups ($\Delta$-RAAGs). Fix~$\Gamma$ a graph with~$d_\Gamma$ vertices. We also introduce a~$d_\Gamma$-uplet~$\bz$ on the pro-$p$ Right Angled Artin Group $G_\Gamma$ satisfying a technical condition. The class of $\Delta$-RAAGs is precisely characterized by $G_\Gamma$ and~$z$. This vector $z$ allows us to show that the class of $\Delta$-RAAGs is stable under coproducts. And so $\PP$ is a subclass of~$\Delta$-RAAGs.

Conversely, $\Delta$-RAAGs which are maximal quotients of pro-$2$ absolute Galois groups are necessarily in~$\PP$. Furthermore, they are uniquely determined by their underlying graphs. As an application, we give an example of a pro-$2$ group which is not a maximal pro-$2$ quotient of an absolute Galois group (over any field), but has Koszul cohomology and satisfies the Kernel Unipotent and the (strong) Massey Vanishing properties.
\end{comment}

%Absolute Galois groups play a fundamental role in Galois theory, number theory, algebraic geometry and algebraic topology. Despite their importance, their structure remains in general mysterious. 
A central open problem in Galois theory and number theory is to classify profinite groups that arise as absolute Galois groups. One natural approach is to study properties that such groups must satisfy. On the one hand, Artin–Schreier~\cite{artin1927kennzeichnung} showed that the only non-trivial finite subgroups of absolute Galois groups are of order $2$. On the other hand, the well known Bloch-Kato conjecture imposes strong restrictions on Galois cohomology. This conjecture was proved in $2011$ by Rost and Voevodsky (see~\cite{haesemeyer2019norm} 
and~\cite{056424e1-724a-3d58-b863-a9f884daae3d}). For some other results concerning absolute Galois groups of some special fields and some related topics in field arithmetic, we refer the interested reader to~\cite{efrat1994galois}, \cite{bary2015sylow}, \cite{haran2021absolute}, and \cite{fried2005field}.

Positselski~\cite{positselski2014galois} proposed a strengthened version of the Bloch–Kato conjecture. 
Let us fix a prime~$p$. A profinite group is said to satisfy the Koszul property (at~$p$) if its cohomology algebra over~$\F_p$ (with trivial action) is Koszul. This means that the cohomology algebra is generated in degree~$1$, with relations of degree~$2$, and admits a linear resolution. Positselski conjectured that absolute Galois groups, of fields containing a~$p$-th root of unity, satisfy the Koszul property. 
%This is a strengthened version of the Bloch–Kato conjecture. 
This property has significant applications in current algebra and geometry. We refer to~\cite{polishchuk2005quadratic} for a comprehensive exposition. 

In~\cite{minavc2015kernel} and~\cite{minavc2016triple}, the last two authors proposed further conjectures describing properties on absolute Galois groups: the Massey Vanishing and the Kernel Unipotent properties. The Massey Vanishing conjecture has attracted considerable attention. For instance, Harpaz and Wittenberg~\cite{harpaz2023massey} proved it for all algebraic number fields. We refer to~\cite{merkurjev2024lectures} for a complete exposition on the Massey Vanishing conjecture and recent results. 

This paper focuses on the class $\PP$ of maximal pro-$2$ quotients of absolute Galois groups of formally real Pythagorean fields of finite type (RPF fields). This class is deeply connected with Witt rings, orderings of fields, and the Milnor conjecture. We refer to the work of the third author and Spira \cite{minac1986galois}, \cite{minac1990formally}, \cite{minachilbseries} and \cite{minac1996witt}, Marshall \cite{marshall1979classification}, Jacob \cite{jacob1981structure}, Efrat-Haran~\cite{efrat1994galois} and Lam~\cite{lam2005introduction}. The third author in \cite{minac1986galois} and \cite{minac1986thesis} described the class $\PP$. It is the minimal class of pro-$2$ groups containing~$\Delta\coloneq \Z/2\Z$, stable under coproducts and some semi-direct products. 
With Spira \cite{minac1990formally} and \cite{minac1996witt}, they also showed that groups in~$\PP$ are characterized by finite quotients. Precisely their third Zassenhaus quotients. These results led to several consequences.  A first application was given by the last two authors with Pasini and Quadrelli. They showed that~$\PP$ satisfies the Koszul property \cite{minac2021koszul}. A second application was given by Quadrelli~\cite[Theorem~$1.2$]{quadrelli2023massey}. He proved that~$\PP$~satisfies the Massey Vanishing property.

Building on these results, this work aims to study presentations of groups in~$\PP$. As an application, we give several new examples of groups which are not pro-$2$ maximal quotients of absolute Galois groups.
Our approach relates $\PP$ to the well-known class of pro-$p$ Right Angled Artin Groups (RAAGs). We refer to~\cite{bartholdi2020right} for a general introduction. This class has recently played an important role in Galois theory.
We refer to the work of Snopce and Zalesski \cite[Theorem~$1.2$]{snopce2022right} and the work of~Blumer, Quadrelli and Weigel~\cite[Theorem~$1.1$]{blumer2023oriented}.
% gave a criterion, on the underlying graphs of RAAGs, to occur as maximal pro-$p$ quotients of absolute Galois groups.  Blumer, Quadrelli and Weigel \cite[Theorem~$1.1.(i)$]{blumer2023oriented} extended these results in the setting of oriented pro-$p$ RAAGs. It was also shown that pro-$p$ (oriented) RAAGs, appearing  as maximal pro-$p$ quotients of absolute Galois groups, satisfy the Koszul and the Massey Vanishing properties.

\subsection*{The class of $\Delta$-RAAGs}
Let us denote by~$x_0$ the generator of the multiplicative group~$\Delta\coloneq \Z/2\Z$. Intuitively, a~$\Delta$-Right Angled Artin group ($\Delta$-RAAG) is a semi-direct product of a (pro-$2$) Right Angled Artin Group (RAAG) by $\Delta$. The action inverts a "natural" set of generators up to conjugacy. 

Let~$\Gamma\coloneq (\bX, \bE)$ be an undirected graph with~$d_\Gamma$ vertices~$\{1,\dots, d_\Gamma\}$ and~$r_{\Gamma}$ edges. We recall that a pro-$2$ Right Angled Artin Group $G_\Gamma$ 
%and denote by $c_n(\Gamma)$ the number of $n$-cliques of $\Gamma$, i.e. complete subgraphs with $n$ vertices. 
is defined by a pro-$2$ presentation with
%$$1\to R \to F \to G_\Gamma \to 1,$$
%where $F\coloneq F(d_\Gamma)$ is the free pro-$2$ group on 
$d_\Gamma$ generators $\{x_1,\dots, x_{d_\Gamma}\}$ 
%and $R$ is the normal closed subgroup of $F(d_\Gamma)$ generated by 
and relations~$\{\lbrack x_i,x_j\rbrack \coloneq x_i^{-1}x_j^{-1}x_ix_j\}_{\{i,j\}\in \bE}$. Fix $\bz\coloneq (z_{i})_{i=1}^{d_\Gamma}$, a $d_\Gamma$-uplet in~$G_\Gamma$. We assume that there exists an action~$\delta_\bz\colon \Delta \to {\rm Aut}(G_\Gamma)$, which is well-defined and satisfies the condition:
\begin{equation}\tag{conj}\label{conj}
\delta_\bz(x_0)(x_i)\coloneq (x_i^{-1})^{z_{i}}\coloneq z_i^{-1}x_i^{-1}z_i=\lbrack z_i,x_i\rbrack x_i^{-1}, \quad \text{for } 1\leq i \leq d_\Gamma.
\end{equation}
Set $G_{\bz}\coloneq G_\Gamma \rtimes_{\delta_\bz}\Delta$. We define the class of~$\Delta$-RAAGs as the class of all pro-$2$ groups given by~$G_{\bz}$ where~$\Gamma$ varies along all graphs. 
%and $\bz$ are $d_\Gamma$-tuples such that $\delta_\bz$ is well defined and satisfies the condition~\eqref{conj}. 
As a consequence, the presentation of the pro-$2$ group~$G_{\bz}$ is given by $d_{\Gamma}+1$ generators and $r_{\Gamma}+d_{\Gamma}+1$ relations:
\begin{multline}\tag{$\bz$-Pres} \label{z-Pres}
G_\bz\coloneq \langle x_0, x_1,\dots, x_{d_\Gamma} |\quad \lbrack x_u,x_v\rbrack=1, \lbrack x_0,x_i^{-1}\rbrack x_i^2\lbrack x_i,z_i\rbrack=1, x_0^2=1, 
\\ \text{ for } \{u,v\}\in \bE, 1\leq i \leq d_\Gamma\rangle.
\end{multline}
We have explicit examples from \cite[Proposition $3.16$]{hamza2023zassenhaus}: every graph~$\Gamma$ and family~$\bz_0\coloneq (1,\dots, 1)$ gives a well-defined group~$G_{\bz_0}\coloneq G_\Gamma \rtimes_{\delta_{\bz_0}}\Delta$. In that case, the condition~\eqref{conj} is defined by:
$$\delta_{\bz_0}(x_0)(x_i)\coloneq x_i^{-1}, \quad \text{for } 1\leq i \leq d_\Gamma.$$
%, and satisfying the following equality on the Zassenhaus filtration:
%$$G_{\Gamma,\delta,n}=G_{\Gamma,n}, \quad \text{for } n\geq 2.$$ 

A major innovation of this paper is the introduction of the action $\delta_\bz$, in the definition of $\Delta$-RAAGs.
%As we shall see, the family of $\Delta$-RAAGs forms a natural home for the maximal pro-2-quotients of absolute Galois groups of formally real Pythagorean fields of finite type.
Using the action~$\delta_\bz$ and a profinite version of the Kurosh Subgroup Theorem, we show that the class of $\Delta$-RAAGs is stable under coproducts. This is Theorem~\ref{technical coprod}. From Proposition~\ref{rem semitriv2}, we also observe that this class is stable under some specific semi-direct products. As a consequence, we conclude from \cite{minac1986galois} that~$\PP$ is a subclass of $\Delta$-RAAGs. 
The study of $\Delta$-RAAGs is also motivated by Proposition~\ref{lower RAAGs}, which allows us to recover the Zassenhaus filtration of a $\Delta$-RAAG from its underlying graph.

\subsection*{Our results}
Let $K$ be a field of characteristic different from~$2$. Denote by $G_K$ the maximal pro-$2$ quotient of its absolute Galois group. Define $L\coloneq K(\sqrt{-1})$. 
%Set $G_L$ the maximal pro-$2$ quotient of the absolute Galois group of~$L$. 
We assume that $K$ is a formally real Pythagorean field of finite type (abbreviated RPF field). This means that $(i)$~$-1$ is not a square, $(ii)$ the sum of two squares is a square, $(iii)$~the group~$K^{\times}/K^{\times 2}$ is finite.

Observe that $\Delta \simeq \Gal(L/K)$ and we have an exact sequence of pro-$2$ groups:
\begin{equation}\label{split Pyt}
1\to G_L \to G_K \to \Delta \to 1.
\end{equation}

The following result is analogous to to \cite[Theorem~$1.2$]{snopce2022right} and \cite[Theorem~$1.1$]{blumer2023oriented}, and also relies on graph theory. It gives a new property satisfied by pro-$2$ maximal quotients of absolute Galois groups.

\begin{Theorem}[Theorems~\ref{mainpyt} and \ref{Graph and Pyt}]\label{Theo fonda}
If $K$ is a RPF field, then the exact sequence~\eqref{split Pyt} splits. The pro-$2$-group $G_L$ is RAAG, and $G_K$ is $\Delta$-RAAG. Conversely, let~$K$ be a field, and assume that~$G_K$ is a~$\Delta$-RAAG. Then~$K$ is a RPF field. Moreover,~$G_K$ is uniquely determined by its underlying graph.
\end{Theorem}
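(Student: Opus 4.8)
The plan is to prove Theorem~\ref{Theo fonda} in two directions, treating the forward implication (RPF $\Rightarrow$ $\Delta$-RAAG, with splitting) and the converse ($\Delta$-RAAG $\Rightarrow$ RPF, plus uniqueness) separately. For the forward direction, I would start from the known structure theory of RPF fields recalled in the introduction: by the third author's work \cite{minac1986galois}, the class $\PP$ is the minimal class of pro-$2$ groups containing $\Delta = \Z/2\Z$ and stable under coproducts and the relevant semi-direct products. So I would argue by structural induction along this generation procedure. The base case $G_K = \Delta$ is immediate. For the inductive step, I would invoke Theorem~\ref{technical coprod} (stability of $\Delta$-RAAGs under coproducts) and Proposition~\ref{rem semitriv2} (stability under the specific semi-direct products) to conclude that any group built from $\Delta$ by these operations is again a $\Delta$-RAAG. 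This simultaneously yields that $G_K$ is a $\Delta$-RAAG, that $G_L$ (the index-$2$ normal subgroup) is the underlying RAAG $G_\Gamma$, and that the sequence~\eqref{split Pyt} splits, since a $\Delta$-RAAG is by its very construction $G_{\bz} = G_\Gamma \rtimes_{\delta_\bz}\Delta$, an internal semi-direct product exhibiting a splitting of the projection onto $\Delta$.

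For the converse, I would assume $G_K$ is a $\Delta$-RAAG and work to recover the field-theoretic properties. First, from the presentation~\eqref{z-Pres} I can read off the cohomological invariants of $G_K$: the relation $x_0^2 = 1$ forces a nontrivial element of order $2$, which by Artin--Schreier \cite{artin1927kennzeichnung} means $K$ is formally real, giving property $(i)$. The finiteness of the generating set $\{x_0, x_1, \dots, x_{d_\Gamma}\}$ translates into $\dim_{\F_2} K^{\times}/K^{\times 2} < \infty$, i.e.\ property $(iii)$, the finite-type condition. For the Pythagorean property $(ii)$, I would compare the cohomology of $G_K$ predicted by the $\Delta$-RAAG presentation with the Galois cohomology $H^{\bullet}(G_K, \F_2)$: the quadratic/Koszul shape of the $\Delta$-RAAG relations (commutators in degree $2$ plus the twisted relations) must match the structure of the Witt ring, and the absence of extra relations beyond those in~\eqref{z-Pres} is exactly what characterizes Pythagorean fields among formally real fields of finite type. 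Here I expect to lean on the Minač--Spira characterization via the third Zassenhaus quotient recalled in the introduction, together with the Zassenhaus-filtration computation of Proposition~\ref{lower RAAGs}.

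The uniqueness statement ``$G_K$ is uniquely determined by its underlying graph'' I would handle last, using Proposition~\ref{lower RAAGs}, which recovers the Zassenhaus filtration of a $\Delta$-RAAG from its graph $\Gamma$. The point is that the isomorphism type of the whole pro-$2$ group is pinned down by finitely many quotients (the third Zassenhaus quotient, by Minač--Spira), and these quotients are in turn determined by $\Gamma$; conversely $\Gamma$ is recoverable from $G_K$ as the commuting graph visible in $G_L/G_{L,3}$ (or equivalently from the cup-product structure in $H^2$). Thus $\Gamma \leftrightarrow G_K$ is a bijection on the relevant class.

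I expect the main obstacle to be the converse direction, specifically proving the Pythagorean property $(ii)$. Properties $(i)$ and $(iii)$ follow relatively formally from the presentation and Artin--Schreier, but $(ii)$ is a genuinely arithmetic statement about $K$ that must be extracted from purely group-theoretic/cohomological data. The delicate step is showing that the rigidity encoded in the action $\delta_\bz$ and the condition~\eqref{conj} forces the Witt ring of $K$ to have no relations beyond those dictated by orderings --- equivalently, that every sum of two squares is a square. This requires translating the \emph{shape} of the relations in~\eqref{z-Pres} (and the fact that there are no others) into the statement that $K^{\times}/K^{\times 2}$ together with its quadratic form structure is that of a Pythagorean field; I anticipate this will be the technical heart, and is presumably where the new techniques advertised in the abstract, combined with the profinite Kurosh theorem and the Zassenhaus-filtration control from Proposition~\ref{lower RAAGs}, do the real work.
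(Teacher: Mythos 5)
Your forward direction is exactly the paper's: $\Delta$ is a $\Delta$-RAAG, Proposition~\ref{rem semitriv2} and Theorem~\ref{technical coprod} show that the class of $\Delta$-RAAGs is closed under the two operations of Theorem~\ref{structure}, and minimality of $\PP$ gives the inclusion (the paper proves the splitting of~\eqref{split Pyt} separately, in Theorem~\ref{semidirect}, but your derivation of it from the semi-direct product structure works once one invokes the uniqueness in Proposition~\ref{actioninv}, plus torsion-freeness of $G_\Gamma$, to identify the projection $G_\bz\to\Delta$ with $\phi_K$ and hence its kernel with $G_L$). The first genuine gap is in the converse, precisely at the step you defer: the Pythagorean property. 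Your proposed criterion --- that ``absence of extra relations beyond~\eqref{z-Pres} characterizes Pythagorean fields'' --- is not the criterion that works, and you give no argument for it. The paper's route is short and different: by \cite[Proposition~$2.1$]{minac1996witt} the third Zassenhaus quotient $\mathcal{G}_\bz\coloneq G_\bz/G_{\bz,3}$ is the Galois group $\Gal(K^{(3)}/K)$; by Corollary~\ref{Delta and Pyt} (which rests on Lemma~\ref{lemm low}, i.e.\ on the relations $\lbrack x_0,x_i^{-1}\rbrack x_i^2\lbrack x_i,z_i\rbrack=1$ forcing $\lbrack G_\bz,G_\bz\rbrack=G_\bz^2$) this quotient satisfies $\mathcal{G}_\bz^2=\lbrack\mathcal{G}_\bz,\mathcal{G}_\bz\rbrack$; and the Min{\'a}{\v c}--Spira Witt-group characterization \cite[Theorems~$2.7$ and~$2.11$]{minac1990formally} then yields that $K$ is RPF. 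So what you call the ``technical heart'' is a two-line argument once one knows the correct Witt-group criterion; the cohomological matching you sketch would not produce it.

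The second, more serious gap is uniqueness. Your plan is: (a) groups in $\PP$ are determined by their third Zassenhaus quotients (Min{\'a}{\v c}--Spira), and (b) this quotient is determined by $\Gamma$. Step (b) is unproven, and it is essentially the statement being proved: the third Zassenhaus quotient of $G_\bz$ is $(G_\Gamma/G_{\Gamma,3})\rtimes_{\overline{\delta_\bz}}\Delta$, and the induced action $\overline{\delta_\bz}$ genuinely depends on $\bz$, not only on $\Gamma$. Example~\ref{second set of examples} shows this dependence is real: $G_1\coloneq G_{\Gamma_1}\rtimes_{\delta_{\bz_1}}\Delta$ and $G_3\coloneq G_{\Gamma_1}\rtimes_{\delta_{\bz_0'}}\Delta$ have the same underlying graph but are not isomorphic (one lies in $\PP$, the other does not). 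Hence the graph does not determine a $\Delta$-RAAG in general; uniqueness holds only inside $\PP$, and any proof must exploit membership in $\PP$ structurally rather than formally. The paper does this by induction on $d_\Gamma$ (Theorem~\ref{Graph and Pyt}): Lemma~\ref{connected graph} handles connected $\Gamma$ via $G_K\simeq G_{\Gamma^{cu_\Gamma}}\rtimes_{i_{u_\Gamma}}G_{K'}$ with $\Gamma\simeq\Gamma^{cu_\Gamma}\nabla\Gamma'$, and Lemma~\ref{disconnected graph} handles disconnected $\Gamma$ via a coproduct decomposition; together these rebuild $G_K$ canonically from $\Gamma$ and, in effect, normalize $\bz$. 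The half of uniqueness you do get right --- that $\Gamma$ is recoverable from $G_K$ --- is Corollary~\ref{graph iso}, via Proposition~\ref{lower RAAGs} and Lemma~\ref{Graph identification}.
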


From presentations of pro-$2$ groups given by Theorem~\ref{Theo fonda},
%allows us to obtain several new explicit examples of maximal pro-$2$ quotients of absolute Galois groups. Precisely 
we get new examples of pro-$2$ groups in~$\PP$  (Example~\ref{intermexam}). We also infer new examples of pro-$2$ groups which are not maximal pro-$2$ quotients of absolute Galois groups (Example~\ref{second set of examples}). Furthermore, if~$K$ is a RPF field, Theorem~\ref{Theo fonda} shows that the group~$G_K$ admits a quadratic presentation. This presentation is given by~\eqref{z-Pres}. This allows us to positively answer a question raised by Weigel~\cite{weigel652koszul} for groups in~$\PP$. See Proposition~\ref{koszulpyt}. From~\cite[Proposition $1$]{hamza2023extensions} and~\cite{leoni2024zassenhaus}, this question is related to the Positselski conjecture. Let us highlight the following consequence from the~$\Delta$-RAAG theory.

\begin{Corollary}[Proposition \ref{koszulpyt}]\label{Ans Weigel}
Assume that~$G$ is in $\PP$. Then the presentation~\eqref{z-Pres} of~$G$, given by Theorem~\ref{Theo fonda} is minimal.  Let us denote by~$\Gamma$ the underlying graph of~$G$. Then:
$$H^\bullet(G,t)\coloneq \sum_n \dim_{\F_2}H^n(G) t^n=\frac{\Gamma(t)}{1-t}.$$
Here $\Gamma(t)\coloneq \sum_n c_n(\Gamma)t^n$, with $c_n(\Gamma)$ the number of $n$-cliques of $\Gamma$, i.e. maximal complete subgraphs of~$\Gamma$ with $n$ vertices.
\end{Corollary}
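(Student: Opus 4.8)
The plan is to combine the quadratic presentation \eqref{z-Pres} furnished by Theorem~\ref{Theo fonda} with the Koszulity of $\PP$ and a Hilbert-series (Koszul duality) argument. Since $G\in\PP$, Theorem~\ref{Theo fonda} realizes $G$ as the $\Delta$-RAAG $G_\bz$ attached to its underlying graph $\Gamma$, with presentation \eqref{z-Pres} having $d_\Gamma+1$ generators and $r_\Gamma+d_\Gamma+1$ relations, all of degree at most $2$. Recall that for a pro-$2$ group the minimal number of generators equals $\dim_{\F_2}H^1(G)$ and the minimal number of relations equals $\dim_{\F_2}H^2(G)$. Hence the whole statement reduces to one computation: determining the Hilbert series $H^\bullet(G,t)$ and reading off its first coefficients.

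For the Hilbert series I would proceed by duality. By Proposition~\ref{lower RAAGs} the Zassenhaus filtration of the $\Delta$-RAAG $G$ is recovered from $\Gamma$; concretely this computes the Hilbert series of the graded $\F_2$-algebra $U\coloneq\operatorname{gr}\F_2[[G]]$ associated to the augmentation-ideal filtration, namely $h_U(t)\coloneq\sum_n\dim_{\F_2}U_n\,t^n=\frac{1+t}{\Gamma(-t)}$, where the factor $1+t$ reflects the class $x_0$ with $x_0^2=1$ and $\Gamma(-t)$ is the clique contribution of the RAAG part $G_L$. By \cite{minac2021koszul} the algebra $H^\bullet(G,\F_2)$ is Koszul; being Koszul, it is the quadratic dual $U^!$, and $U$ is Koszul as well. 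The numerical Koszul duality identity $h_U(t)\cdot H^\bullet(G,-t)=1$ then applies. Substituting $h_U(t)=\frac{1+t}{\Gamma(-t)}$ and replacing $t$ by $-t$ yields $H^\bullet(G,t)=\frac{\Gamma(t)}{1-t}$, as claimed.

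Minimality is then immediate. Writing $\Gamma(t)=1+d_\Gamma t+r_\Gamma t^2+\cdots$ (so $c_0(\Gamma)=1$, $c_1(\Gamma)=d_\Gamma$, $c_2(\Gamma)=r_\Gamma$), the formula $H^\bullet(G,t)=\frac{\Gamma(t)}{1-t}$ gives $\dim_{\F_2}H^1(G)=d_\Gamma+1$ and $\dim_{\F_2}H^2(G)=r_\Gamma+d_\Gamma+1$. These coincide exactly with the numbers of generators and relations of \eqref{z-Pres}. Since any presentation has at least $\dim_{\F_2}H^1(G)$ generators and at least $\dim_{\F_2}H^2(G)$ relations, and \eqref{z-Pres} attains both bounds simultaneously, it is a minimal presentation of $G$.

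The main obstacle is the middle step: justifying that $H^\bullet(G,\F_2)$ is \emph{precisely} the quadratic dual of the graded group algebra $U$ whose Hilbert series is computed via Proposition~\ref{lower RAAGs}, and that Koszulity transfers between the two so that the multiplicative identity $h_U(t)\cdot H^\bullet(G,-t)=1$ is genuinely available (for merely quadratic, non-Koszul algebras one only obtains a coefficientwise inequality). This rests on the interaction between the quadratic presentation \eqref{z-Pres}, the explicit graded algebra of a $\Delta$-RAAG, and the Koszulity result of \cite{minac2021koszul} for $\PP$; once these are aligned the remainder is a formal manipulation of power series. As an alternative that sidesteps Koszul duality, one could use that the sequence \eqref{split Pyt} splits (Theorem~\ref{Theo fonda}), observe that $\Delta\simeq\Z/2\Z$ acts trivially on $H^\bullet(G_L,\F_2)$ (inversion is trivial in characteristic $2$ and conjugation acts trivially on cohomology), and show that the Lyndon--Hochschild--Serre spectral sequence degenerates at $E_2$, giving $H^\bullet(G,t)=\frac{1}{1-t}\,H^\bullet(G_L,t)=\frac{\Gamma(t)}{1-t}$ since $H^\bullet(G_L,\F_2)$ of the RAAG $G_L$ is the clique algebra with Poincaré series $\Gamma(t)$; here establishing the degeneration for this split extension would be the delicate point.
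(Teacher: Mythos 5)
Your proof follows essentially the same route as the paper's: Koszulity of $H^\bullet(G)$ from \cite{minac2021koszul}, identification of $H^\bullet(G)$ with the quadratic dual of the graded algebra $\E(G)$ (your $U$), the numerical Koszul-duality identity applied to the gocha series $\frac{1+t}{\Gamma(-t)}$ of Corollary~\ref{gocha Delta Raag2}, and then reading off the first coefficients of $\frac{\Gamma(t)}{1-t}$ to get minimality. The step you single out as the main obstacle is precisely what the paper settles by Remark~\ref{quad defined} (the presentation \eqref{z-Pres} is quadratically defined, since Theorem~\ref{gocha Delta Raag} gives $\E(G_\bz)\simeq \E_\bz$) combined with \cite[Theorem~F]{minac2021koszul}, which yields that $H^\bullet(G)$ is the quadratic dual of $\E(G)$ and hence that $\E(G)$ is Koszul.
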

Let us recall that~$d_\Gamma$ and~$r_\Gamma$ are the number of vertices and edges of~$\Gamma$. The previous corollary tells us that the minimal number of generators and relations of $G$ is~$d_\Gamma+1$ and~$r_\Gamma+d_\Gamma+1$.

We conclude this paper with the first known example of a pro-$2$ group which satisfies the Koszul, the (strong) Massey Vanishing and the Kernel Unipotent properties, but is not a maximal pro-$2$ quotient of an absolute Galois group. This example is studied in detail in Section~\ref{final example}.

\begin{Theorem}[Theorem~\ref{counterexample}]\label{impo exam}
The pro-$2$ group
\begin{multline*}
G\coloneq \langle x_0, x_1, x_2,x_3,x_4| \quad \lbrack x_1,x_2\rbrack =\lbrack x_2,x_3\rbrack =\lbrack x_3,x_4\rbrack = \lbrack x_4,x_1\rbrack =1,
\\ x_0^2=1, \quad x_0x_jx_0x_j=1, \forall j\in [\![1;4]\!]\rangle
\end{multline*}
 does not occur as a maximal pro-$2$ quotient of an absolute Galois group. But it satisfies the Koszul, the (strong) Massey Vanishing and the Kernel Unipotent properties.
\end{Theorem}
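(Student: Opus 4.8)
The plan is to begin by recognizing that $G$ is exactly the $\Delta$-RAAG attached to the $4$-cycle. Writing $\Gamma = C_4$ for the graph on vertices $\{1,2,3,4\}$ with edges $\{1,2\},\{2,3\},\{3,4\},\{4,1\}$, the four commutator relations in $G$ are precisely the edge relations of $C_4$, while $x_0^2=1$ together with $x_0x_jx_0x_j=1$ (equivalently $x_0x_jx_0^{-1}=x_j^{-1}$) realize the action $\delta_{\bz_0}(x_0)(x_j)=x_j^{-1}$ of the trivial family $\bz_0=(1,1,1,1)$. Hence $G = G_{C_4}\rtimes_{\delta_{\bz_0}}\Delta$, and this is exactly the presentation \eqref{z-Pres} for $(C_4,\bz_0)$; in particular $\ker(G\to\Delta)$ is the pro-$2$ RAAG $G_{C_4}$, and the whole $\Delta$-RAAG machinery of the paper applies to $G$.

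For the first assertion I would argue by contradiction. Suppose $G\simeq G_K$ for some field $K$ of characteristic $\neq 2$. Since $G$ is a $\Delta$-RAAG, the converse half of Theorem~\ref{Theo fonda} forces $K$ to be an RPF field, and then the splitting \eqref{split Pyt} identifies $\ker(G\to\Delta)=G_{C_4}$ with $G_L$, where $L=K(\sqrt{-1})$. Thus the pro-$2$ RAAG $G_{C_4}$ would be the maximal pro-$2$ Galois group of $L$, a field containing $\sqrt{-1}$. But by Snopce--Zalesskii \cite[Theorem~1.2]{snopce2022right} a pro-$2$ RAAG $G_\Gamma$ is realizable as a maximal pro-$2$ Galois group only if $\Gamma$ is chordal, whereas $C_4$ is the chordless $4$-cycle, hence not chordal. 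This contradiction shows that $G$ does not occur as a maximal pro-$2$ quotient of an absolute Galois group over any field.

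For the second assertion I would work entirely inside the $\Delta$-RAAG framework, since $G\notin\PP$ and the Galois-theoretic inputs of \cite{minac2021koszul} and \cite{quadrelli2023massey} are unavailable. First, using Proposition~\ref{lower RAAGs} and the attached computation of the Zassenhaus filtration of a $\Delta$-RAAG from its graph, I would compute $H^\bullet(G,\F_2)$ from the quadratic presentation \eqref{z-Pres}; the computation is purely structural, identical in form to Corollary~\ref{Ans Weigel}, and yields the Hilbert series $H^\bullet(G,t)=\Gamma(t)/(1-t)$ with clique polynomial $\Gamma(t)=1+4t+4t^2$. Koszulity would then be confirmed by exhibiting a quadratic Gr\"obner basis for the dual relations, a finite check depending only on $C_4$ and not on chordality. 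The Kernel Unipotent and strong Massey Vanishing properties I would establish directly from the semidirect structure $G=G_{C_4}\rtimes\Delta$: every defined higher Massey product of degree-$1$ classes would be shown to contain $0$ by constructing the required unipotent representations explicitly out of the RAAG $G_{C_4}$ and the inversion action of $\Delta$, and the Kernel Unipotent property would follow from the explicit Zassenhaus filtration together with its reformulation in terms of unipotent representations.

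The main obstacle is the second assertion rather than the first. Proving that a \emph{provably non-Galois} group satisfies all three ``good'' properties means none of the usual Galois-theoretic shortcuts apply, so everything must be extracted from the combinatorics of $C_4$ and the filtration/Lie-algebra description of $\Delta$-RAAGs. Among the three, strong Massey Vanishing is the most delicate, since it demands simultaneous control of all higher Massey products; the plan is to reduce it to the explicit construction of unipotent representations afforded by the decomposition $G=G_{C_4}\rtimes\Delta$, from which the Kernel Unipotent property follows in tandem.
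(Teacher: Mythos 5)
Your proof of the first assertion is correct and is essentially the paper's own argument (Lemma~\ref{not absolute quotient}): identify $G$ as the $\Delta$-RAAG on $C_4$ with $\bz_0=(1,1,1,1)$, invoke the converse direction of Theorem~\ref{mainpyt} to force $K$ to be RPF, and then derive a contradiction from \cite[Theorem~1.2]{snopce2022right} applied to $G_{C_4}\simeq G_L$. (Minor imprecision: the Snopce--Zalesskii condition is ``no induced $C_4$ and no induced line $L_3$,'' not chordality, but $C_4$ fails either way.) Your Koszulity step also matches the paper: Lemma~\ref{minpres} proves $\E(G)$ is Koszul by exactly the finite confluence/PBW (Gr\"obner basis) check on the quadratic presentation that you describe, and the Poincar\'e series $\frac{1+4t+4t^2}{1-t}$ comes out as you say.

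The gap is in your treatment of the Kernel Unipotent and strong Massey Vanishing properties. Your plan --- build unipotent representations ``explicitly out of the RAAG $G_{C_4}$ and the inversion action of $\Delta$'' --- does not confront the actual obstruction. For $n\geq 2$ the elements to detect lie in $G_{\Gamma,n}$ (Proposition~\ref{lower RAAGs}), and it is easy to detect them by unipotent representations of $G_\Gamma=F_{13}\times F_{24}$; the hard part is extending such a representation to $G$, which requires an involution $B\in\mathbb{U}_{n+1}$ conjugating each generator's image $A$ to $A^{-1}$, and for Massey products the hypothesis $\alpha_i\cup\alpha_{i+1}=0$ must be used somewhere --- your sketch never uses it. Moreover, your premise that the Galois-theoretic inputs of \cite{minac2021koszul} and \cite{quadrelli2023massey} are ``unavailable'' misses the paper's key idea: they are unavailable for $G$ itself, but the paper applies them to subgroups of $G$. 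Concretely, the paper introduces $G_{13}=\langle x_0,x_1,x_3\rangle$ and $G_{24}=\langle x_0,x_2,x_4\rangle$, proves each is isomorphic to $\Delta\coprod\Delta\coprod\Delta$ (a SAP group, hence in $\PP$) and establishes the two decompositions $G\simeq F_{24}\rtimes G_{13}\simeq F_{13}\rtimes G_{24}$ (Lemma~\ref{decomposition G}). Kernel Unipotent then follows by a two-case projection argument (detect the $G_{13}$-component if it is nontrivial; otherwise embed $F_{24}\subset G_{24}$ and use $G_{24}$), with the base case $\Delta\coprod\Delta\coprod\Delta$ handled by an explicit Magnus-algebra construction (Lemma~\ref{kerfree}). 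Strong Massey Vanishing requires the cup-product case analysis of Lemma~\ref{cupzero}: any admissible family lies entirely in $H^1(G_{13})$, or entirely in $H^1(G_{24})$ (both cases settled by Quadrelli's theorem for $\PP$), or satisfies $\alpha_{i+1}=\alpha_i+\chi_0$; only this last case needs a new construction, supplied by Quadrelli's matrices with $B^2=\mathbb{I}_{n+1}$ and $\lbrack B,A\rbrack=A^{-2}$ (Lemma~\ref{lemmquad}). Without this reduction to $\PP$-subgroups (or a genuinely new substitute for it), your plan stalls exactly at the two properties you yourself flag as the most delicate.
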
 

\subsection*{Acknowledgments}
The first three authors are thankful to Mathieu Florence and Claudio Quadrelli for several discussions and for their interest in this work. They are also grateful to Michael Rogelstad for several comments. Furthermore, they acknowledge Federico Scavia, Thomas Weigel and Olivier Wittenberg for discussions. The first author is grateful to Elyes Boughattas, Baptiste Cerclé, Keping Huang, Mahmoud Sharkawi and Jun Wang for their interest in this work. Furthermore, he is especially grateful to Priya Bucha Jain, Manimugdha Saikia and Sayantan Roy Chowdhury for their encouragement and support. The second and third authors are grateful to Alexander Merkurjev and Federico Scavia for discussions related to Massey products and
absolute Galois properties.

\tableofcontents

\section*{List of notations}
This table summarizes the main symbols and notations used in the paper.
Most of these notations were also used in~\cite{hamza2023zassenhaus} and~\cite{hamza2023extensions}.

\subsection*{Group and Galois theoretic notations} We start with some algebraic notations. All subgroups of a profinite group will be considered closed. 
\renewcommand{\arraystretch}{1.2}
\begin{longtable}{|p{0.35\textwidth}|p{0.6\textwidth}|}
\hline
\textbf{Symbol / Notation} & \textbf{Description} \\ \hline
$K$, $L$ & $K$ a field of characteristic different from~$2$, $L\coloneq K(\sqrt{-1})$. \\ \hline
$G_K$, $G_L$ & The maximal pro-$2$ quotient of the absolute Galois group of $K$, $L$. \\ \hline
RPF fields & Formally real Pythagorean fields of finite type. \\ \hline
$\PP$ & The class of maximal pro-$2$ quotients of absolute Galois groups of RPF fields. \\ \hline
$\Delta \coloneq \mathbb{Z}/2\mathbb{Z}$ & Multiplicative group with two elements, generated by~$x_0$. \\ \hline
$F(d), F$ & Free pro-2 group on $d$ generators; written as $F$ when $d$ is clear. \\ \hline
$R$ & Normal closed subgroup of $F$ generated by the relations $l_1, \ldots, l_r$. \\ \hline
$G\coloneq F/R$ & A finitely presented pro-$2$ group presented by generators $\{x_0, x_1, \ldots, x_d\}$ and relations $\{l_1, \ldots, l_r\}$. \\ \hline
$x^y \coloneq y^{-1}xy$ & Conjugation of $x$ by $y$. \\ \hline
$[x, y] \coloneq x^{-1}y^{-1}xy$ & Commutator of $x$ and $y$. \\ \hline
$[H, H]$ & Normal closure of the subgroup generated by commutators of elements in~$H$, a subgroup of~$G$. \\ \hline
$H^2$ & Normal closure of the subgroup generated by all squares in~$H$, a subgroup of~$G$. \\ \hline
$H_1H_2$ & Normal closure of the subgroup generated by $ab$, for~$a\in H_1$, $b\in H_2$ and $H_1$ and $H_2$ subgroups of~$G$. \\ \hline
$\mathbb{U}_{n}$ & Group of $n\times n$ upper triagular unipotent matrices over $\mathbb{F}_2$. \\ \hline
$\mathbb{I}_{n}$ & Identity matrix of size $n\times n$ over~$\F_2$. \\ \hline
$H^n(G)$ & The $n$-th continuous cohomology group of the trivial $G$-module~$\mathbb{F}_2$. \\ \hline
$H^\bullet(G)\coloneq \bigoplus_{n\in \NN} H^n(G)$ & Graded cohomology ring. \\ \hline
$H^\bullet(G, t)\coloneq \sum_{n} \dim_{\mathbb{F}_2} H^n(G)t^n$ & Poincaré series of~$G$. It is defined when $\dim_{\mathbb{F}_2} H^n(G)$ is finite for every~$n$ in~$\NN$. \\ \hline
$E(G)$ & Completed group algebra of $G$ over $\mathbb{F}_2$. \\ \hline
$E_n(G)$ & The $n$-th power of the augmentation ideal of $E(G)$, for~$n$ in~$\NN$. \\ \hline
 $G_n\coloneq \{ g \in G\mid g-1 \in E_n(G) \}$ & Zassenhaus filtration of~$G$. \\ \hline
\end{longtable}

\subsection*{Lie algebras associated to filtrations} We continue with some notations on Lie algebras coming from the Zassenhaus filtrations:
\renewcommand{\arraystretch}{1.2}
\begin{longtable}{|p{0.35\textwidth}|p{0.6\textwidth}|}
\hline
\textbf{Symbol / Notation} & \textbf{Description} \\ \hline
$\Ll(G)\coloneq \bigoplus_{n\in \NN} \Ll_n(G)$ & Graded Lie algebra where $\Ll_n(G)\coloneq G_n / G_{n+1}$. \\ \hline
$\E(G)\coloneq \bigoplus_{n\in \NN} \E_n(G)$ & Graded algebra where $\E_n(G)\coloneq E_n(G) / E_{n+1}(G)$. \\ \hline
$gocha(G,t)\coloneq \sum_{n\in \NN}c_n(G)t^n$ & Gocha series of~$G$, where~$c_n\coloneq \dim_{\mathbb{F}_2} E_n(G)/E_{n+1}(G)$. This name comes from Golod and Shafarevich.\\ \hline
$\Ll$, $E$, $\E$ & Restricted free graded Lie algebra, algebra of noncommutative polynomials, and noncommutative series algebra on $\{X_0, \ldots, X_d\}$ over $\mathbb{F}_2$, where each~$X_i$ has degree~$1$. \\ \hline
$\psi\colon E(F_{d+1})\to E$ & Magnus isomorphism sending $x_i \mapsto 1 + X_i$. \\ \hline
$l$, $n_l$ & We denote by $l$ an element in $F$ and $n_l$ is the least integer~$n$ such that $l$ is in $F_n\setminus F_{n+1}$. \\ \hline
$I$, $\I$ and $\J$ & Ideal (closed, graded and Lie-graded) generated by $\{\psi(l)-1\mid l\in R\}$, the image of $\psi(l)-1$ in $E_{n_l}/E_{n_l+1}$ for $l$~in $R$ and the image of $l$ in $F_{n_l}/F_{n_l+1}$.  \\ \hline
$\psi_G \colon E(G)\to E/I$ & It comes from the Magnus isomorphism. Observe that we have: $\E(G)\simeq \E/\I$ and $\Ll(G)\simeq \Ll/\J$. \\ \hline
$\lbrack \bullet, \bullet\rbrack$ & Lie bracket of a Lie algebra. \\ \hline
\end{longtable}

\subsection*{Graph theoretical notations} We finish by notations from graph theory:
\renewcommand{\arraystretch}{1.2}
\begin{longtable}{|p{0.35\textwidth}|p{0.6\textwidth}|}
\hline
\textbf{Symbol / Notation} & \textbf{Description} \\ \hline
$\Gamma \coloneq (X, E)$ & An undirected graph with set of vertices set $X$ and set of edges $E$. \\ \hline
$\Gamma_0$ & The graph with one vertex. \\ \hline
$\Gamma_1\simeq \Gamma_2$ & We say that we have a graph isomorphism between~$\Gamma_1$ and~$\Gamma_2$, if we have a bijection between the vertices of~$\Gamma_1$ and~$\Gamma_2$ which preserves the edges. \\ \hline
$d_\Gamma, r_\Gamma$ & Numbers of vertices and edges of $\Gamma$. \\ \hline
$\Gamma^f, \Gamma^c$ & The graphs $\Gamma^f$ and $\Gamma^c$ are the free and the complete graphs on~$d_\Gamma$ vertices. The graph~$\Gamma^f$ does not have edges and the graph~$\Gamma^c$ has~$\frac{d_\Gamma(d_\Gamma-1)}{2}$ edges.\\ \hline
$\nabla$ & Join of two graphs. \\ \hline
$\coprod$ & Either coproduct of pro-$p$ groups or disjoint union of graphs. \\ \hline
$c_n(\Gamma)$ & Number of $n$-cliques (maximal complete subgraphs with $n$ vertices), for a positive integer~$n$. \\ \hline
$\Gamma(t)\coloneq \sum_{n\in \NN}c_n(\Gamma)t^n$ & Clique polynomial. \\ \hline
$G_\Gamma$ & Pro-2 RAAG (Right Angled Artin Group) associated to the graph $\Gamma$. \\ \hline
$\E_\Gamma$, $\Ll_\Gamma$, $E_\Gamma$ & Quotients of (Lie, noncommutative) algebras $\E$, $\Ll$ and $E$ by the (Lie) ideal generated by $\{\lbrack X_u, X_v\rbrack\}_{\{u,v\}\in \bE}$. We denote the gradation of $\E_\Gamma$ by~$\{\E_{\Gamma,n}\}_{n\in \NN}$ and the gradation of $\Ll_{\Gamma}$ by $\{\Ll_{\Gamma,n}\}_{n\in \NN}$. \\ \hline
$\bz\in G_\Gamma^{d_\Gamma}$, and $\delta_\bz\colon \Delta \to {\rm Aut}(G_\Gamma)$ & A $d_\Gamma$-tuple $\bz \coloneq (z_1, \ldots, z_{d_\Gamma})$, and an action $\delta_{\bz}$ satisfying the equality~\eqref{conj}.
 \\ \hline
$G_{\bz} \coloneq G_\Gamma \rtimes_{\delta_\bz} \Delta$ & $\Delta$-RAAG defined by an underlying graph $\Gamma$ and the action $\delta_\bz$. These groups are presented by~\eqref{z-Pres}.\\ \hline
$\chi_0, \psi_1, \dots, \psi_{d_\Gamma}$ & $\chi_0$ and~$\psi_i$ are the characters in~$H^1(G_\bz)$ associated to~$x_0$ and~$x_i$, for~$1\leq i \leq d_\Gamma$.\\ \hline
$Gr(\PP)$ & The class of underlying graphs coming from~$\PP$. This is justified by~Theorem~\ref{Theo fonda}. Precisely the graph~$\Gamma$ lies in~$Gr(\PP)$ if and only if there exists a RPF field~$K$ such that $G_\Gamma=G_L$, with $L\coloneq K(\sqrt{-1})$. \\ \hline
\end{longtable}

\section{Introductory results on $\Delta$-RAAGs}
In this section, we study the Zassenhaus filtration on $\Delta$-RAAGs.

\subsection{Filtrations on $\Delta$-RAAGs}
Let $G_{\bz}$ be a~$\Delta$-RAAG. We start this section by studying the subgroup~$G_{\bz,2}=G_{\bz}^2\lbrack G_{\bz}, G_{\bz}\rbrack$ which is also known as the Frattini subgroup of~$G_{\bz}$.
\begin{comment}
\begin{defi}[$\Delta$-graphs and $\Delta$-RAAGs]
Let $\Gamma\coloneq (\bX,\bE)$ be an undirected graph.

We say that a group is $\Delta$-RAAG if we can write it as the group:
$$G_{\Gamma, \Delta}= G_\Gamma \rtimes_{\delta} \Delta.$$
The action $\delta$ of $\Delta$ on $G_\Gamma$ is defined by $\delta(x_0)x_i=x_i^{-1}$.
\end{defi}

Theorem~\ref{mainpyt} will show that maximal pro-$2$ quotients of absolute Galois groups of formally real finitely generated Pythagorean fields are $\Delta$-RAAGs. We conclude this section by studying some properties of $\Delta$-RAAGs.
First, let us compare lower central series of $\Delta$-RAAGs with their Zassenhaus filtration.
\end{comment}

\begin{lemm}\label{lemm low}
We have~$\lbrack G_{\bz}, G_{\bz}\rbrack =G_{\bz,2}=G_{\Gamma,2}=G_\Gamma^2$.
\end{lemm}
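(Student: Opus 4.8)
The plan is to prove the fourfold equality by reducing every term to either a commutator subgroup or a subgroup of squares, and then comparing these inside $G_\bz$. Throughout I use the elementary fact that for any pro-$2$ group $H$ the quotient $H/H^2$ has exponent $2$, hence is abelian, so that $[H,H]\subseteq H^2$ and consequently $H_2=H^2[H,H]=H^2$. Applying this to $H=G_\Gamma$ immediately gives $G_{\Gamma,2}=G_\Gamma^2$, which is the last equality in the chain.

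First I would compute the abelianization $G_\bz^{\mathrm{ab}}=G_\bz/[G_\bz,G_\bz]$ from the presentation~\eqref{z-Pres}. Abelianizing kills every commutator relation $[x_u,x_v]=1$, turns $x_0^2=1$ into $2\bar x_0=0$, and turns each relation $[x_0,x_i^{-1}]x_i^2[x_i,z_i]=1$ into $2\bar x_i=0$, since the two commutator factors vanish in the abelianization. Hence $G_\bz^{\mathrm{ab}}\simeq(\Z/2)^{d_\Gamma+1}$ is elementary abelian. In particular every square of $G_\bz$ lies in $[G_\bz,G_\bz]$, so $G_\bz^2\subseteq[G_\bz,G_\bz]$; combined with the general inclusion $[G_\bz,G_\bz]\subseteq G_\bz^2$ above, and with $G_{\bz,2}=G_\bz^2[G_\bz,G_\bz]$, this yields $[G_\bz,G_\bz]=G_\bz^2=G_{\bz,2}$.

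It remains to identify $G_{\bz,2}=[G_\bz,G_\bz]$ with $G_{\Gamma,2}$ as subgroups of $G_\bz$. For the inclusion $G_{\Gamma,2}\subseteq[G_\bz,G_\bz]$, note that $[G_\Gamma,G_\Gamma]\subseteq[G_\bz,G_\bz]$ is automatic and $G_\Gamma^2\subseteq G_\bz^2=[G_\bz,G_\bz]$ by the previous step, so $G_{\Gamma,2}=G_\Gamma^2[G_\Gamma,G_\Gamma]\subseteq[G_\bz,G_\bz]$. For the reverse inclusion I would use that $[G_\bz,G_\bz]$ is the normal closure in $G_\bz$ of the commutators of the generators $x_0,x_1,\dots,x_{d_\Gamma}$, and that $G_{\Gamma,2}$, being characteristic in the normal subgroup $G_\Gamma$, is normal in $G_\bz$. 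The commutators $[x_i,x_j]$ with $i,j\geq1$ lie in $[G_\Gamma,G_\Gamma]\subseteq G_{\Gamma,2}$, so only the mixed generators are delicate. Rewriting the defining relation as $[x_0,x_i^{-1}]=[z_i,x_i]\,x_i^{-2}$ exhibits $[x_0,x_i^{-1}]\in[G_\Gamma,G_\Gamma]\,G_\Gamma^2=G_{\Gamma,2}$, and the commutator identity $[x_0,x_i]=\bigl([x_0,x_i^{-1}]^{-1}\bigr)^{x_i}$ together with the normality of $G_{\Gamma,2}$ then places $[x_0,x_i]$ in $G_{\Gamma,2}$ as well. As all generating commutators lie in the normal subgroup $G_{\Gamma,2}$, their normal closure $[G_\bz,G_\bz]$ does too, giving $[G_\bz,G_\bz]\subseteq G_{\Gamma,2}$ and hence equality.

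The main obstacle is precisely this last inclusion: controlling the mixed commutators $[x_0,x_i^{\pm1}]$. Everything hinges on reading off from~\eqref{conj} and~\eqref{z-Pres} that $x_i^2$ differs from a commutator only by the factor $[z_i,x_i]\in[G_\Gamma,G_\Gamma]$, so that inversion and conjugation keep the relevant elements inside the characteristic subgroup $G_{\Gamma,2}$; the remaining bookkeeping (the commutator identity for $[x_0,x_i]$ and the normality of $G_{\Gamma,2}$ in $G_\bz$) is routine.
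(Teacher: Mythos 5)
Your proof is correct, and while it rests on the same algebraic core as the paper's --- reading off from the relation in \eqref{z-Pres} that $x_i^2$ agrees with the commutator $[x_i^{-1},x_0]$ up to the factor $[z_i,x_i]\in[G_\Gamma,G_\Gamma]$ (the paper packages this as the identity $[x_0,x_i^{n}]\equiv x_i^{2n}\pmod{[G_\Gamma,G_\Gamma]}$) --- it is organized differently in one substantive respect. The paper proves the inclusion $G_{\bz,2}\subseteq G_{\Gamma,2}$ by a topological approximation argument: an element $x\in G_{\bz,2}=G_\bz^2$ is written, modulo an arbitrary open subgroup $N$, as a product of squares of words in the generators, then rearranged modulo $[G_\bz,G_\bz]$ into a product of squares of generators, giving $x\in G_{\Gamma,2}N$ for every $N$, and one concludes since $G_{\Gamma,2}$ is open, hence closed. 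You bypass this entirely: computing the abelianization of $G_\bz$ from \eqref{z-Pres} shows it is elementary $2$-abelian, so every square lies in $[G_\bz,G_\bz]$, and $G_{\bz,2}=G_\bz^2[G_\bz,G_\bz]=[G_\bz,G_\bz]$ collapses in one stroke; the whole lemma then reduces to the purely commutator-theoretic equality $[G_\bz,G_\bz]=G_{\Gamma,2}$, which you settle via the normal closure of generator commutators. Your route also makes explicit two points the paper leaves implicit when it asserts that $[G_\bz,G_\bz]=G_{\Gamma,2}=G_\Gamma^2$ follows from its identities $(i)$ and $(ii)$: namely that $G_{\Gamma,2}$ is normal in $G_\bz$ (characteristic in the normal subgroup $G_\Gamma$), and that it suffices to control commutators of generators, with $[x_0,x_i]$ recovered from $[x_0,x_i^{-1}]$ by the identity $[x_0,x_i]=\bigl([x_0,x_i^{-1}]^{-1}\bigr)^{x_i}$. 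The trade-off is minor: the paper's identity $(ii)$ is stated for arbitrary powers $x_i^{n}$, but your abelianization shortcut is cleaner, avoids the limit bookkeeping over open subgroups, and isolates exactly where the defining relations of \eqref{z-Pres} enter.
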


\begin{proof}
Let us first notice that~$G_{\Gamma}$ is an open subgroup of~$G_{\bz}$. Thus~$G_{\Gamma,2}$ is an open subgroup of~$G_{\bz,2}$.
From now, we need the following equalities:
$$ (i)~\lbrack x,y\rbrack =x^{-2}(xy^{-1})^2y^2, \quad \text{and } (ii)~\lbrack x_0,x_i^n\rbrack =x_0x_i^{-n}x_0x_i^{n}\equiv x_i^{2n} \pmod{\lbrack G_\Gamma,G_{\Gamma}\rbrack},$$
where $n$ is an integer. 
The identity $(i)$ is well-known and shows that~$\lbrack G,G\rbrack \subset G^2$ for every group~$G$. From~$(i)$ and~$(ii)$, we observe that~$\lbrack G_{\bz}, G_{\bz}\rbrack= G_{\Gamma,2}=G_\Gamma^2$, and~$G_{\bz,2}=G_{\bz}^2$.

To conclude, let us show that~$G_{\bz, 2}\subset G_{\Gamma,2}$. Take $x\in G_{\bz,2}$ and~$N$ an open subgroup of~$G_\bz$. There exists elements~$u_1,\dots, u_k$, positive integers~$n_1,\dots, n_k$ and elements~$x_{j_l}$ in $\{x_0,\dots, x_d, x_1^{-1},\dots, x_d^{-1}\}$, such that:
$$xN = u_1^2\dots u_k^2N, \quad \text{for }u_i= x_{j_1}\dots x_{j_{n_i}}.$$
Consequently, $xN= \prod_i x_{j_1}^2\dots x_{j_{n_i}}^2 uN$ for some  $u\in \lbrack G_{\bz}, G_{\bz}\rbrack\subset G_{\Gamma,2}$. Let us observe that~$\prod_i x_{j_1}^2\dots x_{j_{n_i}}^2$ is in~$G_{\Gamma,2}$. Then for every open subgroup~$N$ of~$G_{\bz}$, we have~$x\in G_{\Gamma,2} N$.
 Since~$G_{\Gamma,2}$ is open in~$G_\bz$, we conclude that $x$ is an element in $G_{\Gamma,2}$.
\end{proof}

\begin{comment}
Let us show that~$G_{\bz, 2}\subset G_{\Gamma,2}$. Take $x\in G_{\bz,2}$. Up to an open subgroup, there exists elements~$u_1,\dots, u_k$, positive integers~$n_1,\dots, n_k$ and elements~$x_{j_l}$ in $\{x_0,\dots, x_d, x_1^{-1},\dots, x_d^{-1}\}$, such that:
$$x= u_1^2\dots u_k^2, \quad \text{for }u_i= x_{j_1}\dots x_{j_{n_i}}.$$
\\Consequently, $x= \prod_i x_{j_1}^2\dots x_{j_{n_i}}^2 u$ for some  $u\in \lbrack G_{\bz}, G_{\bz}\rbrack$. Thus $x$ is an element in $G_{\Gamma,2}$.

Now, we conclude by showing that $G_{\Gamma,2}\subset \lbrack G_{\bz}, G_{\bz}\rbrack$. Take $x$ in $G_{\Gamma,2}$. Then up to an open subgroup, we infer that~$x= x_{i_1}^2\dots x_{i_n}^2v$ for a positive integer~$n$ and some $v\in \lbrack G_{\Gamma},G_{\Gamma}\rbrack$. From~$(ii)$, we deduce that~$x$ is an element in $\lbrack G_{\bz},G_{\bz}\rbrack$.
\end{comment}

As a consequence of the three-subgroup lemma (\cite[§$0.3$]{DDMS}), we have the following result:

\begin{lemm}\label{three subgroup}
Let~$n$ be a positive integer. We have the following equality:
$$\lbrack G_\bz,G_{\Gamma,n}\rbrack G_{\Gamma,n+1}= \lbrack G_{\Gamma},G_{\Gamma,n}\rbrack G_{\Gamma,n+1}=G_{\Gamma,n+1}.$$
\end{lemm}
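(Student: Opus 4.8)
The plan is to collapse the whole statement onto the single \emph{key inclusion} $[G_\bz, G_{\Gamma,n}] \subseteq G_{\Gamma,n+1}$. The rightmost equality is nothing but the standard property of the Zassenhaus filtration that $[G_{\Gamma,1}, G_{\Gamma,n}] \subseteq G_{\Gamma,n+1}$, so that $[G_\Gamma, G_{\Gamma,n}]$ is absorbed by the factor $G_{\Gamma,n+1}$. Since $G_\Gamma \subseteq G_\bz$, we have $[G_\Gamma, G_{\Gamma,n}] \subseteq [G_\bz, G_{\Gamma,n}]$, giving one containment in the leftmost equality, while $G_{\Gamma,n+1} \subseteq [G_\bz, G_{\Gamma,n}]G_{\Gamma,n+1}$ is automatic. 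Hence once the key inclusion is established, all three terms collapse to $G_{\Gamma,n+1}$, and there is nothing more to prove.

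To prepare the reduction, I would first record that $G_{\Gamma,n}$ is normal in $G_\bz$: it is characteristic in $G_\Gamma$, and $G_\Gamma$ is normal in $G_\bz$ with $x_0$ acting through the automorphism $\delta_\bz(x_0)$, which preserves the characteristic filtration. So I may freely work modulo the normal subgroup $G_{\Gamma,n+1}$. Writing $G_\bz = G_\Gamma \cup x_0 G_\Gamma$ and using the identity $[gg',h] = [g,h]^{g'}[g',h]$, any commutator $[g,h]$ with $h \in G_{\Gamma,n}$ becomes, modulo $G_{\Gamma,n+1}$, a product of a $G_\Gamma$-conjugate of $[x_0,h]$ and a factor $[g',h]$ with $g' \in G_\Gamma$. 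The latter already lies in $G_{\Gamma,n+1}$ by the Zassenhaus property, so the whole problem reduces to showing $[x_0,h] \in G_{\Gamma,n+1}$ for every $h \in G_{\Gamma,n}$.

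Since $x_0^2 = 1$, conjugation by $x_0$ is exactly $\delta_\bz(x_0)$, whence $[x_0,h] = \delta_\bz(x_0)(h)^{-1}h$, and the claim is equivalent to saying that $\delta_\bz(x_0)$ acts trivially on each graded piece $G_{\Gamma,n}/G_{\Gamma,n+1}$, i.e. that the induced automorphism of $\Ll(G_\Gamma) \simeq \Ll_\Gamma$ is the identity. The point is that $\Ll(G_\Gamma)$ is generated as a restricted Lie algebra by its degree-one part, the images of $x_1,\dots,x_{d_\Gamma}$, so it suffices to check that $\delta_\bz(x_0)$ fixes each $x_i$ modulo $G_{\Gamma,2}$. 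This is immediate from~\eqref{conj}: $\delta_\bz(x_0)(x_i) = [z_i,x_i]x_i^{-1}$, where $[z_i,x_i] \in [G_\Gamma,G_\Gamma] \subseteq G_{\Gamma,2}$, while $x_i^{-1} \equiv x_i \pmod{G_{\Gamma,2}}$ because the coefficients are in $\F_2$. Thus $\delta_\bz(x_0)$ is a graded restricted-Lie automorphism fixing a degree-one generating set, hence the identity; this gives $\delta_\bz(x_0)(h) \equiv h \pmod{G_{\Gamma,n+1}}$ and therefore $[x_0,h] \in G_{\Gamma,n+1}$.

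The main obstacle is precisely this last step: controlling $\delta_\bz(x_0)$ on \emph{all} higher graded pieces simultaneously, not merely in degree one. I expect two rigorous routes. One is the structural argument above (an automorphism of a degree-one-generated restricted Lie algebra fixing its generators is the identity). The other is an induction on $n$, expanding $[x_0,-]$ over a generating set of $G_{\Gamma,n}$ modulo $G_{\Gamma,n+1}$ coming from brackets $[G_{\Gamma,a},G_{\Gamma,b}]$ with $a+b=n$ and squares $G_{\Gamma,c}^2$ with $2c=n$; here the three-subgroup (Hall--Witt) lemma is exactly what pushes $[x_0,[u,v]]$ into $G_{\Gamma,n+1}$ using the inductive facts $[x_0,u] \in G_{\Gamma,a+1}$ and $[x_0,v] \in G_{\Gamma,b+1}$, while the square case follows from $[x_0,v^2] = [x_0,v]^2\,[[x_0,v],v]$. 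Either route hinges on the characteristic-$2$ collapse $x_i^{-1} \equiv x_i$ and on $[z_i,x_i]$ having strictly higher degree, which is what renders the inverting action invisible on the associated graded.
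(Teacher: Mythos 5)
Your proposal is correct, and its primary route is genuinely different from the paper's proof. Both arguments funnel into the same key inclusion $[x_0,h]\in G_{\Gamma,n+1}$ for $h\in G_{\Gamma,n}$ (your reduction via $[gg',h]=[g,h]^{g'}[g',h]$ and normality of $G_{\Gamma,n+1}$ is sound), but the paper establishes that inclusion by an induction along the \emph{lower central series}: the three-subgroup lemma gives $[x_0,\gamma_n(G_\Gamma)]\leq G_{\Gamma,n+1}$, and Lazard's formula, expressing $G_{\Gamma,n}$ in terms of the subgroups $\gamma_i(G_\Gamma)^{2^j}$, then transfers this to the Zassenhaus filtration. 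You instead recast the inclusion as the assertion that $\delta_\bz(x_0)$ induces the identity on the associated graded restricted Lie algebra $\Ll(G_\Gamma)$, and prove it by a fixed-point principle: a graded restricted-Lie automorphism fixing a degree-one generating set is the identity, and the degree-one verification is exactly the characteristic-$2$ collapse $x_i^{-1}\equiv x_i \pmod{G_{\Gamma,2}}$ together with $[z_i,x_i]\in G_{\Gamma,2}$ coming from~\eqref{conj}. What your route buys is conceptual clarity: it names the structural reason the inverting action is invisible on the graded pieces, and it dispenses with the lower central series and the three-subgroup lemma altogether. What it costs is that degree-one generation of $\Ll(G_\Gamma)$ must be imported, either from the identification $\Ll(G_\Gamma)\simeq\Ll_\Gamma$ (an external RAAG result which the paper quotes only afterwards, in Proposition~\ref{lower RAAGs}, so there is no circularity) or from Lazard's formula, which holds for any pro-$2$ group; thus the same Lazard-type input the paper uses explicitly is hidden inside that appeal. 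Your fallback route (induction on $n$ over brackets $[G_{\Gamma,a},G_{\Gamma,b}]$ and squares, with Hall--Witt pushing $[x_0,[u,v]]$ one degree deeper) is essentially the paper's own argument, run directly on the Zassenhaus filtration rather than on $\{\gamma_n(G_\Gamma)\}$.
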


\begin{proof}
Let~$G$ be a pro-$2$ group. We introduce $\{\gamma_n(G)\}_{n\in \NN}$ the lower central series of~$G$. It is defined by $\gamma_1(G)\coloneq G$ and~$\gamma_n(G)\coloneq \lbrack G, \gamma_{n-1}(G)\rbrack$, for every positive integer~$n\geq 2$.

From an inductive argument and the three-subgroup lemma (\cite[§$0.3$]{DDMS}), we observe that for every positive integer $n$, we have $\lbrack x_0,\gamma_n(G_\Gamma)\rbrack \leq G_{\Gamma,n+1}$.  Thus using Lazard's formula \cite[Theorem~$11.2$]{DDMS}, we infer $\lbrack x_0, G_{\Gamma,n}\rbrack \subset G_{\Gamma,n+1}$. Consequently 
$$\lbrack G_\bz ,G_{\Gamma,n}\rbrack G_{\Gamma,n+1}= \lbrack G_{\Gamma},G_{\Gamma,n}\rbrack G_{\Gamma,n+1}=G_{\Gamma,n+1}.$$
\end{proof}

Let us now compute the Zassenhaus filtrations of $\Delta$-RAAGs.
\begin{prop}\label{lower RAAGs}
For every integer~$n\geq 2$, we have: $G_{\Gamma,n}=G_{\bz,n}.$

As a consequence, we infer that for every positive integer~$n\geq 2$:
$$\Ll_n(G_\bz)\coloneq G_{\bz,n}/G_{\bz,n+1}\simeq \Ll_{\Gamma,n}.$$
\end{prop}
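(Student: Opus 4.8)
The plan is to prove the two inclusions $G_{\Gamma,n}\subseteq G_{\bz,n}$ and $G_{\bz,n}\subseteq G_{\Gamma,n}$ for every $n\geq 2$ by passing through the lower central series and invoking Lazard's formula $G_{\bz,n}=\prod_{i2^{j}\geq n}\gamma_i(G_\bz)^{2^{j}}$ (and the analogous one for $G_\Gamma$), \cite[Theorem~11.2]{DDMS}. The inclusion $G_{\Gamma,n}\subseteq G_{\bz,n}$ is immediate: since $G_\Gamma\subseteq G_\bz$ we have $\gamma_i(G_\Gamma)\subseteq \gamma_i(G_\bz)$ for all $i$, so term by term the Lazard product for $G_\Gamma$ is contained in the one for $G_\bz$.

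The substance is the reverse inclusion. First I would establish the key claim that $\gamma_i(G_\bz)\subseteq G_{\Gamma,i}$ for every $i\geq 2$, by induction on $i$: the base case $\gamma_2(G_\bz)=\lbrack G_\bz,G_\bz\rbrack=G_{\Gamma,2}$ is Lemma~\ref{lemm low}, and if $\gamma_i(G_\bz)\subseteq G_{\Gamma,i}$ then $\gamma_{i+1}(G_\bz)=\lbrack G_\bz,\gamma_i(G_\bz)\rbrack\subseteq \lbrack G_\bz,G_{\Gamma,i}\rbrack\subseteq G_{\Gamma,i+1}$ by Lemma~\ref{three subgroup}. Here one uses that each $G_{\Gamma,i}$ is normal in $G_\bz$, being characteristic in the normal subgroup $G_\Gamma$; this is what makes the normal closures defining the power subgroups behave well in what follows.

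With the claim in hand, I apply Lazard's formula to $G_\bz$ and bound each factor inside $G_{\Gamma,n}$. For a factor with $i\geq 2$ and $i2^{j}\geq n$, I use $\gamma_i(G_\bz)^{2^{j}}\subseteq G_{\Gamma,i}^{2^{j}}\subseteq G_{\Gamma,i2^{j}}\subseteq G_{\Gamma,n}$, where the middle inclusion is the restricted-filtration property $G_{\Gamma,m}^{2^{j}}\subseteq G_{\Gamma,m2^{j}}$, following from $(g-1)^{2^{j}}\in E_{m2^{j}}(G_\Gamma)$ over $\F_2$ for $g\in G_{\Gamma,m}$. For the factors with $i=1$, necessarily $2^{j}\geq n\geq 2$ so $j\geq 1$; here I rewrite $g^{2^{j}}=(g^{2})^{2^{j-1}}$ and use $g^{2}\in G_\bz^{2}=G_{\Gamma,2}$ (Lemma~\ref{lemm low}) to get $G_\bz^{2^{j}}\subseteq G_{\Gamma,2}^{2^{j-1}}\subseteq G_{\Gamma,2^{j}}\subseteq G_{\Gamma,n}$. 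Taking the product over all factors yields $G_{\bz,n}\subseteq G_{\Gamma,n}$, and combined with the first inclusion this gives equality.

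The main obstacle is the correct treatment of the $2$-power part of the Zassenhaus filtration, and especially the $i=1$ factors $G_\bz^{2^{j}}$, which a priori only sit in $G_{\bz,2^{j}}$; the trick of extracting one square to land inside $G_\Gamma$ via Lemma~\ref{lemm low} is precisely what prevents the argument from being circular. The final statement is then immediate: for $n\geq 2$ both $G_{\bz,n}=G_{\Gamma,n}$ and $G_{\bz,n+1}=G_{\Gamma,n+1}$, whence $\Ll_n(G_\bz)=G_{\bz,n}/G_{\bz,n+1}=G_{\Gamma,n}/G_{\Gamma,n+1}=\Ll_n(G_\Gamma)\simeq \Ll_{\Gamma,n}$.
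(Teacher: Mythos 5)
Your proof is correct, and it takes a genuinely different route from the paper's, even though both rest on the same two lemmas. The paper proves $G_{\bz,n}=G_{\Gamma,n}$ by strong induction on $n$ directly at the level of the Zassenhaus filtration, using the recursive description $G_{n+1}=G_{\lceil (n+1)/2\rceil}^2\prod_{i+j=n+1}\lbrack G_i,G_j\rbrack$ of \cite[Theorem~$12.9$]{DDMS}: modulo $G_{\Gamma,n+1}$, each factor for $G_\bz$ is replaced by the corresponding factor for $G_\Gamma$, via the induction hypothesis for the indices $\geq 2$ and via Lemma~\ref{three subgroup} for the commutators involving $G_{\bz,1}=G_\bz$. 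You instead detour through the lower central series: you first prove the intermediate claim $\gamma_i(G_\bz)\subseteq G_{\Gamma,i}$ for $i\geq 2$ (induction from Lemmas~\ref{lemm low} and~\ref{three subgroup}), then feed it into Lazard's closed formula $G_{\bz,n}=\prod_{i2^{j}\geq n}\gamma_i(G_\bz)^{2^{j}}$ of \cite[Theorem~$11.2$]{DDMS}, handling the pure $2$-power factors with $i=1$ by extracting one square through $G_\bz^{2}\subseteq G_{\bz,2}=G_{\Gamma,2}$ and then applying $G_{\Gamma,m}^{2}\subseteq G_{\Gamma,2m}$. Your approach is consonant with the paper's toolkit (the paper's own proof of Lemma~\ref{three subgroup} performs the same passage from $\gamma_n$ to the Zassenhaus filtration via \cite[Theorem~$11.2$]{DDMS}), but the mechanisms differ: the paper's induction is shorter and never leaves the Zassenhaus filtration, whereas yours isolates a reusable statement about the lower central series of a $\Delta$-RAAG and makes explicit exactly where the $2$-power terms --- the only place where $x_0$ could a priori contribute in degrees $\geq 2$ --- get absorbed into $G_\Gamma$; in the paper this point stays hidden in the term $G_{\bz,\lceil (n+1)/2\rceil}^{2}$ and the strong induction. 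Two cosmetic remarks: the products in Lazard's formula should be read as closed products, which is harmless since $G_{\Gamma,n}$ is closed; and the final identification $\Ll_n(G_\Gamma)\simeq\Ll_{\Gamma,n}$, which you assert, is the nontrivial RAAG input for which the paper cites \cite[Theorem~$2.4$]{Tra}.
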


\begin{proof}
The proof is done by induction. Lemma~\ref{lemm low} gives us~$G_{\bz,2}=G_{\Gamma,2}$.

Assume $n\geq 2$ and let us state our induction hypothesis: $G_{\Gamma,n}=G_{\bz,n}$. Then from \cite[Theorem~$12.9$]{DDMS}, the induction hypothesis, and Lemma~\ref{three subgroup}, we infer:
\begin{equation*}
\begin{aligned}
G_{\bz,n+1} G_{\Gamma,n+1}&= G_{\bz, \lceil \frac{n+1}{2} \rceil}^2 \prod_{i+j=n+1}\lbrack G_{\bz,i},G_{\bz,j}\rbrack G_{\Gamma,n+1}
\\  &= G_{\Gamma, \lceil \frac{n+1}{2} \rceil}^2 \prod_{i+j=n+1}\lbrack G_{\Gamma,i}, G_{\Gamma,j}\rbrack G_{\Gamma,n+1}
\\   &=G_{\Gamma,n+1}.
\end{aligned}
\end{equation*}
Therefore $G_{\bz,n+1}\leq G_{\Gamma,n+1}$. 
Furthermore, the group~$G_\Gamma$ is a subgroup of~$G_{\bz}$, so for every positive integer~$n$, we have $G_{\Gamma,n}\leq G_{\bz,n}$. Thus, for every integer~$n\geq 2$,
$G_{\bz,n}=G_{\Gamma,n}$.

From \cite[Theorem~$2.4$]{Tra}, we infer~$\Ll(G_\Gamma)\simeq \Ll_\Gamma$. This allows us to conclude. 
\end{proof}

Let us recall the following result:

\begin{lemm}\label{Graph identification}
Let~$\Gamma_1$ and~$\Gamma_2$ be two graphs. The following assertions are equivalent:

$(i)$ $G_{\Gamma_1}\simeq G_{\Gamma_2}$.

$(ii)$ $\E_{\Gamma_1}\simeq \E_{\Gamma_2}$.

$(iii)$ $H^\bullet(\Gamma_1)\simeq H^\bullet(\Gamma_2)$.

$(iv)$ $\Gamma_1\simeq\Gamma_2$.
\end{lemm}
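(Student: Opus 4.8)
The plan is to prove the cycle of implications $(iv)\Rightarrow(i)\Rightarrow(ii)\Rightarrow(iii)\Rightarrow(iv)$. The implication $(iv)\Rightarrow(i)$ is immediate from the pro-$2$ presentation of a RAAG: a graph isomorphism $\Gamma_1\simeq\Gamma_2$ is a bijection of the vertex sets carrying edges to edges, so sending each canonical generator $x_i$ of $G_{\Gamma_1}$ to the corresponding generator of $G_{\Gamma_2}$ maps the defining relations $\{\lbrack x_i,x_j\rbrack=1\}_{\{i,j\}\in\bE_1}$ bijectively onto those of $G_{\Gamma_2}$, hence extends to an isomorphism $G_{\Gamma_1}\simeq G_{\Gamma_2}$. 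For $(i)\Rightarrow(ii)$ I would use that $\E(-)$ is a functor on pro-$2$ groups: an isomorphism $G_{\Gamma_1}\simeq G_{\Gamma_2}$ extends to an isomorphism of completed $\F_2$-group algebras taking augmentation ideal to augmentation ideal, hence preserving the filtration by its powers, so that $\E(G_{\Gamma_1})\simeq\E(G_{\Gamma_2})$; combined with the identifications $\E(G_{\Gamma_i})\simeq\E_{\Gamma_i}$ recalled above (via the Magnus isomorphism and \cite[Theorem~$2.4$]{Tra}), this yields $(ii)$.

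For $(ii)\Rightarrow(iii)$ the key input is that each $\E_\Gamma$ is a quadratic algebra whose quadratic (Koszul) dual is the cohomology ring, $H^\bullet(\Gamma)\simeq\E_\Gamma^{!}$, namely the exterior face ring $\Lambda(\chi_1,\dots,\chi_{d_\Gamma})/(\chi_i\chi_j:\{i,j\}\notin\bE)$ of the clique complex of $\Gamma$; this is the Koszulity of RAAGs. Since quadratic duality is functorial and carries isomorphisms of quadratic algebras to isomorphisms, an isomorphism $\E_{\Gamma_1}\simeq\E_{\Gamma_2}$ dualizes to $H^\bullet(\Gamma_1)\simeq H^\bullet(\Gamma_2)$.

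The remaining implication $(iii)\Rightarrow(iv)$ carries the real content and is the step I expect to be the main obstacle. From the ring one reads off $d_{\Gamma}=\dim_{\F_2}H^1(\Gamma)$ and, writing $V\coloneq H^1(\Gamma)$, the subspace $K\coloneq\ker\big(\Lambda^2 V\to H^2(\Gamma)\big)$, which is spanned by the non-edge bivectors $\{\chi_i\wedge\chi_j:\{i,j\}\notin\bE\}$. The difficulty is that an abstract graded-ring isomorphism need not respect the distinguished generators $\chi_i$, so adjacency cannot simply be read off a basis; one must show that the pair $(V,K)$, equivalently the entire exterior face ring, determines $\Gamma$ up to isomorphism. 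I would resolve this by invoking the reconstruction of the flag (clique) complex from its exterior Stanley–Reisner ring: such a face ring determines its simplicial complex up to isomorphism, and the $1$-skeleton of the clique complex of $\Gamma$ is exactly $\Gamma$, whence $\Gamma_1\simeq\Gamma_2$. This closes the cycle. Alternatively, in the pro-$2$ setting one can short-circuit this step by appealing to the rigidity of pro-$2$ RAAGs, for which the defining graph is a complete isomorphism invariant \cite[Theorem~$1.2$]{snopce2022right}, giving $(i)\Rightarrow(iv)$ directly and making the equivalence $(i)\Leftrightarrow(iv)$ explicit.
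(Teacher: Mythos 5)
Your proof is correct and takes essentially the same route as the paper: the same cycle $(iv)\Rightarrow(i)\Rightarrow(ii)\Rightarrow(iii)\Rightarrow(iv)$, using the identification $\E(G_\Gamma)\simeq \E_\Gamma$ for $(i)\Rightarrow(ii)$, Koszulity of RAAGs plus quadratic duality for $(ii)\Rightarrow(iii)$, and reconstruction of the graph from its cohomology (exterior face) ring for $(iii)\Rightarrow(iv)$ --- a step you sketch via Stanley--Reisner reconstruction and the paper handles by citing \cite[Corollary $2.14$]{Tra} and \cite{kim}. One minor caveat: your alternative ``short-circuit'' attributes graph-rigidity of pro-$2$ RAAGs to \cite[Theorem $1.2$]{snopce2022right}, but that theorem concerns which RAAGs occur as maximal pro-$p$ quotients of absolute Galois groups, not rigidity, so that side remark rests on a misattributed citation (though it does not affect your main argument).
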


\begin{proof}
Clearly, $(iv)$ implies $(i)$.

The first author showed in \cite[Proposition~$3.4$] {hamza2023extensions} that~$\E(G_\Gamma)\simeq \E_\Gamma$. Thus $(i)$ implies~$(ii)$.

Since~$\E_\Gamma\simeq \E(G_\Gamma)$, the assertion $(ii)$ gives us~$\E(G_{\Gamma_1})\simeq \E(G_{\Gamma_2})$. These two algebras are Koszul. We refer to~\cite[Theorem~$1.2$]{bartholdi2020right}. Therefore from~\cite[Proposition~$3.4$]{hamza2023extensions}, we infer~$H^\bullet(G_{\Gamma_1})\simeq H^\bullet(G_{\Gamma_2})$. So $(ii)$ implies $(iii)$.

We conclude $(iii)$ implies $(iv)$ using \cite[Corollary $2.14$]{Tra}. We also refer to~\cite{kim}.
\end{proof}

As a consequence:

\begin{coro}\label{graph iso}
Let us consider two graphs $\Gamma_1$ and $\Gamma_2$, and two vectors $\bz_1\in \G_{\Gamma_1}^{d_{\Gamma_1}}$ and~$\bz_2\in G_{\Gamma_2}^{d_{\Gamma_2}}$. If we have an isomorphism of $\Delta$-RAAGs, $G_{\bz_1}\simeq G_{\bz_2}$, then $\Gamma_1\simeq \Gamma_2$.
\end{coro}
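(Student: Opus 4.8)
The plan is to reduce everything to Lemma~\ref{Graph identification}: it suffices to produce, out of an isomorphism $\phi\colon G_{\bz_1}\xrightarrow{\sim}G_{\bz_2}$, an isomorphism of the underlying RAAGs, of their graded (restricted) Lie algebras, or of their enveloping algebras; then $\Gamma_1\simeq\Gamma_2$ follows from the equivalences $(i)$--$(iv)$. The functorial input is that an isomorphism of pro-$2$ groups respects the Zassenhaus filtration, hence induces an isomorphism of graded restricted Lie algebras $\mathrm{gr}(\phi)\colon \Ll(G_{\bz_1})\xrightarrow{\sim}\Ll(G_{\bz_2})$ preserving the grading degree by degree (and likewise $\E(G_{\bz_1})\simeq\E(G_{\bz_2})$).

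First I would read off two pieces of data. In degree $1$ one gets $\dim_{\F_2}\Ll_1(G_{\bz_i})=d_{\Gamma_i}+1$, so $d_{\Gamma_1}=d_{\Gamma_2}$; call this common value $d$. In degrees $n\geq 2$, Proposition~\ref{lower RAAGs} identifies $\Ll_n(G_{\bz_i})$ with $\Ll_{\Gamma_i,n}$ compatibly with the bracket (since $G_{\bz_i,n}=G_{\Gamma_i,n}$ as subgroups), so $\mathrm{gr}(\phi)$ restricts to an isomorphism of graded Lie algebras $\bigoplus_{n\geq 2}\Ll_{\Gamma_1,n}\simeq\bigoplus_{n\geq 2}\Ll_{\Gamma_2,n}$. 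The point is now to promote this to an isomorphism of the \emph{full} RAAG Lie algebras $\Ll_{\Gamma_1}\simeq\Ll_{\Gamma_2}$, i.e. to recover the missing degree-$1$ piece, namely the image $W_i$ of $G_{\Gamma_i}$ in $\Ll_1(G_{\bz_i})$. Indeed, combining Lemma~\ref{lemm low} with Proposition~\ref{lower RAAGs} one has $\Ll_{\Gamma_i}=W_i\oplus\bigoplus_{n\geq 2}\Ll_{\Gamma_i,n}$ as a graded restricted subalgebra of $\Ll(G_{\bz_i})$; so if I can show $\mathrm{gr}(\phi)(W_1)=W_2$, then $\mathrm{gr}(\phi)$ carries $\Ll_{\Gamma_1}$ onto $\Ll_{\Gamma_2}$, passing to enveloping algebras gives $\E_{\Gamma_1}\simeq\E_{\Gamma_2}$, and Lemma~\ref{Graph identification} ($(ii)\Rightarrow(iv)$) concludes.

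The hard part is exactly this canonical recovery of $W_i$ (equivalently, of the torsion-free RAAG subgroup $G_{\Gamma_i}=\ker\chi_0$), because $W_i$ is one of many hyperplanes of $\Ll_1(G_{\bz_i})$ and is not singled out by the graded bracket alone; the extra generator $x_0$ and the twisting by $\bz_i$ are what make it delicate. My plan to pin it down is to use torsion: the subgroup $G_{\Gamma_i}$ is torsion-free, while every involution of $G_{\bz_i}$ has the form $g x_0$ with $g\in G_{\Gamma_i}$, so the images in $\Ll_1(G_{\bz_i})$ of all involutions lie in the affine hyperplane $\bar x_0+W_i$. Since $\phi$ maps involutions to involutions, $\mathrm{gr}(\phi)$ maps this set to its analogue in $\Ll_1(G_{\bz_2})$; provided these images affinely span $\bar x_0+W_i$ — which I would check by exhibiting, for each generator direction, an involution of class $\bar x_0+\bar x_j$ — one recovers $\bar x_0+W_i$, hence $W_i$ as the span of differences, in an automorphism-invariant way, giving $\mathrm{gr}(\phi)(W_1)=W_2$. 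I expect the genuine obstacle to be precisely this spanning claim for an arbitrary admissible $\bz$: the conjugates of $x_0$ only reproduce the single class $\bar x_0$, so one must produce enough \emph{non-conjugate} involutions, and controlling them amounts to understanding the solutions of $\delta_{\bz}(g)=g^{-1}$ in $G_{\Gamma}$ modulo the Frattini subgroup.
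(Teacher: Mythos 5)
Your reduction is sound, and its first half coincides with the paper's own argument: from $\mathrm{gr}(\phi)$ you get $d_{\Gamma_1}=d_{\Gamma_2}$, and Proposition~\ref{lower RAAGs} identifies everything in degrees $n\geq 2$; you are also right that if in addition $\mathrm{gr}(\phi)(W_1)=W_2$, then $\mathrm{gr}(\phi)$ restricts to an isomorphism of the graded restricted subalgebras $W_i\oplus\bigoplus_{n\geq 2}\Ll_{\Gamma_i,n}\simeq\Ll(G_{\Gamma_i})$ (here Lemma~\ref{lemm low}, i.e.\ $G_{\bz_i,2}=G_{\Gamma_i,2}$, is what makes $W_i$ a hyperplane of $\Ll_1(G_{\bz_i})$), whence $\E_{\Gamma_1}\simeq\E_{\Gamma_2}$ and Lemma~\ref{Graph identification} concludes. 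The genuine gap is the step you yourself flag and never prove: that the degree-one classes of the involutions of $G_{\bz}$ affinely span $\bar{x}_0+W$. For an arbitrary admissible $\bz$ none of the witnesses you suggest exist. Writing $\delta:=\delta_{\bz}(x_0)$, the element $x_jx_0$ is an involution if and only if $(x_jx_0)^2=x_j\delta(x_j)=\lbrack x_j^{-1},z_j\rbrack=1$, i.e.\ if and only if $z_j$ centralizes $x_j$; conjugates of $x_0$ all have class $\bar{x}_0$; and the solution of $\delta(g)=g^{-1}$ that \eqref{conj} hands you for free, namely $g=x_j\,x_j^{z_j}$ (one checks $\delta(x_j^{z_j})=x_j^{-1}$, using that $z_j\delta(z_j)$ centralizes $x_j$, which follows from $\delta^2=\mathrm{id}$), has trivial image in $\Ll_1$ since $\bar{g}=2\bar{x}_j=0$, so the involution $gx_0$ again only yields the class $\bar{x}_0$.

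What the spanning claim actually requires is a statement of nonabelian-cohomology type that your proposal does not address: the coset $C(x_j)z_j$, where $C(x_j)$ is the centralizer of $x_j$ in $G_\Gamma$ (this coset is the only data about $z_j$ that $\delta$ remembers), must contain a representative $z$ with $z\delta(z)=1$. If it does, then $(x_j^{m}z)\,\delta(x_j^{m}z)=z\delta(z)=1$ for every $m$, so $zx_0$ and $x_jzx_0$ are involutions with classes $\bar{x}_0+\bar{z}$ and $\bar{x}_0+\bar{x}_j+\bar{z}$, their difference is $\bar{x}_j$, and spanning follows; if it does not, your method produces nothing in the direction $\bar{x}_j$. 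Equivalently, one must show that $\bigl(z_j\delta(z_j)\bigr)^{-1}$ is a twisted norm $v\cdot z_j\delta(v)z_j^{-1}$ for some $v\in C(x_j)$, a vanishing statement where the full strength of the hypothesis that $\delta_\bz$ is a well-defined order-two action would have to be exploited; it is true for $\bz_0$ and for the groups built by the paper's structure theory, but nothing in your text establishes it in general. Note also that you cannot outsource this point to the paper: its proof never transports $G_{\Gamma_1}$ or $W_1$ and performs no involution analysis at all; after the generator count and Proposition~\ref{lower RAAGs} it passes directly to $\E_{\Gamma_1}\simeq\E(G_{\Gamma_1})\simeq\E(G_{\Gamma_2})\simeq\E_{\Gamma_2}$ and invokes Lemma~\ref{Graph identification}. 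So your route is a genuine departure exactly at the point where it is left unfinished, and as written the proof is conditional on the unproven spanning claim.
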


\begin{proof}
Since $G_{\bz_1}\simeq G_{\bz_2}$, we observe that $d_{\Gamma_1}=d_{\Gamma_2}$. In particular, we infer that:
$$G_{\Gamma_1}/G_{\Gamma_1,2}\simeq G_{\Gamma_2}/G_{\Gamma_2,2}.$$
Moreover, from Proposition \ref{lower RAAGs}, we infer for every integer $n\geq 2$, that $G_{\Gamma_1,n}=G_{\Gamma_2,n}$. Thus $\E_{\Gamma_1}\simeq \E(G_{\Gamma_1})\simeq \E(G_{\Gamma_2})\simeq \E_{\Gamma_2}$. We conclude using~Lemma~\ref{Graph identification}.
\end{proof}

\begin{comment}
\begin{rema}
This result was also proved by Min{\'a}{\v c}-Rogelstad-Tân when $\Gamma$ is either a free graph (i.e. without edges) in \cite[Corollary $4.8$]{Minac} or a complete graph in \cite[Lemma $4.12$]{Minac}.
\end{rema}

    G_{\Gamma,n}&= G_{\Gamma, \lceil n/2 \rceil}^2 \prod_{i+j=n}[G_{\Gamma,i};G_{\Gamma,j}]
\\  &= \gamma_{\lceil n/2 \rceil}(G_{\Gamma,\Delta})^2 \prod_{i+j=n}[\gamma_i(G_{\Gamma,\Delta});\gamma_j(G_{\Gamma,\Delta})]
\\   &=\gamma_n(G_{\Gamma,\Delta}).
Similarly, by induction using \cite[Theorem $12.9$]{DDMS}, we infer $$G_{\Gamma,\Delta,n}=G_{\Gamma,n}.$$

\begin{rema}
Following our notations, we infer the following isomorphisms of Lie algebras: 
$$\bigoplus_n \gamma_n(G_{\Gamma,\Delta})/\gamma_{n+1}(G_{\Gamma,\Delta})\simeq \bigoplus_n G_{\Gamma,\Delta,n}/G_{\Gamma,\Delta,n+1}.$$
\end{rema}
\end{comment}

\begin{comment}
\begin{rema}
Indeed the proof of Proposition \ref{lower RAAGs}, gives us the following result. Let $G$ be a pro-$2$ group and $H$ be a closed subgroup of $G$ such that $[G,G]=H^2$, then for every $n$ larger than $2$, we have $G_n=H_n$. Following the notations from \cite[Definition $4.0.2$]{leoni2024zassenhaus} the group $H$ is strictly embedded in $G$.
\end{rema}
\end{comment}

\subsection{Third Zassenhaus quotient of $\Delta$-RAAGs}
%We follow the notations from 
In this section, we study the group~$\mathcal{G}_{\bz}\coloneq G_{\bz}/G_{\bz,3}$. When~$G$ is a pro-$2$ group, we denote by~$G^4$ the normal closure of the subgroup of~$G$ generated by the~$4$-th powers. Observe that we have the equality~$G_3=  G^4\lbrack G^2, G\rbrack$. Let us recall that the third Zassenhaus quotient~$G/G_3$ plays a fundamental role in the study of the maximal pro-$2$ quotients of absolute Galois groups of RPF fields. In that context, it is called the Witt group of~$G$. We refer to \cite{minac1990formally} and \cite{minac1996witt} for further details. 

\begin{lemm}
Let $G_\bz$ be a $\Delta$-RAAG. For every integer $n\geq 1$, we have $\delta_\bz(x_0)(G_{\Gamma,n})\subset G_{\Gamma,n}$. Consequently $\delta_\bz$ induces an action $\overline{\delta_\bz}$ of $\Delta$ on $G_\Gamma/G_{\Gamma,3}$.
\end{lemm}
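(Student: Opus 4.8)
The plan is to exploit the fact that the Zassenhaus filtration $\{G_{\Gamma,n}\}_n$ is \emph{characteristic}, that is, stable under every continuous automorphism of $G_\Gamma$. By the definition of a $\Delta$-RAAG recalled in~\eqref{conj}, the map $\delta_\bz(x_0)$ is a well-defined element of ${\rm Aut}(G_\Gamma)$, so the first assertion will follow at once from this general stability applied to $\phi=\delta_\bz(x_0)$.

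To establish that each $G_{\Gamma,n}$ is characteristic, I would argue by induction on $n$ using the recursive description of the Zassenhaus filtration from \cite[Theorem~$12.9$]{DDMS}, namely
$$G_{\Gamma,n}=G_{\Gamma,\lceil n/2\rceil}^2\prod_{i+j=n}\lbrack G_{\Gamma,i},G_{\Gamma,j}\rbrack,\qquad G_{\Gamma,1}=G_\Gamma.$$
The base case is clear since $G_{\Gamma,1}=G_\Gamma$ is preserved by every automorphism. For the inductive step, any automorphism $\phi$ of $G_\Gamma$ commutes with the normal-closure operations $H\mapsto H^2$ and $(H_1,H_2)\mapsto \lbrack H_1,H_2\rbrack$, in the sense that $\phi(H^2)=\phi(H)^2$ and $\phi(\lbrack H_1,H_2\rbrack)=\lbrack \phi(H_1),\phi(H_2)\rbrack$; applying $\phi$ to the displayed formula and using the induction hypothesis $\phi(G_{\Gamma,k})=G_{\Gamma,k}$ for every $k<n$ yields $\phi(G_{\Gamma,n})=G_{\Gamma,n}$. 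Taking $\phi=\delta_\bz(x_0)$ gives $\delta_\bz(x_0)(G_{\Gamma,n})\subset G_{\Gamma,n}$ for all $n\geq 1$. (Alternatively one may invoke Jennings--Lazard's product formula $G_{\Gamma,n}=\prod_{i\cdot 2^j\geq n}\gamma_i(G_\Gamma)^{2^j}$, whose terms are visibly characteristic.)

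For the final statement, I would specialise to $n=3$. The stability $\delta_\bz(x_0)(G_{\Gamma,3})=G_{\Gamma,3}$, together with the fact that $\delta_\bz(x_0)$ is an automorphism, shows that $\delta_\bz(x_0)$ descends to an automorphism $\overline{\delta_\bz}(x_0)$ of the quotient $G_\Gamma/G_{\Gamma,3}$. Since $\delta_\bz$ is an action of $\Delta\cong\Z/2\Z$, the element $\delta_\bz(x_0)$ is an involution, hence so is its reduction $\overline{\delta_\bz}(x_0)$; this produces the desired action $\overline{\delta_\bz}$ of $\Delta$ on $G_\Gamma/G_{\Gamma,3}$.

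The argument is essentially formal once the characteristic nature of the filtration is recognised, so I do not expect a substantial obstacle. The only point requiring care is the very first one: one genuinely uses the hypothesis, built into the definition of a $\Delta$-RAAG, that $\delta_\bz$ takes values in ${\rm Aut}(G_\Gamma)$, so that $\delta_\bz(x_0)$ is a bona fide automorphism rather than merely a prescription on generators coming from~\eqref{conj}. The explicit formula~\eqref{conj} itself plays no role beyond guaranteeing this well-definedness.
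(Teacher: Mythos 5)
Your proposal is correct, but it takes a genuinely different route from the paper's own proof. The paper works inside the completed group algebra: using the isomorphism $\psi_{G_\Gamma}\colon E(G_\Gamma)\to E_\Gamma$ from \cite[Proposition~$1.7$]{hamza2023extensions}, it writes, for $x,z\in G_\Gamma$,
$$\psi_{G_\Gamma}(x^z)=1+X+\lbrack X,Z\rbrack \times \frac{1}{1+Z},$$
so that conjugation perturbs an element only by terms of strictly higher filtration degree, and it deduces from this that $\delta_\bz(x_0)(G_{\Gamma,n})\subset G_{\Gamma,n}$. You instead stay entirely on the group side and prove the more structural fact that every term of the Zassenhaus filtration is characteristic, by induction on the recursion $G_{\Gamma,n}=G_{\Gamma,\lceil n/2\rceil}^2\prod_{i+j=n}\lbrack G_{\Gamma,i},G_{\Gamma,j}\rbrack$ of \cite[Theorem~$12.9$]{DDMS} (the same recursion the paper itself invokes in the proof of Proposition~\ref{lower RAAGs}). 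Your route has a genuine advantage in rigor here: $\delta_\bz(x_0)$ is \emph{not} conjugation by a fixed element of $G_\Gamma$ --- it is the automorphism determined generator-wise by \eqref{conj} --- so the paper's conjugation formula, read literally, only covers part of what must be checked, whereas ``continuous automorphisms preserve the terms of a characteristic filtration'' applies verbatim to $\delta_\bz(x_0)$, and indeed to any element of ${\rm Aut}(G_\Gamma)$; your closing observation that \eqref{conj} enters only through well-definedness is exactly the right diagnosis. What the paper's computation buys in exchange is quantitative information that your formal argument does not provide: conjugation acts trivially on the associated graded algebra $\E(G_\Gamma)$, which is the refinement the paper exploits later (for instance when introducing the classes $\epsilon_k$ and the ideals $\I_\bz$ in the section on gocha series), rather than merely the set-theoretic stability needed for this lemma.
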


\begin{proof}
Let us consider two elements $x$ and $z$ in $G_\Gamma$. Recall that \cite[Proposition~$1.7$]{hamza2023extensions} and the Magnus isomorphism gives us an isomorphism~$\psi_{G_\Gamma}\colon E(G_\Gamma)\to E_{\Gamma}$. 
%From~$\psi_{G_\Gamma}$, we obtain isomorphisms:
%$$\E(G_\Gamma)\simeq \E_\Gamma, \quad \text{and} \quad \Ll(G_\Gamma)\simeq \Ll_\Gamma.$$
Let us write~$\psi_{G_\Gamma}(x)\coloneq 1+X$ and $\psi_{G_\Gamma}(z)\coloneq 1+Z$ in $E_\Gamma$. 
We infer:
$$\psi_{G_\Gamma}(x^z)= 1+X+\lbrack X,Z\rbrack \times \frac{1}{1+Z}.$$
Thus for every positive integer $n$, we deduce $\delta_\bz(x_0)(G_{\Gamma,n})\subset G_{\Gamma,n}$, and so $\delta_\bz$ induces an action $\overline{\delta_\bz}$ on $G_\Gamma/G_{\Gamma,3}$.
\end{proof}

\begin{coro}
 We have $\mathcal{G}_{\bz}\simeq G_\Gamma/G_{\Gamma,3}\rtimes_{\overline{\delta_\bz}} \Delta.$
%where the action is given by $\delta(x_0)x_i\equiv (x_i^{-1})^z\pmod{G_{\Gamma,3}}$.
\end{coro}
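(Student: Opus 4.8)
The plan is to obtain $\mathcal{G}_{\bz}$ by quotienting the defining split exact sequence of the $\Delta$-RAAG by the subgroup $G_{\Gamma,3}$, and to identify this subgroup with $G_{\bz,3}$ using Proposition~\ref{lower RAAGs}. The essential mathematical content has already been established: Proposition~\ref{lower RAAGs} gives $G_{\bz,n}=G_{\Gamma,n}$ for every $n\geq 2$, and the preceding lemma shows that $\delta_\bz(x_0)$ preserves each $G_{\Gamma,n}$ and thus induces the action $\overline{\delta_\bz}$ on $G_\Gamma/G_{\Gamma,3}$. Given these, the corollary is a formal descent of a split extension.

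First I would record that $N\coloneq G_{\Gamma,3}$ is a normal subgroup of the full group $G_{\bz}$. Indeed $G_{\Gamma,3}$ is characteristic in $G_\Gamma$, which is itself normal in $G_{\bz}$, so $N$ is normalized by $G_\Gamma$; and by the preceding lemma $\delta_\bz(x_0)(G_{\Gamma,3})\subset G_{\Gamma,3}$, so $N$ is also stable under conjugation by $x_0$. Since $G_\Gamma$ and $x_0$ generate $G_{\bz}$, the subgroup $N$ is normal in $G_{\bz}$. By Proposition~\ref{lower RAAGs} applied with $n=3$, we have $N=G_{\Gamma,3}=G_{\bz,3}$, hence $G_{\bz}/N=\mathcal{G}_{\bz}$.

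Next I would quotient the split exact sequence
\begin{equation*}
1\to G_\Gamma \to G_{\bz}\xrightarrow{\pi} \Delta \to 1,
\end{equation*}
whose splitting is the inclusion $s\colon \Delta \hookrightarrow G_{\bz}$, $s(x_0)=x_0$. Because $N$ is normal in $G_{\bz}$ and contained in $G_\Gamma$, passing to the quotient yields
\begin{equation*}
1\to G_\Gamma/N \to G_{\bz}/N \to \Delta \to 1,
\end{equation*}
using $\Delta=G_{\bz}/G_\Gamma\simeq (G_{\bz}/N)/(G_\Gamma/N)$. This sequence remains split via the composite of $s$ with the projection $G_{\bz}\to G_{\bz}/N$, and conjugation by the image of $x_0$ realizes exactly the induced action $\overline{\delta_\bz}$ on $G_\Gamma/G_{\Gamma,3}$ provided by the preceding lemma. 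Therefore $\mathcal{G}_{\bz}=G_{\bz}/N\simeq (G_\Gamma/G_{\Gamma,3})\rtimes_{\overline{\delta_\bz}}\Delta$, as claimed.

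I do not expect a genuine obstacle here, since the arithmetic heart of the statement is Proposition~\ref{lower RAAGs} and the construction of $\overline{\delta_\bz}$, both already available. The only point requiring a moment of care is the verification that $G_{\Gamma,3}$ is normal in the whole semidirect product rather than merely in $G_\Gamma$; this is precisely where the stability $\delta_\bz(x_0)(G_{\Gamma,3})\subset G_{\Gamma,3}$ from the previous lemma is used, and it also guarantees that the induced action in the quotient is well defined and equals $\overline{\delta_\bz}$.
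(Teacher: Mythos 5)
Your proof is correct and follows essentially the same route as the paper: both identify $G_{\bz,3}$ with $G_{\Gamma,3}$ via Proposition~\ref{lower RAAGs} and then descend the defining split exact sequence $1\to G_\Gamma\to G_{\bz}\to\Delta\to 1$ to the quotient, with the induced action being $\overline{\delta_\bz}$. The only cosmetic difference is that you descend the section of the original sequence directly, while the paper instead observes that $x_0\notin G_{\bz,3}$ to produce the splitting; both are valid and equivalent in substance.
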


\begin{proof}
    %We use the relation $x_0x_ix_0=x_i^{-1}$. From that, we infer that $[G_{\Gamma,\Delta};G_{\Gamma;\Delta}]=G_{\Gamma;\Delta}^2=G_\Gamma^2$.

    %Consequently, $\widehat{G_{\Gamma;\Delta}}=G_{\Gamma;\Delta}^4=G_{\Gamma}^4$, so every element in $\mathcal{G}_{\Gamma;\Delta}$ has order $4$. 
    From Proposition \ref{lower RAAGs}, we infer:
$$G_\Gamma\cap G_{\bz,3}=G_\Gamma\cap G_{\Gamma,3}=G_{\Gamma,3}.$$
    
    Consequently, we obtain an exact sequence:
    $$1\to G_\Gamma/G_{\Gamma,3} \to \mathcal{G}_{\bz}\to \Delta \to 1.$$
    Furthermore,~$x_0$ is not in~$G_{\bz,3}$. Thus the previous exact sequence splits and the action of~$\Delta$ on $G_\Gamma/G_{\Gamma,3}$ is given by $\overline{\delta_\bz}$.
\end{proof}

\begin{coro}\label{Delta and Pyt}
    We have $\mathcal{G}_{\bz}^2=\lbrack \mathcal{G}_{\bz}, \mathcal{G}_{\bz}\rbrack.$
\end{coro}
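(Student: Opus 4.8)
The plan is to deduce the statement directly from Lemma~\ref{lemm low}, by observing that the relevant identity already holds for the full group $G_{\bz}$ and is preserved under quotients. First I would record the trivial inclusion: for any pro-$2$ group $G$ one has $\lbrack G,G\rbrack \subseteq G^2$, since identity~$(i)$ from the proof of Lemma~\ref{lemm low} writes every commutator $\lbrack x,y\rbrack = x^{-2}(xy^{-1})^2 y^2$ as a product of squares. Applied to $\mathcal{G}_{\bz}$ this gives $\lbrack \mathcal{G}_{\bz},\mathcal{G}_{\bz}\rbrack \subseteq \mathcal{G}_{\bz}^2$ for free, so the whole content of the corollary is the reverse inclusion.

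For the reverse inclusion I would first upgrade Lemma~\ref{lemm low} to an identity for $G_{\bz}$ itself. Lemma~\ref{lemm low} states $\lbrack G_{\bz},G_{\bz}\rbrack = G_{\bz,2}$, and since $G_{\bz,2}$ is the Frattini subgroup $G_{\bz}^2\lbrack G_{\bz},G_{\bz}\rbrack$ of a pro-$2$ group, the inclusion $\lbrack G_{\bz},G_{\bz}\rbrack \subseteq G_{\bz}^2$ already forces $G_{\bz,2}=G_{\bz}^2$ (this equality is in fact recorded inside the proof of Lemma~\ref{lemm low}). Combining the two yields the key input
$$G_{\bz}^2 = \lbrack G_{\bz},G_{\bz}\rbrack.$$
Conceptually this reflects the presentation~\eqref{z-Pres}: the relation $\lbrack x_0,x_i^{-1}\rbrack x_i^2\lbrack x_i,z_i\rbrack=1$ exhibits each $x_i^2$ as a product of commutators, and $x_0^2=1$, so every generator square lies in $\lbrack G_{\bz},G_{\bz}\rbrack$.

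It then remains to pass to the quotient $\mathcal{G}_{\bz}=G_{\bz}/G_{\bz,3}$. Let $\pi\colon G_{\bz}\to \mathcal{G}_{\bz}$ be the canonical surjection. Because a continuous surjection of profinite groups carries the normal closure of all squares to the normal closure of all squares, and likewise for commutator subgroups, one has $\pi(G_{\bz}^2)=\mathcal{G}_{\bz}^2$ and $\pi(\lbrack G_{\bz},G_{\bz}\rbrack)=\lbrack \mathcal{G}_{\bz},\mathcal{G}_{\bz}\rbrack$. Applying $\pi$ to the identity $G_{\bz}^2=\lbrack G_{\bz},G_{\bz}\rbrack$ gives $\mathcal{G}_{\bz}^2=\lbrack \mathcal{G}_{\bz},\mathcal{G}_{\bz}\rbrack$, as desired. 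In other words, the property ``$G^2=\lbrack G,G\rbrack$'' is inherited by every quotient, so nothing beyond Lemma~\ref{lemm low} is needed.

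There is no serious obstacle here: the only point requiring a moment of care is the quotient-compatibility of the two normal closures, that is, checking that $\pi(G_{\bz}^2)=\mathcal{G}_{\bz}^2$ and $\pi(\lbrack G_{\bz},G_{\bz}\rbrack)=\lbrack \mathcal{G}_{\bz},\mathcal{G}_{\bz}\rbrack$ (which uses surjectivity of $\pi$ together with the fact that images of closed normal subgroups are closed normal). Everything else is an immediate consequence of Lemma~\ref{lemm low}.
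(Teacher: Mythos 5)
Your proof is correct and takes essentially the same route as the paper: the paper's own argument also reduces to Lemma~\ref{lemm low} (giving $\lbrack G_{\bz},G_{\bz}\rbrack = G_{\bz,2}=G_{\bz}^2$) and then pushes this identity through the quotient map $G_{\bz}\to \mathcal{G}_{\bz}=G_{\bz}/G_{\bz,3}$, using exactly the compatibility $\lbrack G_{\bz},G_{\bz}\rbrack G_{\bz,3}/G_{\bz,3}=\lbrack \mathcal{G}_{\bz},\mathcal{G}_{\bz}\rbrack$ and $G_{\bz}^2G_{\bz,3}/G_{\bz,3}=\mathcal{G}_{\bz}^2$ that you verify. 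The only cosmetic difference is that the paper writes this as a single chain of equalities rather than isolating the identity $G_{\bz}^2=\lbrack G_{\bz},G_{\bz}\rbrack$ first.
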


\begin{proof}
    Since $\mathcal{G}_\bz$ is a $2$-group, we have the inclusion $\lbrack \mathcal{G}_{\bz}, \mathcal{G}_{\bz}\rbrack \subset \mathcal{G}_\bz^2$. We also have:
        \begin{flalign*}
            \lbrack \mathcal{G}_{\bz}, \mathcal{G}_{\bz}\rbrack &= \lbrack G_{\bz}/G_{\bz,3}, G_{\bz}/G_{\bz,3}\rbrack
            \\ &=\lbrack {G}_{\bz}, {G}_{\bz}\rbrack G_{\bz,3}/G_{\bz,3}
            \\ &=G_{\bz}^2G_{\bz,3}/G_{\bz,3}
            \\ &=\mathcal{G}_{\bz}^2.
        \end{flalign*}
\begin{comment}

\begin{rema}
This result was already shown by Min{\'a}{\v c}-Rogelstad-Tân in \cite[Corollary $4.8$]{Minac} when $\Gamma$ is a free graph. 
\end{rema}

Assume that $g$ is a square in $\mathcal{G}_{\Gamma,\Delta}$, i.e. there exists $u\in \mathcal{G}_{\Gamma,\Delta}$ such that $g\coloneq u^2$. Let us write $u\coloneq h x_0^{\epsilon_0}$, where $h$ is in $G_\Gamma/G_{\Gamma}^4$. 
%Since $g$ is not trivial, we can assume $\epsilon_0=0$. Consequently, $u$ is in $G_\Gamma/G_{\Gamma}^4$. 
We infer that $g$ is a square in $G_\Gamma/G_{\Gamma}^4$. Take $\widetilde{g}$ a lift of $g$ in $G_{\Gamma}$, then we can write $\widetilde{g}=h_1^2h_2$, with $h_2\in G_{\Gamma,\Delta}^4$. Since $h_1^2\in H^2=\gamma_2(G_{\Gamma,\Delta})$, we conclude.
\end{comment}
\end{proof}

\begin{rema}
Similar results to the previous parts were already known for some groups in~$\PP$. We refer to \cite[Corollary $4.8$ and Lemma $4.12$]{Minac}.
\end{rema}

\subsection{Gocha series and associated graded algebras}
Now, we aim to compute the algebras~$\E(G_{\bz})$ and~$\Ll(G_{\bz})$ as a quotient of~$\E$ and~$\Ll$. The main tools are the Magnus isomorphism and restricted Lie algebras techniques. We refer to~\cite[Chapitre~II, Partie~$3$]{LAZ} and~\cite[§12.1]{DDMS} for further detail. We first observe that~$\E(G)$ is the universal envelope of~$\Ll(G)$. From the Magnus isomorphism, we also have the following chain of morphisms of~$\F_2$-vector spaces~$G_\Gamma /G_{\Gamma,2}\hookrightarrow F(d_\Gamma +1)/F(d_{\Gamma}+1)_2\simeq E/E_2\hookrightarrow \Ll\hookrightarrow \E$.
 
We define $\I_{\bz}$ (resp.~$\J_{\bz}$) the two-sided ideal of $\E$ (resp.\ $\Ll$) generated by:
$$\{X_0^2, \quad \lbrack X_u,X_v],\quad \text{and} \quad [X_0,X_k]+[X_k,\epsilon_k]+X_k^2\}_{\{u,v\}\in \bE \text{ and } 1\leq k \leq d},$$
where $\epsilon_k$ is the image of $z_k$ in~$G_\Gamma$ through the previous chain of maps.
Let us observe that~$\epsilon_k=0$ if and only if $z_k$ has degree~$n_{z_k}$ larger than or equal two. Moreover, since~$z_k$ is in~$G_\Gamma$, then $\epsilon_k$ can be seen as an element in $\Ll_{\Gamma}$. We introduce 
$$\E_{\bz}\coloneq \E/\I_{\bz}, \quad \text{and} \quad \Ll_{\bz}\coloneq \Ll/\J_{\bz}.$$

\begin{theo}\label{gocha Delta Raag}
For every $\Delta$-RAAG $G_\bz$, we have the following isomorphisms of graded-$\F_2$ Lie algebras:
$$\E(G_\bz)\simeq \E_\bz, \quad \text{and} \quad \Ll(G_{\bz})\simeq \Ll_{\bz}.$$
\end{theo}

\begin{proof}
As vector spaces, we have from Proposition \ref{lower RAAGs}:
$$\Ll_1(G_{\bz})\simeq \F_2 \bigoplus \Ll_1(G_{\Gamma}), \quad \text{and for } n\geq 2, \quad \Ll_n(G_{\bz})\simeq \Ll_n(G_{\Gamma}).$$

From the relations satisfied by restricted Lie algebras (see for instance \cite[Equation~$(7)$ in~§$12.1$]{DDMS}) and relations~$X_0^2=0$ and $\lbrack X_0, X_k\rbrack +\lbrack X_k,\epsilon_k\rbrack+X_k^2=0$; we observe for every~$n\geq 2$ that~$\Ll_{\bz,n}\subset \Ll_{\Gamma,n}$. Thus $\dim_{\F_2}\Ll_{\bz,n}\leq \dim_{\F_2}\Ll_{\Gamma,n}$.

Furthermore we have graded surjections~$\E_{\bz}\to \E(G_\bz)$ and~$\Ll_{\bz}\to \Ll(G_\bz)$. Thus~$\dim_{\F_2}\Ll_n(G_\bz)\leq \dim_{\F_2} \Ll_{\bz,n}$.
Consequently, from Proposition~\ref{lower RAAGs} and the previous discussion, we deduce for every $n\geq 2$ that:
$$\dim_{\F_2}\Ll_n(G_\bz)\leq \dim_{\F_2} \Ll_{\bz,n} \leq \dim_{\F_2} \Ll_{\Gamma,n} = \dim_{\F_2}\Ll_n(G_\bz).$$

This implies that $\Ll_\bz\simeq \Ll(G_\bz)$. Since $\E_{\bz}$ is the universal envelope of $\Ll_\bz$ and $\E(G_\bz)$ is the universal envelope of~$\Ll(G_\bz)$, we infer $\E_\bz\simeq \E(G_\bz)$.
\end{proof}

\begin{rema}\label{quad defined}
Following the terminology of \cite[Definition~$7.7$]{minac2021koszul}, Theorem~\ref{gocha Delta Raag} shows that the presentation~\eqref{z-Pres} is quadratically defined.
\end{rema}

\begin{coro}\label{gocha Delta Raag2}
We have 
$$gocha(G_{\bz},t)=\frac{1+t}{\Gamma(-t)}.$$
\end{coro}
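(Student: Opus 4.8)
The plan is to read $gocha(G_\bz,t)$ as the Hilbert series of the graded algebra $\E(G_\bz)$ and to feed in Theorem~\ref{gocha Delta Raag}, which identifies $\E(G_\bz)$ with the universal restricted enveloping algebra $\E_\bz \simeq u(\Ll(G_\bz))$. The first step is to invoke the PBW-type Hilbert series formula for the restricted enveloping algebra of a finitely generated graded restricted Lie algebra over $\F_2$ (see \cite[§12.1]{DDMS}). Since a restricted PBW basis consists of ordered monomials in a homogeneous basis of $\Ll(G_\bz)$ with each factor appearing to the power $0$ or $1$, one gets
$$gocha(G_\bz,t)=\prod_{n\geq 1}(1+t^n)^{\dim_{\F_2}\Ll_n(G_\bz)},$$
and the same identity holds for $G_\Gamma$ in place of $G_\bz$.

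Second, I would compare the two Lie algebras degree by degree. By Proposition~\ref{lower RAAGs}, and as already recorded in the proof of Theorem~\ref{gocha Delta Raag}, one has $\Ll_n(G_\bz)\simeq \Ll_n(G_\Gamma)$ for every $n\geq 2$, while in degree one the extra generator $x_0$ contributes exactly one dimension, so that $\dim_{\F_2}\Ll_1(G_\bz)=\dim_{\F_2}\Ll_1(G_\Gamma)+1$. Substituting into the product formula, all factors with $n\geq 2$ match term by term and the degree-one factor acquires a single extra $(1+t)$:
$$gocha(G_\bz,t)=(1+t)\prod_{n\geq 1}(1+t^n)^{\dim_{\F_2}\Ll_n(G_\Gamma)}=(1+t)\,gocha(G_\Gamma,t).$$

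Third, it remains to insert the Hilbert series of $\E(G_\Gamma)\simeq \E_\Gamma$. This is the classical clique-polynomial formula for RAAGs, $gocha(G_\Gamma,t)=1/\Gamma(-t)$, which I would either cite directly or re-derive from the same restricted PBW product, combined with Koszulity of $\E_\Gamma$ (\cite[Theorem~1.2]{bartholdi2020right}) and the identification of the clique polynomial $\Gamma(t)$ with the Hilbert series of the Koszul-dual cohomology algebra. Putting this together with the previous step gives
$$gocha(G_\bz,t)=(1+t)\cdot\frac{1}{\Gamma(-t)}=\frac{1+t}{\Gamma(-t)},$$
which is the claim.

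I expect the only delicate point to be the bookkeeping of the degree-one term: one must be sure that passing from $G_\Gamma$ to $G_\bz$ adds precisely one dimension, concentrated in $\Ll_1$, and leaves every higher graded piece untouched. This is exactly what Proposition~\ref{lower RAAGs} supplies, so once the restricted PBW Hilbert series identity over $\F_2$ is cleanly stated, the rest is a routine manipulation of formal power series.
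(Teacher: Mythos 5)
Your proposal is correct and follows essentially the same route as the paper: the paper's proof likewise combines Proposition~\ref{lower RAAGs} (the degree-by-degree comparison $\Ll_1(G_\bz)\simeq \F_2\oplus\Ll_{\Gamma,1}$ and $\Ll_n(G_\bz)\simeq\Ll_{\Gamma,n}$ for $n\geq 2$) with the Jennings--Lazard formula --- which is exactly your restricted PBW product $\prod_{n\geq 1}(1+t^n)^{\dim_{\F_2}\Ll_n}$ --- to get $gocha(G_\bz,t)=(1+t)\,gocha(G_\Gamma,t)$, and then invokes the classical clique-polynomial identity $gocha(G_\Gamma,t)=1/\Gamma(-t)$. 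Your appeal to Theorem~\ref{gocha Delta Raag} is harmless but not needed, since $\E(G)$ being the restricted envelope of $\Ll(G)$ is part of the general Jennings--Lazard theory the paper cites.
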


\begin{proof}
%The algebra $\E_{\delta}$ is the universal envelope of $\Ll_{\delta}$. Consequently, using the previous Lemma and Jennings-Lazard formula, we deduce that $\E_{\delta}\simeq \E( G_{\delta})$.
Proposition~\ref{lower RAAGs} gives us $\Ll_1(G_{\bz})\simeq \F_2 \bigoplus \Ll_{\Gamma,1}$, and for  n$\geq 2$, $\Ll_n(G_{\bz})\simeq \Ll_{\Gamma,n}$.
Then, from Jennings-Lazard formula \cite[Proposition $3.10$, Appendice A]{LAZ}, we obtain:
$$gocha(G_{\bz},t)=(1+t)\times gocha(G_{\Gamma},t)=\frac{1+t}{\Gamma(-t)}.$$
\end{proof}

\begin{comment}
\begin{rema}
Let us relate the last result to \cite[Section~$5$]{minac2021koszul}, but in our case~${\rm Im}(\chi_0)\subset 1+2\Z_2$. We can also quote \cite{quadrelli2023massey}, and \cite{blumer2023oriented}.
%The algebra $\E_{\Gamma,\Delta}(\F_2)$ looks Koszul, however I do not have yet a clear and concise argument, I propose to show it using strong argument. Observe that if $\E_{\Gamma,\Delta}(\F_2)$ is Koszul, then using Theorem \ref{gocha Delta Raag} and \cite[Proposition $1$]{hamza2023extensions}, we infer the cohomology of $G$.
\end{rema}

\begin{theo}\label{koszul Delta-RAAGs}
We have:
$$H^\bullet(G_{\Gamma,\Delta})\simeq \A_{\Gamma,\Delta},$$
where $\A_{\Gamma,\Delta}$ is the $\F_2$- graded algebra on $X_0,\dots, X_d$ generated by relations:
\begin{itemize}
\item $X_iX_j$ for $\{i;j\}\notin \bE^\Delta$,
\item $X_i^2$ for $i>0$,
\item $[X_u,X_v]$ for every $u,v$.
\end{itemize}
\end{theo}

\begin{proof}
The algebra $\A_{\Gamma,\Delta}$ is the quadratic dual of the algebra $\E_{\Gamma,\Delta}$ and is Koszul by \cite[Theorem~section $3$]{froberg1975determination}. Consequently $\E_{\Gamma,\Delta}$ is Koszul and we conclude by~\cite[Proposition~$1$]{hamza2023extensions}.
\end{proof}
\end{comment}

\subsection{Example: $\delta_{\bz_0}$-action}
We take $\bz_0\coloneq (1,\dots, 1)$ in~$G_\Gamma^{d_\Gamma}$. From \eqref{conj}, the action~$\delta_{\bz_0} \colon \Delta \to {\rm Aut}(G_\Gamma)$ is defined by~$\delta_{\bz_0}(x_0)(x_i)\coloneq x_i^{-1}$. By~\cite[Proposition~$3.16$]{hamza2023zassenhaus}, this action is well-defined.  Let~$\Gamma^f$ be a free graph on~$d_\Gamma$ vertices and~$\Gamma^c$ be the complete graph on~$d_\Gamma$ vertices. We observe the following facts.

$\bullet$ We have an isomorphism:
$$\coprod_{i=1}^{d_\Gamma+1} \Delta \simeq F(d_\Gamma) \rtimes_{\delta_{\bz_0}} \Delta\coloneq G_{\Gamma^f}\rtimes_{\delta_{\bz_0}}\Delta.$$

$\bullet$ Every $\bz$ in $G_{\Gamma^f}^{d_\Gamma}$ defines a $\Delta$-RAAG $G_\bz$.

$\bullet$ Every $\bz$ in $G_{\Gamma^c}^{d_\Gamma}$ defines a $\Delta$-RAAG $G_\bz$. Since~$G_{\Gamma^c}$ is commutative, we infer:
$$G_\bz \simeq \Z_2^{d_\Gamma}\rtimes_{\delta_{\bz_0}}\Delta\simeq G_{\Gamma^c}\rtimes_{\delta_{\bz_0}}\Delta.$$

\begin{defi}[SAP and superpythagorean groups]
Let us define $\bz_0\coloneq (1,\dots,1)\in G_\Gamma^{d_\Gamma}$. We say that a $\Delta$-RAAG $G_\bz$ is:
\begin{itemize}
\item a SAP group, if $G_\bz \simeq G_{\Gamma^f}\rtimes_{\delta_{\bz_0}} \Delta$, %where $\Gamma_N$ is the free graph on $d$ vertices (i.e. no edges),
\item a superpythagorean group, if $G_\bz \simeq G_{\Gamma^c} \rtimes_{\delta_{\bz_0}} \Delta$. %where $\Gamma_C$ is the complete graph on $d$ vertices.
\end{itemize}
\end{defi}

The previous definition is motivated by Definition~\ref{SAP and superpythagorean}. As we see in the next section, these groups play a fundamental role in the study of the RPF fields. Let us recall that SAP is the acronym for Strong Approximation Property. We refer to~\cite[pages $264$ and~$265$]{lam2005introduction} for further details.

\section{Pythagorean fields}
%We say that a field $K$ is RPF if $K$ is a formally real (i.e. $-1$ is not a square) Pythagorean (the sum of two squares is a square) field such that $K^\times/ K^{2\times}$ is finite. 

Let~$K$ be a RPF field and $L\coloneq K(\sqrt{-1})$. We consider $G_K$ (resp. $G_L$) the maximal pro-$2$ quotient of the absolute Galois group of~$K$ (resp.~$L$). We denote by $K^\times$ (resp.~$K^{\times 2}$) the group of invertible elements (resp. invertible squares) of~$K$, and $|K^\times/K^{\times 2}|$ the cardinality of the~$\F_2$-vector space~$K^\times/K^{\times 2}$. %In Corollary \ref{gener}, we show that $G_K/G_K^2\simeq G_L/G_L^2\times \Delta$, where $\Delta$ is a subgroup of order $2$ of $G_K$. %Consequently, we define a system of generators of $G_K$, by $\{x_0,x_1,\dots, x_d\}$ where $x_0$ is the generator of $\Delta$ and $\{x_1,\dots, x_d\}$ is a set of generators of $G_L$.
The main goal of this section is to show the following result:
\begin{theo}\label{mainpyt}
Let $K$ be a field. We have the following equivalence:
\begin{itemize}
    \item[$(i)$] there exists a $\Delta$-RAAG $G_\bz$ such that $G_{\bz}\simeq G_K$,
    \item[$(ii)$] the field $K$ is RPF.
\end{itemize}
Furthermore, if $K$ is a RPF field, then the group~$G_K$ is uniquely determined by an underlying graph~$\Gamma$ with~$d_\Gamma$ vertices and an element $\bz$ in $G_\Gamma^{d_\Gamma}$.

%Assume that $K$ is a RPF field. Then there exists a graph $\Gamma$ and a~$d_\Gamma$-uplet~$\bz$ in~$G_\Gamma^{d_\Gamma}$ such that $(i)$ the associated $\Delta$-RAAG $G_\bz$ is well-defined and $(ii)$~satisfies~$G_K=G_{\bz}$ and $G_L=G_\Gamma$.

%Furthermore, let $K_1$ and $K_2$ be two RPF fields with underlying fields~$\Gamma_1$ and~$\Gamma_2$. If~$G_{K_1}\simeq G_{K_2}$ then $\Gamma_1\simeq \Gamma_2$.  
%Consequently there exists a family $\{z_i\}$ in $G_\Gamma$ such that
%$$G_K=G_L\rtimes_\delta \Delta,$$
%where $\delta(x_0).x_i\coloneq  (x_i^{-1})^{z_i}$.
\end{theo}

We introduce several results before proving Theorem~\ref{mainpyt}.

\subsection{Semi-direct product and the class~$\PP$}
%Assume that $H_\Gamma$ is pro-$2$ RAAG with set of generators $y_1,\dots, y_d$ and set of relations $[y_i;y_j]$. We define an action of $\Z/2\Z\coloneq <x_0>$ on $H_\Gamma$ that we call $st$ defined by $x_0.y_i=y_i^{-1}$. We define $G_{\Gamma}\coloneq H_{\Gamma}\rtimes_{st} \Z/2\Z$.
 First, we show that we can write~$G_K$ as a semi-direct product of $G_L$ by $\Delta$.  For this purpose, let us recall \cite[Proposition]{minac1986galois}:
\begin{prop}\label{actioninv}
There exists a unique morphism $\phi \colon G_K \to \Delta$ such that $\phi(\sigma)=x_0$ for every involution $\sigma$ in $G_K$. Furthermore, $\ker(\phi)=G_L$.
\end{prop}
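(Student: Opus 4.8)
The natural candidate for $\phi$ is the projection coming from the exact sequence~\eqref{split Pyt}, namely the quotient map $\phi\colon G_K\twoheadrightarrow G_K/G_L\simeq \Gal(L/K)=\Delta$; for this map the assertion $\ker(\phi)=G_L$ is built in, so the whole content lies in characterising $\phi$ by its values on involutions. My plan is to translate everything into Kummer theory. By Kummer theory we identify $\mathrm{Hom}(G_K,\Delta)=H^1(G_K,\F_2)\simeq K^\times/K^{\times 2}$; write $\chi_a$ for the character attached to a class $a\in K^\times/K^{\times 2}$, so that $\ker(\chi_a)$ is the subgroup cutting out $K(\sqrt{a})$. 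Under this dictionary the extension $L=K(\sqrt{-1})$ corresponds to $\chi_{-1}$, and since $L/K$ is itself a quadratic (hence pro-$2$) extension one checks that $\ker(\chi_{-1})=G_L$; thus $\phi=\chi_{-1}$, and the task reduces to statements about $\chi_{-1}$.

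Next I would use Artin--Schreier theory to describe involutions. Every involution $\sigma\in G_K$ fixes a real closed field, and hence determines an ordering $P$ of $K$ (and conversely); the key compatibility is that for $a\in K^\times$ one has $\chi_a(\sigma)=0$ if and only if $a$ is a square in the real closure $R_P$, i.e. if and only if $a$ is positive for $P$. Since $-1$ is negative for every ordering, this immediately gives $\chi_{-1}(\sigma)\neq 0$, i.e. $\phi(\sigma)=x_0$ for every involution $\sigma$, proving existence. Equivalently, if $\phi(\sigma)=1$ then $\sigma$ would be an involution of $G_L$, which is impossible because $L\ni\sqrt{-1}$ has no ordering and so $G_L$ is torsion-free.

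For uniqueness, suppose $\phi'\in\mathrm{Hom}(G_K,\Delta)$ also sends every involution to $x_0$, and write $\phi'=\chi_b$. Then the product character $\chi_{-1}\chi_b=\chi_{-b}$ vanishes on every involution, so by the compatibility above $-b$ is positive for every ordering of $K$, that is, $-b$ is totally positive. Here the hypotheses on $K$ enter decisively: by Artin's theorem totally positive elements are sums of squares, and because $K$ is Pythagorean every sum of squares is a square, so $-b\in K^{\times 2}$, whence $b\equiv -1$ and $\phi'=\chi_{-1}=\phi$. This is where I expect the genuine work to be, and where the Pythagorean and formally real hypotheses are really used; note that the finite type hypothesis plays no role in this particular statement. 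The one delicate point to pin down carefully is the Artin--Schreier correspondence between involutions of the \emph{maximal pro-$2$ quotient} $G_K$ and orderings of $K$, together with the compatibility $\chi_a(\sigma)=0\Leftrightarrow a>_P 0$: this is the main obstacle, and I would either invoke it from \cite{minac1986galois} or reduce it to the fact that a field containing $\sqrt{-1}$ has torsion-free maximal pro-$2$ Galois group.
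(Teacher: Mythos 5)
There is no proof in the paper to compare yours against: Proposition~\ref{actioninv} is recalled verbatim from \cite{minac1986galois} (the paper introduces it with ``let us recall''), so the only meaningful comparison is with the classical argument behind that citation --- and your reconstruction is exactly that argument, carried out correctly. Kummer theory identifies $\mathrm{Hom}(G_K,\Delta)$ with $K^{\times}/K^{\times 2}$; the candidate is $\chi_{-1}$, whose kernel is $G_L$ (this uses that the maximal pro-$2$ extensions of $K$ and $L$ coincide); existence amounts to the assertion that no involution of $G_K$ lies in $G_L$; and uniqueness amounts to the fact that a class which is positive at every ordering is a sum of squares (Artin) and hence, by the Pythagorean hypothesis, a square. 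Your accounting of the hypotheses is also accurate: formal reality and Pythagorean-ness carry the uniqueness half, and the finite type hypothesis is never used.

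One refinement on the point you yourself flag as delicate: the two directions of the involution--ordering dictionary have different costs, and they are used in different halves of your proof. Uniqueness needs only the cheap direction (each ordering $P$ yields an involution $\sigma_P$ with the stated compatibility): restrict the generator of $\Gal(\overline{K}/R_P)$, where $R_P$ is a real closure of $(K,P)$, to the maximal pro-$2$ extension $K(2)$; the restriction is nontrivial because $\sqrt{-1}\in K(2)\setminus R_P$, and $\chi_a(\sigma_P)=0 \iff a>_P 0$ holds because $\sqrt{a}\in K(2)$ for every $a\in K^{\times}$. That is classical Artin--Schreier. Existence needs the expensive direction --- every involution of $G_K$ determines an ordering, equivalently $G_L$ is torsion-free --- and this is Becker's pro-$2$ analogue of Artin--Schreier: the fixed field of an involution of $\Gal(K(2)/K)$ is Euclidean, hence formally real. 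This does not follow formally from the statement about $\Gal(\overline{K}/K)$, since an involution of the quotient $\Gal(K(2)/K)$ is not known a priori to lift to a torsion element upstairs (the kernel is torsion-free but of possibly infinite cohomological dimension, so Serre's theorem does not apply). So that ingredient must genuinely be quoted, from \cite{minac1986galois} or from Becker's work, exactly as you propose; with it in place your proof is complete.
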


To conclude that $G_K$ is a semi-direct product, it is sufficient to show that the morphism $\phi$ defined in Proposition \ref{actioninv} admits a section. For this purpose, we will use \cite[Proposition $4.9$]{Minac}. Before, let us recall the following definition:

\begin{defi}[SAP and superpythagorean fields]\label{SAP and superpythagorean}
We say that $K$ is
\begin{itemize}
\item a SAP field if $G_K$ is a SAP group, i.e. $G_K\simeq G_{\Gamma^f}\rtimes_{\delta_{\bz_0}} \Delta$, %where $\Gamma_N$ is the free graph on $d$ vertices (i.e. no edges),
\item a superpythagorean field if $G_K$ is a superpythagorean field, i.e.\ $G_K\simeq G_{\Gamma^c} \rtimes_{\delta_{\bz_0}} \Delta$. %where $\Gamma_C$ is the complete graph on $d$ vertices.
\end{itemize}
\end{defi}

\begin{exem}
We give here a few examples of RPF fields. We refer to \cite[Chapter $8$, section~$4$]{lam2005introduction} for more examples and further details.

$\bullet$ As a first example, we can take $K\coloneq \RR$. Then $\RR$ is a RPF field and both a SAP and a superpythagorean field. Indeed, we have $G_{\RR}\simeq G_\emptyset \rtimes_{\delta_{\bz_0}} \Delta$, where $\emptyset$ is the empty graph.

$\bullet$ As a second example, let us take $K\coloneq \RR((x_1)) \dots ((x_d))$, the field of iterated Laurent series over $K$. Then, the field $K$ is RPF and superpythagorean, i.e.\ we have~$G_K\simeq G_{\Gamma^c}\rtimes_{\delta_{\bz_0}} \Delta$.
\end{exem}

Let us recall \cite[Corollaries $4.2$ and $4.11$]{Minac}:
\begin{prop}\label{actioninv2}
For every integer $d_\Gamma \geq 0$, there exists two RPF fields $N$ and $C$ such that:~$(i)$~$|C^{\times}/C^{\times 2}|=|N^\times/N^{\times 2}|=2^{d_\Gamma+1}$,~$(ii)$ the field~$N$ is SAP, and~$(iii)$ the field~$C$ is superpythagorean. 

Furthermore, we fix $K$ a RPF field with $|K^{\times}/K^{\times 2}|=2^{d_\Gamma +1}$. Then there exists $N$ and $C$ as before, such that the following commutative diagram, with exact rows, holds:

\centering{
\begin{tikzcd}
1 \arrow[rr] &  & F(d_\Gamma)\simeq G_{\Gamma^f} \arrow[rr] \arrow[d, two heads] &  & G_N \arrow[rr, "\phi_N"] \arrow[d, "\pi_N", two heads] &  & \Delta \arrow[rr] \arrow[d, "\rm{id}"] &  & 1 \\
1 \arrow[rr] &  & G_L \arrow[rr] \arrow[d, two heads] &  & G_K \arrow[rr, "\phi_K"] \arrow[d, "\pi_C", two heads] &  & \Delta \arrow[rr] \arrow[d, "\rm{id}"] &  & 1    \\
1 \arrow[rr] &  & \Z_2^{d_\Gamma}\simeq G_{\Gamma^c} \arrow[rr]                    &  & G_C \arrow[rr, "\phi_C"]                                &  & \Delta \arrow[rr]            &  & 1
\end{tikzcd}}

\begin{comment}
\begin{tikzcd}
G_C=F_d \rtimes_{st}\Delta \arrow[rrdd, "\pi_C", two heads] \arrow[rrrr, two heads] &  &                                   &  & G_D=\Z_2^d \rtimes_{st}\Delta \\
                                                                                  &  &                                   &  &                               \\
                                                                                  &  & G_K \arrow[rruu, "\pi_D", two heads] &  &                              
\end{tikzcd}
\end{comment}

 \justifying
In addition, the columns are all epimorphisms, and the first and last rows split.
\end{prop}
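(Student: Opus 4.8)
The plan is to treat this statement as a recollection of \cite[Corollaries~$4.2$ and~$4.11$]{Minac} and to indicate the field-arithmetic skeleton behind it. The strategy is to realize the two \emph{extremal} RPF structures carrying a prescribed number of square classes — the ``free-est'' one ($N$, SAP) and the ``most abelian'' one ($C$, superpythagorean) — and then to show that an arbitrary RPF field $K$ with the same square-class count is squeezed between them, in the sense that $G_N$ surjects onto $G_K$ and $G_K$ surjects onto $G_C$, compatibly with the projections to $\Delta$ furnished by Proposition~\ref{actioninv}.

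First I would settle existence. For the superpythagorean field I would take the explicit iterated Laurent series field $C\coloneq \RR((x_1))\cdots((x_{d_\Gamma}))$ from the examples above; a standard valuation-theoretic computation (Springer's theorem, applied $d_\Gamma$ times) shows that $C^\times/C^{\times 2}$ has $\F_2$-dimension $d_\Gamma+1$, with one class for $-1$ and one for each uniformizer $x_i$, and that $G_C\simeq G_{\Gamma^c}\rtimes_{\delta_{\bz_0}}\Delta$ with $G_{\Gamma^c}\simeq \Z_2^{d_\Gamma}$. For the SAP field $N$ I would realize the free pro-$2$ product $\coprod_{i=1}^{d_\Gamma+1}\Delta\simeq G_{\Gamma^f}\rtimes_{\delta_{\bz_0}}\Delta$ as an absolute Galois $2$-group, using the realizability of SAP spaces of orderings with prescribed finite square-class group, as in \cite{lam2005introduction}. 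In both cases the first and last rows split automatically, since $G_N$ and $G_C$ are by construction the honest semidirect products $G_{\Gamma^f}\rtimes_{\delta_{\bz_0}}\Delta$ and $G_{\Gamma^c}\rtimes_{\delta_{\bz_0}}\Delta$; notably, the splitting of the \emph{middle} row is not asserted here and is exactly what the main theorem will later upgrade.

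Next I would construct the vertical maps. The left column is governed by a generator count: Kummer theory gives $\dim_{\F_2}H^1(G_K)=\dim_{\F_2}K^\times/K^{\times 2}=d_\Gamma+1$, and the standard comparison of square classes along the quadratic extension in~\eqref{split Pyt} forces $\dim_{\F_2}H^1(G_L)=d_\Gamma$, so $G_L$ is minimally generated by $d_\Gamma$ elements. Hence the free pro-$2$ group $G_{\Gamma^f}\simeq F(d_\Gamma)$ surjects onto $G_L$, and composing with the maximal $\Z_2^{d_\Gamma}$-quotient $G_L\twoheadrightarrow \Z_2^{d_\Gamma}\simeq G_{\Gamma^c}$ (the fan quotient) yields the whole left column, all three maps being $\delta_{\bz_0}$-equivariant epimorphisms. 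The middle map $\pi_N$ I would obtain by lifting a minimal generating system of $G_K$ to involutions — possible by Artin–Schreier since $K$ is formally real — and invoking the universal property of the free pro-$2$ product $\coprod_{d_\Gamma+1}\Delta$; the map $\pi_C$ I would obtain from the co-universal (maximal-fan) property. Commutativity and the equalities $\phi_N,\phi_K,\phi_C=$ (projection to $\Delta$) then follow by tracking the distinguished element $x_0$.

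I expect the main obstacle to be precisely these universal and co-universal properties: proving that \emph{every} RPF field $K$ with $|K^\times/K^{\times 2}|=2^{d_\Gamma+1}$ receives a surjection from the fixed SAP group $G_N$ and maps onto the fixed superpythagorean group $G_C$, compatibly with the entire diagram. The delicate point is the existence of a generating set of involutions of $G_K$ with images spanning the Frattini quotient, which is not soft group theory but genuinely uses the abundance of orderings coming from the reduced theory of quadratic forms. This is where one needs Marshall's structure theory of spaces of orderings together with its Galois-theoretic translation, and it is the content of \cite[Corollaries~$4.2$ and~$4.11$]{Minac} on which I would ultimately rely.
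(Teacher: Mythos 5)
The paper gives no proof of this proposition at all: it is stated as a direct recollection of \cite[Corollaries~$4.2$ and~$4.11$]{Minac}, and your proposal ultimately defers to exactly that same citation for the genuinely hard content (the existence of a minimal generating system of involutions for $G_K$ and the realizability of the SAP and superpythagorean extremes with prescribed square-class count). Your surrounding sketch --- iterated Laurent series for $C$, the universal property of $\coprod_{d_\Gamma+1}\Delta$ for $\pi_N$, and obtaining the left column by restricting the vertical maps to the kernels of the projections to $\Delta$ --- is consistent with the framework of the paper, so this is essentially the same approach.
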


We are now able to prove the following result.

\begin{theo}\label{semidirect}
There exists a unique morphism $\phi_K\colon G_K\to \Delta$ such that:
\begin{itemize}
\item[$(i)$]
for every involution $\sigma\in G_K$, we have $\phi_K(\sigma)=x_0$,
\item[$(ii)$] the kernel of $\phi_K$ is $G_L$,
\item[$(iii)$] the morphism $\phi_K$ admits a section $\psi_K$.
\end{itemize}
\end{theo}

\begin{proof}
By Proposition \ref{actioninv}, the morphism~$\phi_K$ satisfies $(i)$ and $(ii)$. Let us show that~$\phi_K$ satisfies $(iii)$. The map $\phi_N$ introduced in Proposition \ref{actioninv2} admits a section $\psi_N$. We define~$\psi_K\coloneq \pi_N \circ \psi_N$. From Proposition~\ref{actioninv2} we have~$\phi_K \circ \psi_K(x_0)=\phi_K \circ \pi_N \circ \psi_N(x_0)=x_0$.
Thus~$\psi_K$ is a section of~$\phi_K$.
\end{proof}

Consequently, if $K$ is a RPF field, we can write:
$$G_K\coloneq G_L\rtimes_{\psi_K}\Delta.$$

\begin{coro}\label{gener}
We have $G_K/G_K^2\simeq G_L/G_L^2\times \Delta$.
\end{coro}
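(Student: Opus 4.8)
The plan is to exploit the splitting $G_K\simeq G_L\rtimes_{\psi_K}\Delta$ furnished by Theorem~\ref{semidirect} and to reduce everything to a single assertion: the conjugation action of the involution $\sigma\coloneq \psi_K(x_0)$ is trivial on the Frattini quotient $G_L/G_L^2$. Once this is established, the direct product decomposition is almost formal, because over $\F_2$ both inversion and inner conjugation become trivial on $G_L/G_L^2$. I write $\alpha$ for the action of $x_0$ on $G_L$ induced by $\psi_K$, so that $\alpha(g)=\sigma g\sigma^{-1}$ for $g\in G_L$.

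First I would import the generator-inversion property from the SAP group. Recall from the proof of Theorem~\ref{semidirect} that $\psi_K=\pi_N\circ\psi_N$, where $\pi_N\colon G_N\to G_K$ is the $\Delta$-equivariant epimorphism of Proposition~\ref{actioninv2} and $G_N=F(d_\Gamma)\rtimes_{\delta_{\bz_0}}\Delta$, so that $\psi_N(x_0)$ acts on the free generators by $\delta_{\bz_0}(x_0)(x_i)=x_i^{-1}$. Since the left vertical map $F(d_\Gamma)=G_{\Gamma^f}\twoheadrightarrow G_L$ in that diagram is the restriction of $\pi_N$, the elements $y_i\coloneq \pi_N(x_i)$ topologically generate $G_L$, and $\Delta$-equivariance gives $\alpha(y_i)=\pi_N\big(\delta_{\bz_0}(x_0)(x_i)\big)=\pi_N(x_i^{-1})=y_i^{-1}$. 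Thus $\alpha$ inverts a generating set of $G_L$.

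Next I would deduce triviality of $\alpha$ on $V\coloneq G_L/G_L^2$. By identity $(i)$ of Lemma~\ref{lemm low} one has $\lbrack G_L,G_L\rbrack\subset G_L^2$, so $V$ is an $\F_2$-vector space on which $\alpha$ induces an $\F_2$-linear involution. On the spanning set it acts by $\bar y_i\mapsto \overline{y_i^{-1}}=\bar y_i$, hence $\alpha$ is the identity on $V$; in particular $\alpha(g)\equiv g\equiv g^{-1}\pmod{G_L^2}$ for every $g\in G_L$. Finally I would identify $G_K^2$ with $G_L^2$: squares $g^2$ with $g\in G_L$ lie in $G_L^2$, $\sigma^2=1$, and $(g\sigma)^2=g\,\alpha(g)\equiv g g^{-1}=1\pmod{G_L^2}$ by the previous step, so every square of $G_K$ lies in $G_L^2$; since $G_L^2=\Phi(G_L)$ is characteristic in $G_L$ and therefore normal in $G_K$, taking normal closures yields $G_K^2=G_L^2$. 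Consequently $G_K/G_K^2=(G_L\rtimes_{\psi_K}\Delta)/G_L^2=(G_L/G_L^2)\rtimes\Delta$, and the triviality of the action turns this semi-direct product into the direct product $G_L/G_L^2\times\Delta$.

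The only genuinely delicate point, and the step I would treat with the most care, is the triviality of the action on the Frattini quotient, that is, correctly transporting the inversion of generators through the equivariant epimorphism $\pi_N$ and checking that the $y_i$ really do generate $G_L$. Everything afterwards is a formal consequence of the fact that we work over $\F_2$, where $-1=1$.
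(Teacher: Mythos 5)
Your proof is correct and takes essentially the same route as the paper: both arguments exploit the sandwich diagram of Proposition~\ref{actioninv2} (equivalently, the section $\psi_K=\pi_N\circ\psi_N$ constructed in Theorem~\ref{semidirect}) and reduce the corollary to the single identity $G_K^2=G_L^2$, after which the direct product decomposition is formal because $[G_K,G_K]\subset G_K^2$ makes everything an $\F_2$-vector space. The only difference is one of packaging: where the paper obtains $G_K^2=G_L^2$ by citing that $G_N$ and $G_C$ are $\Delta$-RAAGs (so Lemma~\ref{lemm low} gives $G_N^2=G_{\Gamma^f}^2$, which is pushed through the epimorphism $\pi_N$), you re-derive the same fact by transporting the generator-inverting action through $\pi_N$ and computing $(g\sigma)^2=g\,\alpha(g)\equiv 1 \pmod{G_L^2}$ --- a fleshed-out, self-contained version of the mechanism the paper compresses into a citation.
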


\begin{proof}
%From Theorem \ref{semidirect}, the group $\Delta$ is a subgroup of $G_K$. 
Let us use the notations from Proposition \ref{actioninv2}. Since $G_N$ and $G_C$ are $\Delta$-RAAGs and $G_K^2\subset G_L$, we deduce from Proposition \ref{actioninv2} that $G_K^2=G_L^2$. Consequently, 
$$G_K/G_K^2\simeq (G_K/G_L)\times (G_L/G_L^2)\simeq \Delta\times G_L/G_L^2.$$
\begin{comment}
 we can define a morphism $\phi\colon G_K \to \Delta$ and we have the following commutative diagram, with exact rows:

\centering{
\begin{tikzcd}
0 \arrow[rr] &  & F_d \arrow[rr, hook] \arrow[dd, "(\pi_C)_{|F_d}"] &  & G_C \arrow[rr, two heads] \arrow[dd, "\pi_C", two heads] &  & \Delta \arrow[rr] \arrow[dd, "(\pi_C)_{|\Delta}"] &  & 0 \\
             &  &                                             &  &                                                      &  &                                             &  &   \\
0 \arrow[rr] &  & G_L=\pi_C(F_d) \arrow[rr, hook]                   &  & G_K \arrow[rr, two heads]                            &  & \Delta \arrow[rr]                           &  & 0
\end{tikzcd} }
\\ \justifying From Proposition \ref{actioninv2}, we have $\pi_D\circ \pi_C(x_0)=x_0$, so $\pi_C(x_0)$ has order $2$ in $G$. Therefore $(\pi_C)_{|\Delta}$ is an isomorphism. Since the first row admits a section, the second also has one that we call $\psi$. By the same argument, we show that $\pi_D \circ \pi_C(F_d)=\Z_2^d$. 

Observe that 
$$G_K/G_K^2=G_C/G_C^2=G_D/G_D^2=G_L/G_L^2\times \F_2\simeq \F_2^{d+1}.$$ 
\end{comment}
\end{proof}

\subsection{Structural results}
In this part, we recall structural results on~$\PP$.
%Using Theorem~\ref{semidirect}, we need the following notations. 
Let $H$ be a pro-$2$ group and let $\psi$ be a morphism of groups $\psi\colon \Delta\to {\rm Aut}(H)$. We consider the semi-direct product $G\coloneq H\rtimes_{\psi} \Delta$.

\begin{defi}[Semi-trivial action]
Let $n$ be a positive integer. We use the multiplicative notation for the product of the abelian pro-$2$ group~$\Z_2^n$. We define an action~$i_n$ of $G$ on $\Z_2^n$, that we call semi-trivial, by:
\begin{itemize}
\item[$\bullet$]for every $h\in H$ and $v\in \Z_2^n$, $i_n(h).v\coloneq v$,
\item[$\bullet$] for every $v\in \Z_2^n$, $i_n(x_0).v\coloneq v^{-1}$.
\end{itemize}
\end{defi}

\begin{rema}\label{rem semitriv}
Observe that:
$$\Z_2^n \rtimes_{i_n} G \simeq (\Z_2^n \times H) \rtimes_{i_n\times \psi} \Delta,$$
where the action $i_n\times \psi$ of $\Delta$ on $\Z_2^n \times H$ is defined by:
\begin{itemize}
\item[$\bullet$] $(i_n \times \psi)(x_0).h\coloneq \psi(h)$ for every $h\in H$,
\item[$\bullet$] $(i_n \times \psi)(x_0).v\coloneq i_n(x_0).v=v^{-1}$ for every $v\in\Z_2^n$.
\end{itemize}
\end{rema}

Let us recall \cite[Theorem]{minac1986galois} (we also refer to~\cite[§$6.2$]{minac2021koszul}):

\begin{theo}\label{structure}
The class $\PP$ is exactly the minimal class of pro-$2$ groups satisfying the following conditions:
\begin{itemize}
\item[$(i)$] the group $\Delta$ is in $\PP$,
\item[$(ii)$] if $G_1, \dots, G_m$ are in $\PP$, then $G_1\coprod G_2\coprod \dots \coprod G_m$ is also in $\PP$,
\item[$(iii)$] if $n$ is a positive integer and $G_K$ is in $\PP$, then $\Z_2^n \rtimes_{i_n} G_K$ is in $\PP$.
\end{itemize}
\end{theo}

\begin{comment}
\begin{proof}[Explanation of the minimality]
Observe that $\Delta= \Gal(\C/\RR)=G_{\RR}$ is in $\PP$. In \cite[Theorem]{minac1986galois}, it was showed that the class $\PP$ satisfies conditions $(ii)$ and $(iii)$. For~$G_K$ in~$\PP$, we have the following alternative. Either:
\begin{itemize}
\item[$\bullet$] $G_K=\Delta$,
\item[$\bullet$] or there exists an integer $s>1$ and a family $G_1, \dots, G_s$ in $\PP$ such that $G_K=\coprod_{i=1}^s G_i$,
\item[$\bullet$] or, there exists an integer $n>0$ and a group $G$ in $\PP$ such that $G=\Z_2^n \rtimes_{i_n} G_K$. 
\end{itemize}
We conclude by strong induction on the number of generators of $G_K$.
\end{proof}
\end{comment}

\subsection{Structure of $G_L$}\label{struct results part2}
Theorem~\ref{semidirect} allows us to write~$G_K\simeq G_L \rtimes_{\psi_K} \Delta$. From Theorem~\ref{structure}, we study the structure of $G_L$. 

\subsubsection{Semi-direct products}
Let us consider the $\Delta$-RAAG defined by~$G_\bz\coloneq G_\Gamma \rtimes_{\delta_\bz}\Delta$. Take~$\Gamma^c$ the complete graph on $n$ vertices. We show that~$G\coloneq G_{\Gamma^c} \rtimes_{i_n}G_{\bz}$ is also~$\Delta$-RAAG.

\begin{prop}\label{rem semitriv2}
The pro-$2$ group~$G$ is~$\Delta$-RAAG. Precisely:
$$G \simeq G_{\Gamma \nabla \Gamma^c}\rtimes_{\delta_{\bz'}}\Delta,$$
where $\nabla$ denotes the join operation of graphs and $\bz'\coloneq (\bz, 1,\dots,1)$ in~$G_{\Gamma \nabla \Gamma^c}^{d_\Gamma+n}$
\end{prop}

\begin{proof}
From Remark~\ref{rem semitriv}, we infer a $(d_\Gamma+n)$-uplet $\bz'\coloneq (\bz, 1,\dots,1)$ in~$G_{\Gamma \nabla \Gamma^c}^{d_\Gamma+n}$ such that:
$$G\simeq \Z_2^n \rtimes_{i_n} (G_{\Gamma}\rtimes_{\delta_\bz }\Delta) \simeq \left(G_{\Gamma^c}\times G_\Gamma\right) \rtimes_{\delta_{\bz_0}\times \delta_\bz}\Delta\simeq G_{\Gamma \nabla \Gamma^c}\rtimes_{\delta_{\bz'}}\Delta.$$
The action $\delta_{\bz'}$ is well-defined since $\Z_2^n$ commutes with $G_\Gamma$ inside~$G_{\Gamma \nabla \Gamma^c}$.
\end{proof}

Assume there exists a group $G_{K_1}$ in $\PP$, and a positive integer~$n$ such that
$G_K\simeq \Z_2^n \rtimes_{i_n} G_{K_1}$. Let us compare $G_L$ and $G_{L_1}$:

\begin{coro}\label{kernel and semi-direct}
We have $G_{L}\simeq \Z_2^n \times G_{L_1}$. 
\end{coro}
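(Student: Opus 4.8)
The plan is to build the canonical quotient map $G_K \to \Delta$ out of the given semi-direct decomposition and then identify it with $\phi_K$ through its uniqueness. Since $G_{K_1}$ lies in $\PP$, I would fix an RPF field $K_1$ whose maximal pro-$2$ Galois group is $G_{K_1}$, put $L_1 \coloneq K_1(\sqrt{-1})$, and apply Theorem~\ref{semidirect} to write $G_{K_1} \simeq G_{L_1} \rtimes_{\psi_{K_1}} \Delta$, where $\phi_{K_1} \colon G_{K_1} \to \Delta$ has kernel $G_{L_1}$ and carries every involution to $x_0$. Composing the natural projection $G_K \simeq \Z_2^n \rtimes_{i_n} G_{K_1} \twoheadrightarrow G_{K_1}$ with $\phi_{K_1}$ yields a morphism $\phi \colon G_K \to \Delta$.

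First I would check that $\phi$ sends every involution of $G_K$ to $x_0$. Representing elements as pairs $(v,g)$ with $v \in \Z_2^n$ and $g \in G_{K_1}$, one has $(v,g)^2 = (v \cdot i_n(g)(v),\, g^2)$. If $(v,g)$ is an involution, then $g^2 = 1$; and if in addition $g = 1$, then $v^2 = 1$ forces $v = 1$ since $\Z_2^n$ is torsion-free, so $(v,g)$ is trivial. Hence every nontrivial involution $(v,g)$ has $g$ an involution of $G_{K_1}$, giving $\phi((v,g)) = \phi_{K_1}(g) = x_0$. By the uniqueness clause of Proposition~\ref{actioninv}, it follows that $\phi = \phi_K$, and therefore $G_L = \ker(\phi_K) = \ker(\phi)$.

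It remains to compute this kernel. By construction, $\ker(\phi)$ consists of the pairs $(v,g)$ with $g \in \ker(\phi_{K_1}) = G_{L_1}$, so $\ker(\phi) = \Z_2^n \rtimes_{i_n} G_{L_1}$. Since the semi-trivial action $i_n$ restricts to the trivial action on the factor $G_{L_1}$ (cf.~Remark~\ref{rem semitriv}), this semi-direct product collapses to the direct product $\Z_2^n \times G_{L_1}$, which gives $G_L \simeq \Z_2^n \times G_{L_1}$. I expect the main obstacle to be the identification $\phi = \phi_K$: the verification that involutions map to $x_0$ is elementary, but it is precisely the step through which the Galois-theoretic equality $\ker \phi_K = G_L$ is transported to the explicit group-theoretic description, so some care is needed to ensure no nontrivial involution is overlooked.
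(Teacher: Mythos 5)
Your proof is correct and takes essentially the same route as the paper's: both identify $G_L$ through the uniqueness statement of Proposition~\ref{actioninv}/Theorem~\ref{semidirect} (using that $\Z_2^n \times G_{L_1}$ contains no involutions, which is exactly your case analysis on $(v,g)^2=1$) and then use that the semi-trivial action is trivial on $G_{L_1}$ to turn the semi-direct product into a direct product. The only cosmetic difference is that the paper first regroups $G_K \simeq (\Z_2^n \times G_{L_1}) \rtimes_{\psi_{K_1}\times i_n} \Delta$ via Remark~\ref{rem semitriv} and reads off the kernel, whereas you compute the kernel of the composite map $\phi_{K_1}\circ\mathrm{pr}$ directly.
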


\begin{proof}
Remark \ref{rem semitriv} gives us:
$$G_K\simeq \Z_2^n \rtimes_{i_n} \left(G_{L_1}\rtimes_{\psi_{K_1}}\Delta\right) \simeq \left( \Z_2^n \times G_{L_1}\right)\rtimes_{\psi_{K_1}\times i_n}\Delta.$$ 
From Theorem \ref{semidirect}, the group $G_{L_1}$ does not have an involution, then $G_{L_1}\times \Z_2^n$ also does not. So by Theorem \ref{semidirect}, we conclude that $G_L\simeq G_{L_1}\times \Z_2^n$.
\end{proof}

\subsubsection{Coproducts}
Here we assume that there exists two fields $K_1$ and $K_2$ such that~$G_K\simeq G_{K_1}\coprod G_{K_2}$. We study $G_L$ from $G_{L_1}$ and $G_{L_2}$.

First, we need to introduce a technical result, which will also be useful later. Let~$\Gamma_1$ and~$\Gamma_2$ be two graphs. We denote by~$\{x_{1i},\dots, x_{1d_{\Gamma_i}}\}$ a canonical system of generators of the RAAG~$G_{\Gamma_i}$ for~$i\in \{1,2\}$. We denote by~$\Gamma_0$ the graph with one vertex, and~$z$ a generator of~$G_{\Gamma_0}\simeq \Z_2$.

\begin{theo}\label{technical coprod}
 %delta_{\bz_1}$ (resp. $\delta_{\bz_2}$) be an action of $\Delta\coloneq <x_{10}>$ (resp. $\Delta\coloneq <x_{20}>$) on a 
%$$\delta_1(x_i)=(x_i^{-1})^{z_{1i}}, \quad \text{resp} \quad \delta_2(y_i)\coloneq (y_i^{-1})^{z_{2i}}.$$ 
%There exists a graph~$\Gamma$ with $d_1+d_2+1$ vertices and a vector $\bz\in G_\Gamma^{d_1+d_2+1}$ such that we have a well-defined~$\Delta$-RAAG $G_{\bz}$ satisfying:
%$\delta_{\bz}$ defines an action on $G_\Gamma$ and 
%$$G_{\bz}\simeq G_{\bz_1}\coprod G_{\bz_2}.$$
The class of $\Delta$-RAAGs is stable under finite coproducts. 
\\Precisely, we have
% let $G_{\Gamma_1}$ and $G_{\Gamma_2}$ be two pro-$2$ RAAGs generated by~$\{x_{1i}\}_{i=1}^{d_1}$ and~$\{x_{2j}\}_{j=1}^{d_2}$. For $i\in \{1;2\}$, we consider a vector $\bz_i\coloneq (z_{i1}; \dots; z_{id_i})$ in $G_{\Gamma_i}^{d_i}$ such that the~$\Delta$-RAAG $G_{\bz_i}\coloneq G_{\Gamma_i}\rtimes_{\delta_{\bz_i}}\Delta$ is well defined.
$$\left( G_{\Gamma_1}\rtimes_{\delta_{\bz_1}} \langle x_{10}\rangle \right) \coprod \left(  G_{\Gamma_2} \rtimes_{\delta_{\bz_2}} \langle x_{20} \rangle \right) \simeq  G_{\Gamma_1 \coprod \Gamma_2 \coprod \Gamma_0} \rtimes_{\delta_{\bz}}\langle x_{10}\rangle.$$
Through the previous isomorphism, $z$ is sent to $x_{10}x_{20}$. A (canonical) system of generators of the pro-$2$ RAAG~$G_{\Gamma_1 \coprod \Gamma_2 \coprod \Gamma_0}$ is~$\{x_{11},\dots ,x_{1d_{\Gamma_1}}, x_{21},\dots, x_{2d_{\Gamma_2}}, z\}$ and the vector $\bz$ is given by:
$$\bz\coloneq (z_{11}, \dots, z_{1d_{\Gamma_1}}, z\times z_{21}, \dots, z\times z_{2d_{\Gamma_2}},1)\in (G_{\Gamma_1\coprod \Gamma_2 \coprod \Gamma_0})^{d_{\Gamma_1}+d_{\Gamma_2}+1}.$$
The vector~$\bz$ defines an action $\delta_{\bz}\colon \Delta\to G_{\Gamma}$ satisfying~\eqref{conj}.
\begin{comment}
\left( G_{\Gamma_1}\coprod G_{\Gamma_2} \coprod \Z_2 \right) \rtimes_{\delta_{\bz_3}}<x_{10}>
the group $B^z$ is the group generated $y_j^z$, and $\delta_1 \ast \delta_2$ is an action of $\Delta$ on $A \coprod B^z \coprod \Z_2$ which
\begin{itemize}
\item[$\bullet$] acts on $A$ by $\delta_1$,
\item[$\bullet$] acts on $B^z$ by $\delta_2^z \colon B^z \to B;  y_i^z \to \delta_2(y_i)$,
\item[$\bullet$] and acts on $\Z_2$ by inversion.
\end{itemize}
\end{comment}
\end{theo}

Theorem \ref{technical coprod} is a consequence of \cite[Theorem $(4.2.1)$]{NSW}, which is a profinite version of the Kurosh subgroup Theorem:

\begin{theo}
Let $G_1,\dots , G_n$ be a collection of finitely generated pro-$p$ groups. Assume that~$G=\coprod_{i=1}^n G_i$ and $H$ is an open subgroup of $G$. Define $S_i\coloneq \bigcup_{j=1}^{n_i} \{s_{i,j}\}$, where~$1\leq i \leq n$ and $n_i=|S_i|$, a system of coset representatives satisfying for every~$1\leq i \leq n$:
$$G= \bigcup_{j=1}^{n_i} G_i s_{i,j}H.$$
Then $$H=\coprod_{i=1}^n \left( \coprod_{j=1}^{n_i} G_i^{s_{i,j}}\cap H \right) \coprod F(d),$$
where $F(d)$ is a free pro-$p$ group of rank $$d=\sum_{i=1}^n \left( [G:H]-n_i\right)-[G:H]+1, \quad \text{and} \quad G_i^{s_{i,j}}=s_{i,j} G_i s_{i,j}^{-1}.$$
\end{theo}

Let us now prove Theorem \ref{technical coprod}
\begin{proof}[Proof Theorem \ref{technical coprod}]
%Let $\{a_i\}$ (resp. $\{b_i\}$) be a minimal system of generators of $A$ (resp. $B$). Take $x_A$ (resp. $x_B$) an element of order $2$ in $A\rtimes_f \Delta$ (resp. $B\rtimes_g \Delta$).
We write $G\coloneq \left( G_{\Gamma_1}\rtimes_{\delta_{\bz_1}} \langle x_{10}\rangle \right) \coprod \left(  G_{\Gamma_2} \rtimes_{\delta_{\bz_2}} \langle x_{20} \rangle \right)$, and we define a map $\phi \colon G \to \Delta$ by:
\begin{itemize}
\item[$\bullet$] $\phi(x)=0$ for every $x\in G_{\Gamma_1}$,
\item[$\bullet$] $\phi(y)=0$ for every $y\in G_{\Gamma_2}$,
\item[$\bullet$] $\phi(x_{10})=x_{10}$ and $\phi(x_{20})=x_{10}$.
\end{itemize} 

We introduce $H\coloneq \ker(\phi)$. This subgroup is closed and of index $2$ in~$G$. So $H$ is normal and open in~$G$.
We apply Theorem \cite[Theorem $(4.2.1)$]{NSW} by taking $G_1\coloneq G_{\bz_1}$, $G_{2}\coloneq G_{\bz_2}$ and~$S_1=S_2\coloneq \{1\}$. 

The groups $G_1$ and $G_2$ are not included in $H$. Thus $G=G_1 \times S_1 \times H= G_2\times S_2 \times H$.
Note that $G\coloneq G_1\coprod G_2$. Moreover $S_1$ and $S_2$ satisfy the hypothesis of Theorem~\cite[Theorem $(4.2.1)$]{NSW}. Additionally, $G_1\cap H=G_{\Gamma_1}$ and $G_2\cap H=G_{\Gamma_2}$. Therefore from Theorem~\cite[Theorem $(4.2.1)$]{NSW}, we conclude that:
$$H\simeq G_{\Gamma_1}\coprod G_{\Gamma_2} \coprod G_{\Gamma_0} \simeq G_{\Gamma_1\coprod \Gamma_2 \coprod \Gamma_0}.$$
Through the previous isomorphism, the element~$z$ is sent to~$x_{10}x_{20}$. The map $\psi \colon \Delta \to G$, which maps $x_{10}$ to $x_{10}$ defines a section of $\phi$, and induces an action of~$\Delta\coloneq \langle x_{10}\rangle$ on~$H$. This action is precisely defined by $\delta_{\bz}$.
\end{proof}
\begin{rema}\label{technical coprod2}
More generally, from \cite[Theorem $(4.2.1)$]{NSW}, we deduce the following result. Let $\delta_1$ (resp. $\delta_2$) be an action of $\Delta\coloneq \langle x_0\rangle$ (resp. $\Delta\coloneq \langle y_0\rangle$) on a pro-$2$ group~$A$ (resp. $B$) generated by $x_i$ (resp. $y_j$).
%$$\delta_1(x_i)=(x_i^{-1})^{z_{1i}}, \quad \text{resp} \quad \delta_2(y_i)\coloneq (y_i^{-1})^{z_{2i}}.$$ 
Then
$$\left( A\rtimes_{\delta_1} \langle x_0\rangle \right) \coprod \left(  B \rtimes_{\delta_2} \langle y_0\rangle \right) \simeq \left( A\coprod B \coprod G_{\Gamma_0} \right) \rtimes_{\delta_1 \ast \delta_2}\langle x_0\rangle.$$
A generator $z$ of~$G_{\Gamma_0}$ is sent to $x_{0}y_{0}$ through the previous isomorphisms. The action $\delta_1 \ast \delta_2$ of $\Delta$ on $A \coprod B \coprod G_{\Gamma_0}$ is defined:
\begin{itemize}
\item[$\bullet$] on~$A$ by~$\delta_1$,
\item[$\bullet$] on $B$ by $\delta_2^z \colon B \to B^z;  y_i \to \delta_2(y_i)^z$, where $B^z$ is the group generated~$y_j^z$,
\item[$\bullet$] on $G_{\Gamma_0}$ by inversion.
\end{itemize}
\end{rema}

\begin{comment} Let us describe the structure of $H$. Precisely, we will show that $H$ is generated by $a_i, b_j$ and $z\coloneq x_Ax_B$, then using Theorem~\cite[Theorem $(4.2.1)$]{NSW}, we will conclude that $H=A\coprod B \coprod \Z_2$, where $\Z_2$ is generated by $x_Ax_B$.

First, let us show that $H$ is generated by $A$, $B$ and $Z\coloneq \overline{z}$. Clearly $A$, $B$ and $Z$ are subsets of $H$. Let us now show that they indeed generate $H$.
\\Let $h$ be an element in $H$, let us show that we can express $h$ as a product of elements in $A$, $B$ and $Z$. Since $h$ is in $G$, then up to an open subgroup, we can write:
$$h\coloneq \prod_{k=1}^n \left( a_{i_k}^{\alpha_{i_k}}x_A^{\beta_{i_k}}b_{i_k}^{\gamma_{i_k}}x_B^{\delta_{i_k}}\right),$$
where $\alpha_{i_k}, \gamma_{i_k}$ are integers and $\beta_{i_k}, \delta_{i_k}$ are in $\{0,1\}$. Furthermore
$$\phi(h)=x_0^{\sum_k \beta_{i_k}+\delta_{i_k}}=1.$$
Denote by $\mu$ (resp. $\nu$) the number of nonzero $\beta_{i_k}$ (resp. $\delta_{i_k}$). Then $\mu$ and $\nu$ have same parity.
\\Since $\lbrack H,H\rbrack \subset H^2$, we observe that $\lbrack x_B,B\rbrack \subset H^2$, indeed 
$$\lbrack x_B,b\rbrack =x_B b x_B b^{-1}\equiv b^{-1}b^{-1}\equiv b^{-2}\equiv 0 \pmod{H^2}.$$
Similarly $\lbrack x_A,A\rbrack \subset H^2$. Finally $x_Bx_A=x_A x_Ax_Bx_A$
\\Consequently $h$ can be expressed as a product of elements in $A$, $B$ and $Z$ modulo $H^2$.
Consequently  $$h\equiv \prod_k \left( a_{i_k}^{\alpha_{i_k}}b_{i_k}^{\gamma_{i_k}}(x_1x_2)^\epsilon\right) \pmod H^2,$$
where 
\end{comment}

We now deduce results on the structure of $G_L$.

\begin{coro}\label{kernel and coproduct}
We have $G_L\simeq G_{L_1}\coprod G_{L_2}\coprod G_{\Gamma_0}.$
\end{coro}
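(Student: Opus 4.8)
The goal is to identify the kernel $G_L$ inside $G_K \simeq G_{K_1} \coprod G_{K_2}$, where $G_L = \ker(\phi_K)$ and $\phi_K \colon G_K \to \Delta$ is the morphism sending every involution to $x_0$ (Theorem~\ref{semidirect}). The plan is to reduce this statement to the purely group-theoretic Theorem~\ref{technical coprod}. First I would write $G_{K_i} \simeq G_{L_i} \rtimes_{\psi_{K_i}} \Delta$ using Theorem~\ref{semidirect}, so that each factor is presented as a semi-direct product of its own kernel by a copy of $\Delta$ generated by an involution $x_{i0}$. This brings $G_K$ into exactly the shape appearing on the left-hand side of Theorem~\ref{technical coprod}:
\begin{equation*}
G_K \simeq \left( G_{L_1} \rtimes_{\psi_{K_1}} \langle x_{10} \rangle \right) \coprod \left( G_{L_2} \rtimes_{\psi_{K_2}} \langle x_{20} \rangle \right).
\end{equation*}

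The key step is to check that the morphism $\phi_K$ of Theorem~\ref{semidirect} agrees with the auxiliary morphism $\phi \colon G \to \Delta$ constructed in the proof of Theorem~\ref{technical coprod}, namely the map sending $G_{L_1}$ and $G_{L_2}$ to $1$ and both $x_{10}, x_{20}$ to $x_{10}$. By the uniqueness clause of Theorem~\ref{semidirect}~$(i)$, it suffices to verify that $\phi$ sends every involution of $G_K$ to $x_0$; I would argue that any involution lies in a conjugate of one of the two factors (the coproduct has no new torsion by Artin--Schreier together with the Kurosh-type description), where it must map to the generator of $\Delta$ under $\phi_{K_i}$, hence to $x_{10}$ under $\phi$. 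Granting this identification, $G_L = \ker(\phi_K) = \ker(\phi) = H$, which is precisely the open index-$2$ subgroup computed in Theorem~\ref{technical coprod}. That theorem then yields directly
\begin{equation*}
G_L = H \simeq G_{L_1} \coprod G_{L_2} \coprod G_{\Gamma_0},
\end{equation*}
since $G_{L_i} = G_{\Gamma_i} \cap H$ plays the role of the RAAG factor and the extra free pro-$2$ rank-one factor $G_{\Gamma_0} \simeq \Z_2$ is generated by $z = x_{10}x_{20}$.

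The main obstacle is the identification $\phi_K = \phi$, which rests on controlling the involutions of a coproduct $G_{K_1} \coprod G_{K_2}$. The clean way to handle this is to invoke the profinite Kurosh subgroup theorem structure already in play: every finite subgroup of a free pro-$2$ product is conjugate into one of the free factors, so each involution of $G_K$ is conjugate to an involution of some $G_{K_i}$, where Theorem~\ref{semidirect}~$(i)$ applied to $K_i$ forces it to map to $x_0$ under $\phi_{K_i}$. Once this is in place the rest is bookkeeping, and the precise form of the tuple $\bz$ governing the resulting $\Delta$-RAAG structure on $G_L \rtimes \Delta \simeq G_K$ is supplied verbatim by Theorem~\ref{technical coprod}.
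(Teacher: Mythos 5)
Your proposal is correct and follows essentially the same route as the paper: write each factor as $G_{L_i}\rtimes_{\psi_{K_i}}\Delta$ via Theorem~\ref{semidirect}, identify the index-$2$ kernel by the Kurosh-based coproduct decomposition, and pin down $G_L$ through the uniqueness of the involution-detecting morphism (your "every involution maps to $x_0$ under $\phi$" is the same argument as the paper's "the complement $G_{L_1}\coprod G_{L_2}\coprod G_{\Gamma_0}$ contains no involutions"). One refinement: at this stage $G_{L_1}$ and $G_{L_2}$ are not yet known to be RAAGs, nor are the actions $\psi_{K_i}$ known to satisfy~\eqref{conj} --- that is precisely what Theorem~\ref{mainpyt} is in the process of establishing --- so you cannot literally invoke Theorem~\ref{technical coprod}; you should cite Remark~\ref{technical coprod2} instead, which states the same Kurosh computation for arbitrary pro-$2$ groups $A$, $B$ with arbitrary $\Delta$-actions, and is what the paper uses. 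Your underlying argument (applying the profinite Kurosh subgroup theorem to $\ker(\phi)$) is unaffected, since it never uses the RAAG structure of the factors.
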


\begin{proof}
Theorem~\ref{semidirect} gives us~$G_K\simeq G_L\rtimes_{\psi_K} \Delta$. Remark \ref{technical coprod2} allows us to conclude that~$G_K\simeq \left( G_{L_1} \coprod G_{L_2} \coprod G_{\Gamma_0} \right)\rtimes_{\psi_{K_1}\ast \psi_{K_2}}\Delta$. We observe that~$G_{L_1}$, $G_{L_2}$ and $G_{\Gamma_0}$ do not contain involutions. Thus~$G_{L_1} \coprod G_{L_2} \coprod G_{\Gamma_0}$ does not contain involutions.  Therefore, we conclude from Theorem \ref{semidirect} that~$G_{L}\simeq G_{L_1} \coprod G_{L_2} \coprod G_{\Gamma_0}$. 
\end{proof}

\subsection{Conclusion}
We are now able to prove~Theorem \ref{mainpyt}.

%\begin{theo}\label{PYT are RAAGs}
%If $K$ is a RPF field, then $G_K$ is $\Delta$-RAAG. Furthermore, let $K_1$ and $K_2$ be two RPF fields with underlying fields~$\Gamma_1$ and~$\Gamma_2$. If $G_{K_1}\simeq G_{K_2}$ then $\Gamma_1\simeq \Gamma_2$. 
%where $K_1$ and $K_2$ are RPF fields, and $\Gamma_1$ (resp. $\Gamma_2$) is the underlying graph of the $\Delta$-RAAG $G_{K_1}$ (resp. $G_{K_2}$).
%\end{theo}

\begin{proof}[Proof Theorem \ref{mainpyt}]
We show~$(ii)$ implies~$(i)$.
If $G_K$ is $\Delta$-RAAG then, from Theorem~\ref{semidirect}, the actions $\psi_K$ and~$\delta_\bz$ coincide. Indeed $G_{\Gamma}$ does not have an involution, so $G_L=G_\Gamma$ also does not. Proposition~\ref{rem semitriv2} and Theorem \ref{technical coprod} show that the class of $\Delta$-RAAGs satisfies the conditions of Theorem \ref{structure}. Thus, by Theorem \ref{structure}, we conclude that $\PP$ is a subclass of~$\Delta$-RAAGs.

Conversely, let us show~$(i)$ implies~$(ii)$. Assume that there exists a graph $\Gamma$ and an element $\bz$ in $G_\Gamma^{d_\Gamma}$ such that~$G_K\simeq G_{\bz}$. Let $K^{(3)}$ be the compositum of all Galois extensions of degree dividing~$4$ of~$K$. From \cite[Proposition $2.1$]{minac1996witt}, we infer that $G_{K,3}=G_{K^{(3)}}$. From \cite[Theorems~$2.7$ and~$2.11$]{minac1990formally} and Corollary \ref{Delta and Pyt}, we deduce that $K$ is RPF.

The last part of our result is given by~Corollary~\ref{graph iso}.
%To show the last part of our result, we first observe, by Theorem \ref{semidirect}, that $G_{K_1}\simeq G_{K_2}$ implies that $G_{\Gamma_1}\simeq G_{\Gamma_2}$. From \cite[Proposition $3.4$]{hamza2023extensions}, we conclude that $\E(\Gamma_1)\simeq \E(\Gamma_2)$ and $H^\bullet(G_{\Gamma_1})\simeq H^\bullet(G_{\Gamma_2})$. Then from \cite{kim} or \cite[Corollary $2.14$]{Tra}, we conclude that~$\Gamma_1\simeq \Gamma_2$.
\end{proof}

\begin{exem}\label{intermexam}
Let us give a few examples.

$\bullet$ For every integer $d_\Gamma$, the groups $G_K\coloneq G_{\Gamma^c} \rtimes_{\delta_0}\Delta$ (superpythagorean) and $G_K\coloneq G_{\Gamma^f}\rtimes_{\delta_0} \Delta$ (SAP) are in $\PP$.

$\bullet$ From Theorem \ref{structure}, the group $G_{\bz_0}$ is in $\PP$ for
every graph $\Gamma$ with at most $3$ vertices.
%groups in $\PP$ with $5$ generators are exactly described by:
%$$\Z_2^4\rtimes_{\delta} \Delta, \quad F_4\rtimes_{\delta}\Delta, \quad \left(\Z_2\times F_3\right)\rtimes_{\delta}\Delta, \quad \left(\Z_2^2\times F_2\right)\rtimes_{\delta}\Delta.$$

$\bullet$ The following graph $\Gamma$, described by four vertices and two disjoint edges, gives the group $G\coloneq G_{\Gamma}\rtimes_{\delta_{\bz_0}}\Delta \simeq \left( \Z_2^2\coprod \Z_2^2\right) \rtimes_{\delta_{\bz_0}}\Delta$:

\centering{
\begin{tikzcd}
x_1 \arrow[dd, no head] &  & x_3 \arrow[dd, no head] \\
                            &  &                             \\
x_2                     &  & x_4                    
\end{tikzcd}}

\justifying
From the results in the subsection \ref{struct results part2}, the previous group is not in $\PP$. However, \cite[Theorem $2$]{snopce2022right} shows that the group $G_{\Gamma}$ is realizable as a group $G_L$, for some field~$L$ containing a square root of $-1$.
\end{exem}

\begin{rema}
If $K$ is a RPF field, then the group~$G_K$ is uniquely determined by an underlying graph~$\Gamma$ and an element $\bz$ in $G_\Gamma^{d_\Gamma}$.
In the next section, we prove a result stronger than Theorem~\ref{mainpyt}: Theorem~\ref{Graph and Pyt}. This Theorem shows that the element~$\bz$ in~$G_\Gamma^{d_\Gamma}$ is uniquely determined by the graph $\Gamma$ for RPF fields. 
\end{rema}
%Concretely, let $K_1$ and~$K_2$ be two RPF fields. We show the following result. If~$G_{L_1}\simeq G_{L_2}$, then $G_{K_1}\simeq G_{K_2}$. 

\section{$\Delta$-RAAG theory and Pythagorean fields}
This section investigates~$Gr(\PP)$. This is the class of graphs which are underlying graphs of groups in~$\PP$. Let~$\Gamma$ be in $Gr(\PP)$. From~\cite[Theorem $1.2$]{snopce2022right}, the graph~$\Gamma$ does not contain as subgraphs the square graph~$C_4$ and the following line~$L_3$:

\centering

\begin{tikzcd}
i_1 \arrow[rr, no head] &  & i_2 \arrow[rr, no head] &  & i_3 \arrow[rr, no head] &  & i_4
\end{tikzcd}

\justifying 
It is also shown that $H^\bullet(G_\Gamma)$ is (universally) Koszul, and $G_\Gamma$ is absolutely torsion-free. However, this condition is not sufficient to characterize~$Gr(\PP)$. We refer to Example~\ref{intermexam}.

%\emph{Question: Is the algebra $\E_{\Gamma,\Delta}$ Koszul for every graph $\Gamma$?}

\subsection{Structure of~$Gr(\PP)$}
We start this section by giving examples of graphs in~$Gr(\PP)$.

\begin{coro}
Let $(n,m)$ be a couple of positive integers. We define $\Gamma^{cn}$ the complete graph on $n$ vertices, and $\Gamma^{fm}$ the free graph on $m$ vertices. Then~$\Gamma^{cn} \coprod \Gamma^{fm}$ and~$\Gamma^{cn} \nabla \Gamma^{fm}$ are both in~$Gr(\PP)$.

Precisely, the following groups are realizable as groups in $\PP$:
\begin{itemize}
\item[$(i)$] $G_{(\Gamma^{cn} \coprod \Gamma^{fm}), \bz_0}$, 
\item[$(ii)$] $G_{(\Gamma^{cn} \nabla \Gamma^{fm}), \bz_0}$, where $\nabla$ denotes the join of two graphs.
\end{itemize}
\end{coro}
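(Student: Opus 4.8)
The plan is to realize both graphs as members of $Gr(\PP)$ by exhibiting the corresponding $\Delta$-RAAGs as groups in $\PP$, using the closure properties of $\PP$ established in Theorem~\ref{structure} together with the explicit coproduct and semi-direct product descriptions from Proposition~\ref{rem semitriv2} and Theorem~\ref{technical coprod}. Recall that $G_{\Gamma^{cn}} \simeq \Z_2^n$ (a complete graph yields a commutative RAAG) and $G_{\Gamma^{fm}} \simeq F(m)$ (a free graph yields a free pro-$2$ group), and that with the trivial vector $\bz_0 = (1,\dots,1)$ the associated $\Delta$-RAAGs are precisely the superpythagorean group $\Z_2^n \rtimes_{\delta_{\bz_0}} \Delta$ and the SAP group $F(m) \rtimes_{\delta_{\bz_0}} \Delta$. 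Both of these already lie in $\PP$: the superpythagorean case follows from repeated application of condition~$(iii)$ of Theorem~\ref{structure} starting from $\Delta$, while the SAP group is a coproduct $\coprod_{i=1}^{m+1}\Delta$ by the observation in the $\delta_{\bz_0}$-action subsection, hence lies in $\PP$ by condition~$(ii)$.

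For part~$(ii)$, the join $\Gamma^{cn}\nabla\Gamma^{fm}$, I would use Proposition~\ref{rem semitriv2}. That proposition shows that applying a semi-trivial semi-direct product $\Z_2^n \rtimes_{i_n}(-)$ to a $\Delta$-RAAG $G_{\Gamma}\rtimes_{\delta_{\bz}}\Delta$ produces the $\Delta$-RAAG attached to the join $\Gamma\nabla\Gamma^c$ with extended vector $\bz' = (\bz,1,\dots,1)$. Taking $G_\Gamma = G_{\Gamma^{fm}} = F(m)$ with $\bz = \bz_0$, so that $G_\Gamma\rtimes_{\delta_{\bz_0}}\Delta$ is the SAP group already known to be in $\PP$, and applying $\Z_2^n\rtimes_{i_n}(-)$, we obtain $G_{(\Gamma^{fm}\nabla\Gamma^{cn}),\bz_0}$. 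Since $\Gamma^{fm}\nabla\Gamma^{cn}\simeq\Gamma^{cn}\nabla\Gamma^{fm}$ and condition~$(iii)$ of Theorem~\ref{structure} guarantees that this semi-direct product remains in $\PP$, part~$(ii)$ follows.

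For part~$(i)$, the disjoint union $\Gamma^{cn}\coprod\Gamma^{fm}$, I would instead invoke condition~$(ii)$ of Theorem~\ref{structure} together with Theorem~\ref{technical coprod}. The coproduct of the superpythagorean group on $\Gamma^{cn}$ and the SAP group on $\Gamma^{fm}$ — both of which are in $\PP$ — is again in $\PP$ by stability under coproducts. By Theorem~\ref{technical coprod}, this coproduct is itself a $\Delta$-RAAG whose underlying graph is $\Gamma^{cn}\coprod\Gamma^{fm}\coprod\Gamma_0$; here the extra vertex $\Gamma_0$ carries the generator $z = x_{10}x_{20}$ corresponding to the product of the two copies of $x_0$. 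The minor subtlety is that the Kurosh-theoretic description introduces this extra free factor $\Gamma_0$, so one must check that the claimed graph $\Gamma^{cn}\coprod\Gamma^{fm}$ (without the extra vertex) still lies in $Gr(\PP)$; but since $\Gamma^{fm}\coprod\Gamma_0\simeq\Gamma^{f(m+1)}$ is again a free graph, the underlying graph is $\Gamma^{cn}\coprod\Gamma^{f(m+1)}$, which is of exactly the same form, and the statement is recovered after reindexing $m\mapsto m+1$.

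I expect the only genuine obstacle to be bookkeeping: correctly matching the semi-direct-product and coproduct constructions to the join and disjoint-union operations on graphs, and tracking the extra free generator produced by Theorem~\ref{technical coprod} so that the final underlying graph is read off correctly. The core logic is a direct application of the three closure properties of Theorem~\ref{structure} to the building blocks $\Z_2^n\rtimes_{\delta_{\bz_0}}\Delta$ and $F(m)\rtimes_{\delta_{\bz_0}}\Delta$, both already identified as superpythagorean and SAP groups in $\PP$.
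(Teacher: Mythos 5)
Your proposal follows essentially the same route as the paper: both parts reduce to the closure properties of Theorem~\ref{structure} applied to the same two building blocks (the superpythagorean group $G_{\Gamma^{cn},\bz_0}$ and the SAP group $G_{\Gamma^{fm},\bz_0}\simeq\Delta\coprod\dots\coprod\Delta$), with Proposition~\ref{rem semitriv2} handling the join and Theorem~\ref{technical coprod} handling the disjoint union. Your treatment of part $(ii)$ is identical to the paper's.

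There is one small bookkeeping defect in part $(i)$: coproducting the superpythagorean group with the SAP group on $m$ vertices produces, by Theorem~\ref{technical coprod}, the underlying graph $\Gamma^{cn}\coprod\Gamma^{fm}\coprod\Gamma_0\simeq\Gamma^{cn}\coprod\Gamma^{f(m+1)}$, so after your reindexing $m\mapsto m+1$ you have only established the statement for $m\geq 2$; the case $m=1$ is not covered by the construction as written. The fix is exactly what the paper does: coproduct the superpythagorean group with bare copies of $\Delta$ (equivalently, the SAP group on zero vertices, namely $G_{\RR}$), one copy per desired isolated vertex. Each coproduct with $\Delta$ adds precisely one isolated vertex, so $m$ copies give
$$G_{(\Gamma^{cn}\coprod\Gamma^{fm}),\bz_0}\simeq \Delta\coprod\dots\coprod\Delta\coprod G_{\Gamma^{cn},\bz_0},$$
which lies in $\PP$ by condition $(ii)$ of Theorem~\ref{structure}, handling all $m\geq 1$ uniformly and avoiding the off-by-one entirely.
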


\begin{proof}
The groups $G_{\Gamma^{cn}, \bz_0}$ and $G_{\Gamma^{fm}, \bz_0}\simeq \Delta\coprod \dots \coprod \Delta$ are realizable as pro-$2$ quotients of absolute Galois groups over superpythagorean and SAP fields. Observe from Theorem~\ref{technical coprod} and Proposition~\ref{rem semitriv2} that:
$$G_{(\Gamma^{cn} \coprod \Gamma^{fm}), \bz_0}\simeq \Delta \coprod \dots \coprod \Delta \coprod G_{\Gamma^{cn}, \bz_0 } , \quad \text{and} \quad G_{(\Gamma^{cn} \nabla \Gamma^{fm}), \bz_0}=(G_{\Gamma^{cn}}\times G_{\Gamma^{fm}}) \rtimes_{i_n\times \delta_{\bz_0'}} \Delta,$$
with~$\bz_0'\coloneq (1,\dots, 1)\in G_{\Gamma^{fm}}^m$.
We conclude using Theorem \ref{structure}.
\end{proof}

We now study the structure of~$Gr(\PP)$. Consider $\Gamma$ in~$Gr(\PP)$. We denote by $m_0$ and~$m$ the number of isolated vertices and nontrivial connected components of $\Gamma$. We introduce~$\Gamma_i$, the nontrivial connected components of~$\Gamma$, with $1\leq i \leq m$. Then from Corollary~\ref{kernel and coproduct}, we can write:
$$\Gamma\coloneq \coprod_{i=1}^m \Gamma_i \coprod_{j=1}^{m_0}\Gamma_0,$$
with~$\Gamma_0$ a graph with one vertex. This gives us:
$G_\Gamma \simeq \coprod_{i=1}^m G_{\Gamma_i} \coprod_{j=1}^{m_0}G_{\Gamma_0}.$

\begin{lemm}\label{disconnected graph}
Assume that~$\Gamma$ is in $Gr(\PP)$. Then for every~$1\leq i \leq l$, the graph~$\Gamma_i$ is in~$Gr(\PP)$. Additionally~$m_0\geq m-1$. 

Concretely, there exists RPF fields $K_i$ with underlying graphs~$\Gamma_i$, for $1\leq i \leq m$, such that:
$$G_K\simeq \coprod_{i=1}^m G_{K_i}\coprod_{j=1}^{m_0-m+1}G_{\Gamma_0},$$
where $K$ is in~$\PP$ and has underlying graph~$\Gamma$.
\end{lemm}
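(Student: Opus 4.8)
The plan is to argue by strong induction on the number of vertices $d_\Gamma$, feeding $G_K\in\PP$ into the trichotomy of Theorem~\ref{structure}: either $G_K=\Delta$, or $G_K$ is a coproduct $\coprod_{a=1}^s G_{K^{(a)}}$ of $s\geq 2$ groups of $\PP$, or $G_K\simeq \Z_2^n\rtimes_{i_n}G_{K'}$ with $n\geq 1$ and $G_{K'}\in\PP$. In each case I translate the structure of $G_K$ into the connected-component structure of the underlying graph $\Gamma$ of $G_L$, using Corollaries~\ref{kernel and semi-direct} and~\ref{kernel and coproduct}, and I track the two counters $m$ (nontrivial components) and $m_0$ (isolated vertices) together with the promised coproduct decomposition.

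The base and semidirect cases produce a \emph{connected} graph and are handled directly. If $G_K=\Delta$ then $G_L=1$, so $\Gamma$ is empty, $m=m_0=0$, and the statement reads $G_K=\Delta=\coprod_{j=1}^{1}G_{\Gamma_0}$ with $G_{\Gamma_0}=\Delta=G_{\RR}$. If $G_K\simeq \Z_2^n\rtimes_{i_n}G_{K'}$, then Corollary~\ref{kernel and semi-direct} gives $G_L\simeq\Z_2^n\times G_{L'}$, hence $\Gamma\simeq\Gamma^{c}\nabla\Gamma'$ is the join of the nonempty complete graph $\Gamma^{c}$ on $n$ vertices with $\Gamma'$, and such a join is connected. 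Thus either $\Gamma$ is a single nontrivial component (when $d_\Gamma\geq 2$), giving $m=1$, $m_0=0$ and the trivial decomposition $G_K=G_{K_1}$; or $\Gamma$ is one isolated vertex (when $n=1$ and $\Gamma'$ is empty), giving $m=0$, $m_0=1$ and $G_K\simeq\Z_2\rtimes_{\delta_{\bz_0}}\Delta\simeq\coprod_{i=1}^{2}\Delta=\coprod_{j=1}^{2}G_{\Gamma_0}$ by the identity $\coprod_{i=1}^{d_\Gamma+1}\Delta\simeq F(d_\Gamma)\rtimes_{\delta_{\bz_0}}\Delta$ recorded earlier (with $d_\Gamma=1$). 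In all these cases $m_0\geq m-1$ and the formula hold, and the only nontrivial component (if any) is $\Gamma$ itself, which lies in $Gr(\PP)$ by hypothesis.

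The inductive heart is the coproduct case. Writing $G_K=\coprod_{a=1}^s G_{K^{(a)}}$ with $s\geq 2$, iterating Corollary~\ref{kernel and coproduct} yields $G_L\simeq\coprod_{a=1}^s G_{L^{(a)}}\coprod_{j=1}^{s-1}\Z_2$, i.e. on graphs $\Gamma\simeq\coprod_{a=1}^s\Gamma^{(a)}\coprod_{j=1}^{s-1}\Gamma_0$. In particular $d_{\Gamma^{(a)}}\leq d_\Gamma-(s-1)<d_\Gamma$, so the induction hypothesis applies to each $G_{K^{(a)}}\in\PP$. Denoting by $(m^{(a)},m_0^{(a)})$ the counters of $\Gamma^{(a)}$, the nontrivial components of $\Gamma$ are exactly those of the $\Gamma^{(a)}$, whence $m=\sum_a m^{(a)}$ and every $\Gamma_i\in Gr(\PP)$ by induction; the isolated vertices are those of the $\Gamma^{(a)}$ together with the $s-1$ new free factors, so $m_0=\sum_a m_0^{(a)}+(s-1)$. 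From $m_0^{(a)}\geq m^{(a)}-1$ I then obtain $m_0\geq\sum_a(m^{(a)}-1)+(s-1)=m-1$. Finally, substituting the inductive decompositions $G_{K^{(a)}}\simeq\coprod_i G_{K^{(a)}_i}\coprod_{j=1}^{m_0^{(a)}-m^{(a)}+1}G_{\Gamma_0}$ and using associativity of the coproduct gives $G_K\simeq\coprod_{i=1}^m G_{K_i}\coprod_{j=1}^{N}G_{\Gamma_0}$ with $N=\sum_a(m_0^{(a)}-m^{(a)}+1)=\bigl(\sum_a m_0^{(a)}+s-1\bigr)-m+1=m_0-m+1$, as required.

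The main obstacle is the isolated-vertex bookkeeping rather than any single hard computation: one must keep separate the isolated vertices already present inside each $\Gamma^{(a)}$ from the $s-1$ \emph{new} free ($\Z_2$) factors that the profinite Kurosh theorem (through Corollary~\ref{kernel and coproduct}) forces on $G_L$ each time $G_K$ is split as a coproduct, and verify that these extra vertices are precisely what makes the affine count $m_0-m+1$ additive under coproduct and forces $m_0\geq m-1$. A secondary point to pin down is the identification $\Z_2\rtimes_{\delta_{\bz_0}}\Delta\simeq\Delta\coprod\Delta$ underlying the single-vertex base case, which also fixes the building block $G_{\Gamma_0}$ occurring in the conclusion as the copy of $\Delta=G_{\RR}$ attached to an isolated vertex.
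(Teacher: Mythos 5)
Your proof is correct, and it rests on the same ingredients as the paper's own argument: the recursive description of $\PP$ (Theorem~\ref{structure}), Corollaries~\ref{kernel and semi-direct} and~\ref{kernel and coproduct}, and Lemma~\ref{Graph identification}. The organization, however, differs in a way that matters. The paper does not run an induction over the full trichotomy; it treats only the disconnected case, splitting $G_K$ as a coproduct repeatedly until all underlying graphs are connected, and then counts the $m'$ resulting pieces through the identity $m_0=2(m'-m)+m-1$. That identity tacitly assumes that none of the pieces is $\Delta$ itself (empty underlying graph), and it fails when such factors occur: for $G_K\simeq \Delta\coprod\bigl(\Z_2^2\rtimes_{i_2}\Delta\bigr)$ one has $m=m_0=1$, while the paper's count would give $m_0=2$ (both conclusions of the lemma nevertheless survive, since each empty piece contributes one copy of $\Delta$ and each single-vertex piece contributes two). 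Your additive bookkeeping under coproducts --- $m=\sum_a m^{(a)}$, $m_0=\sum_a m_0^{(a)}+(s-1)$, hence $N=\sum_a\bigl(m_0^{(a)}-m^{(a)}+1\bigr)=m_0-m+1$ --- treats empty pieces uniformly, so your induction closes without this defect. You also make explicit two steps the paper leaves implicit: that the semidirect branch of the trichotomy always produces a connected (join) graph, which is precisely why a disconnected $\Gamma$ forces the coproduct branch; and the dihedral identification $\Z_2\rtimes_{\delta_{\bz_0}}\Delta\simeq\Delta\coprod\Delta$ converting single-vertex factors into copies of $\Delta$, which is where the paper's proof stops at ``this allows us to conclude.'' Finally, your reading of $G_{\Gamma_0}$ in the conclusion as $\Delta=G_{\RR}$ is not optional but forced: with the paper's literal notation ($G_{\Gamma_0}=\Z_2$, the pro-$2$ RAAG on one vertex) the stated isomorphism is false already for $K$ SAP with $d_\Gamma=1$, since $\Z_2\coprod\Z_2$ is torsion-free while $G_K$ contains involutions.
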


\begin{proof}
Assume that~$\Gamma$ is disconnected. Then from Theorem~\ref{structure} and Corollary~\ref{kernel and coproduct}, there exist two RPF fields~$K_a$ and~$K_b$ with underlying graphs~${\Gamma_a}$ and~${\Gamma_b}$ such that:
$$G_K\simeq \G_{K_a}\coprod G_{K_b}, \quad G_\Gamma\simeq G_{\Gamma_a}\coprod G_{\Gamma_b}\coprod G_{\Gamma_0}.$$
From Lemma~\ref{Graph identification}, we can write~$\Gamma\simeq \Gamma_a\coprod \Gamma_b\coprod \Gamma_0$. Applying the previous argument to $\Gamma_a$ and $\Gamma_b$, we can find RPF fields~$K_{u_1},\dots, K_{u_{m'}}$ with underlying connected graphs $\Gamma_{u_1},\dots, \Gamma_{u_{m'}}$ such that:
$$G_K\simeq \coprod_{i=1}^{m'} G_{K_{u_i}} \quad G_L=\coprod_{i=1}^{m'} G_{\Gamma_{u_i}}\coprod_{j=1}^{m'-1}G_{\Gamma_0}.$$
Applying~Lemma~\ref{Graph identification}, this gives us the decomposition:
$$\Gamma \simeq \coprod_{i=1}^{m'} \Gamma_{u_i}\coprod_{j=1}^{m'-1}\Gamma_0.$$
We observe that $m$ is the number of graphs $\Gamma_{u_i}$ with more than two vertices. So~$m'\geq m$. Thus~$m_0\coloneq 2(m'-m)+m-1\geq m-1$. This allows us to conclude.
\end{proof}

\begin{lemm}\label{connected graph}
Assume that $\Gamma$ is connected and in~$Gr(\PP)$. Then there exists an integer~$u_\Gamma$ and a RPF field $K'$ with a unique disconnected underlying graph~$\Gamma'$ such that:
$$G_K\simeq G_{\Gamma^{cu_\Gamma}}\rtimes_{i_{u_\Gamma}}G_{K'},$$
where~$\Gamma^{cu_\Gamma}$ is the complete graph on~$u_\Gamma$ vertices.
Consequently, the underlying graph of~$G_K$ is~$\Gamma\coloneq \Gamma^{cu_\Gamma}\nabla \Gamma'$.
%%$$\Gamma\coloneq \Gamma^{cu_\Gamma}\nabla \Gamma',$$
%%where $ \Gamma^{cu_\Gamma}$ is the complete graph with $u_\Gamma$ vertices.
\end{lemm}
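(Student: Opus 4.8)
The plan is to apply the structure theorem for $\PP$ (Theorem~\ref{structure}), to read off the effect of each generating operation on the underlying graph, and to use connectivity to exclude the coproduct case. I argue by strong induction on the number of vertices $d_\Gamma$, the base case $d_\Gamma=1$ being $G_K\simeq\Z_2\rtimes_{i_1}\Delta$, for which $u_\Gamma=1$ and $\Gamma'$ is empty. Since $\Gamma$ is connected with at least one vertex, we have $G_K\neq\Delta$, whose underlying graph is empty. By Theorem~\ref{structure}, $G_K$ is then either a nontrivial coproduct or a semi-direct product $\Z_2^n\rtimes_{i_n}G_{K_1}$ with $n\geq 1$ and $K_1$ a RPF field. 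I first exclude the coproduct case: if $G_K\simeq G_{K_1}\coprod\cdots\coprod G_{K_s}$ with $s>1$, then iterating Corollary~\ref{kernel and coproduct} gives $G_\Gamma=G_L\simeq G_{\Gamma_1}\coprod\cdots\coprod G_{\Gamma_s}\coprod G_{\Gamma_0}$, hence $G_\Gamma\simeq G_{\Gamma_1\coprod\cdots\coprod\Gamma_s\coprod\Gamma_0}$, and Lemma~\ref{Graph identification} forces $\Gamma\simeq\Gamma_1\coprod\cdots\coprod\Gamma_s\coprod\Gamma_0$. This graph has the isolated vertex $\Gamma_0$ as a separate component, so it is disconnected, contradicting our hypothesis. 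Therefore $G_K\simeq\Z_2^n\rtimes_{i_n}G_{K_1}$ for some $n\geq 1$.

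By Proposition~\ref{rem semitriv2}, the underlying graph of $\Z_2^n\rtimes_{i_n}G_{K_1}$ is $\Gamma_1\nabla\Gamma^{cn}$, where $\Gamma_1$ is the underlying graph of $K_1$; thus $\Gamma\simeq\Gamma^{cn}\nabla\Gamma_1$. If $\Gamma_1$ is disconnected, I take $u_\Gamma\coloneq n$, $\Gamma'\coloneq\Gamma_1$ and $K'\coloneq K_1$, and the proof is complete. If instead $\Gamma_1$ is connected, then it has strictly fewer vertices than $\Gamma$, so the induction hypothesis yields $\Gamma_1\simeq\Gamma^{cv}\nabla\Gamma''$ with $\Gamma''$ disconnected and realized by a RPF field $K''$, together with $G_{K_1}\simeq\Z_2^{v}\rtimes_{i_v}G_{K''}$. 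Since the join of complete graphs is complete, $\Gamma^{cn}\nabla\Gamma^{cv}\simeq\Gamma^{c(n+v)}$, and associativity of $\nabla$ gives $\Gamma\simeq\Gamma^{c(n+v)}\nabla\Gamma''$. At the level of groups, Remark~\ref{rem semitriv} presents each semi-trivial $\Z_2$-factor as a direct factor of the kernel before the semidirect product by $\Delta$; applying it twice shows
\[
\Z_2^n\rtimes_{i_n}\bigl(\Z_2^{v}\rtimes_{i_v}G_{K''}\bigr)\simeq\Z_2^{\,n+v}\rtimes_{i_{n+v}}G_{K''},
\]
because $i_n\times i_v$ is exactly the semi-trivial action $i_{n+v}$ on $\Z_2^n\times\Z_2^v\simeq\Z_2^{n+v}$. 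Setting $u_\Gamma\coloneq n+v$, $\Gamma'\coloneq\Gamma''$ and $K'\coloneq K''$ closes the induction.

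Finally, $\Gamma'$ is uniquely determined: being disconnected, $\Gamma'$ has no universal vertex (a vertex adjacent to all others would join its components), so in $\Gamma\simeq\Gamma^{cu_\Gamma}\nabla\Gamma'$ the universal vertices of $\Gamma$ are precisely the $u_\Gamma$ vertices of the complete factor; hence both $u_\Gamma$ and the induced subgraph $\Gamma'$ are intrinsic to $\Gamma$. The step I expect to be the main obstacle is the amalgamation of the complete factors in the inductive case: beyond the graph identity $\Gamma^{cn}\nabla\Gamma^{cv}\simeq\Gamma^{c(n+v)}$, one must verify that the two nested semi-trivial actions genuinely combine into a single semi-trivial action of $\Delta$ on $\Z_2^{n+v}$, so that the complete factor is maximal and the residual graph $\Gamma'$ is honestly disconnected rather than merely connected with fewer universal vertices.
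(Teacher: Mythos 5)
Your proof is correct and follows essentially the same route as the paper: the structure theorem for $\PP$, exclusion of the coproduct case via connectivity, peeling off the semi-trivial $\Z_2$-factors using Proposition~\ref{rem semitriv2} and Corollary~\ref{kernel and semi-direct}, and Lemma~\ref{Graph identification} to pass from groups back to graphs. The differences are only in completeness, in your favor: the paper strips one $\Z_2$-factor at a time and leaves both the coproduct exclusion and the uniqueness of $(u_\Gamma,\Gamma')$ implicit, whereas you verify that the nested semi-trivial actions merge into $i_{n+v}$ and identify $u_\Gamma$ and $\Gamma'$ intrinsically via the universal vertices of $\Gamma$.
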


\begin{proof}
From Proposition~\ref{rem semitriv2} and Corollary~\ref{kernel and semi-direct}, there exists a RPF field~$K_a$ with underlying field~${\Gamma_a}$ such that:
$$G_K\simeq G_{\Gamma_0}\rtimes_i G_{K_a}, \quad G_\Gamma\simeq G_{\Gamma_0}\times G_{\Gamma_a}.$$
Applying the same argument to~$G_{K_a}$. We infer an integer~$u_\Gamma$ and a RPF field~$K'$ with disconnected underlying graph~$\Gamma'$ such that:
$$G_K\simeq G_{\Gamma^{cu_\Gamma}}\rtimes_{i_{u_\Gamma}}G_{K'}, \quad G_{\Gamma}\simeq G_{\Gamma^{cu_\Gamma}}\times G_{\Gamma'}.$$
From Lemma~\ref{Graph identification}, we infer $\Gamma\simeq \Gamma^{cu_\Gamma}\nabla \Gamma'.$
\end{proof}

From Lemmata~\ref{connected graph} and~\ref{disconnected graph}, we can show the converse of Theorem~\ref{mainpyt}. This allows us to conclude the proof of Theorem~\ref{Theo fonda}.
\begin{theo}\label{Graph and Pyt}
%Let us fix $\Gamma$, and two elements $\bz_1$ and $\bz_2$ in $G_\Gamma^{d_\Gamma}$ such that $G_{\bz_1}$ and~$G_{\bz_2}$ are both $\Delta$-RAAGs (i.e. well defined).

%If $G_{\bz_1}$ and $G_{\bz_2}$ are both in $\PP$, then $G_{\bz_1}\simeq G_{\bz_2}$. Consequently, groups in $\PP$ are uniquely determined by their underlying graphs.

Let us consider two RPF fields $K_1$ and $K_2$, with underlying graphs~$\Gamma_1$ and~$\Gamma_2$. If $\Gamma_1\simeq \Gamma_2$, 
%with $L_1\coloneq K(\sqrt{-1})$ and $L_2\coloneq K(\sqrt{-1})$ 
then~$G_{K_1}\simeq G_{K_2}$. Consequently, $\Delta$-RAAGs in~$\PP$ are uniquely determined by their underlying graphs.
\end{theo}

\begin{proof}
We define $\Gamma$ the underlying graph of $G_{K_1}$ and $G_{K_2}$. We show by induction on~$d_\Gamma$ that $G_{K_1}\simeq G_{K_2}$.

If $d_\Gamma=0$, then $G_{K_1}\simeq G_{K_2}\simeq \Delta$.

If $d_\Gamma=1$, then $G_{K_1}\simeq G_{K_2}\simeq G_{\Gamma_0}\rtimes_{\bz_0}\Delta$. Both groups are SAP and superpythagorean.

If $d_\Gamma>1$, we distinguish two cases:

$\bullet$ Assume that the graph $\Gamma$ is connected. Then from~Lemma~\ref{connected graph}, there exists an integer~$u_\Gamma>0$ and two RPF fields~$K_1'$ and $K_2'$ with same disconnected underlying graph~$\Gamma'$ such that:
$$G_{K_1}\simeq G_{\Gamma^{cu_\Gamma}}\rtimes_{i_{u_\Gamma}}G_{K_1'}, \quad G_{K_2}\simeq G_{\Gamma^{cu_\Gamma}}\rtimes_{i_{u_\Gamma}}G_{K_2'}.$$ 
Since~$d_{\Gamma'}<d_{\Gamma}$, we deduce, by induction hypothesis, that~$G_{K_1'}\simeq G_{K_2'}$. Thus~$G_{K_1}\simeq G_{K_2}$.

$\bullet$ If we are not in the previous case, then the graph $\Gamma$ is disconnected.
%We denote by $m$ and  $l$ the number of isolated vertices and nontrivial connected components of $\Gamma$. We denote nontrivial connected components by~$\Gamma_k$, for $1\leq k \leq l$. 
Following Lemma~\ref{disconnected graph}, there exist RPF fields~$K_{1i}$ and~$K_{2i}$ with underlying graphs~$\Gamma_i$, for~$1\leq i \leq m$ such that:
$$G_{K_1}\simeq \coprod_{i=1}^mG_{K_{1i}}\coprod_{j=1}^{m_0-m-1}G_{\Gamma_0}, \quad \text{and} \quad  G_{K_2}\simeq \coprod_{i=1}^mG_{K_{2i}}\coprod_{j=1}^{m_0-m-1}G_{\Gamma_0}.$$ 
Since for every~$1\leq i \leq m$ we have $d_{\Gamma_i}<d_{\Gamma}$, we deduce by induction hypothesis that~$G_{K_{1i}}\simeq G_{K_{2i}}$. Thus $G_{K_1}\simeq G_{K_2}$.
%Thus $G_{L_{1k}}\simeq G_{L_{2k}}\simeq G_{\Gamma_k}$ for $1\leq k \leq l$, and we have:
%$$G_{L_1}\simeq \coprod_{j=1}^{m} \Z_2 \coprod_{k=1}^l G_{L_{1k}}, \quad \text{and} \quad G_{L_2}\simeq \coprod_{j=1}^{m} \Z_2 \coprod_{k=1}^l G_{L_{2k}}.$$
%Since~$G_{L_{1k}}\simeq G_{L_{2k}}$ for every $1\leq k\leq l$, we infer $G_{K_{1k}}\simeq G_{K_{2k}}$. Therefore~$G_{K_1}\simeq G_{K_2}$.
%Observe that $G_{\bz_{ik}'}$ is in $\PP$ by using an induction argument. But an induction inside an induction is not an elegant proof.
\end{proof}

\begin{exem}\label{second set of examples}
Let us consider the groups 
$$G_1\coloneq (\Z_2^2\rtimes_{\delta_{\bz_0}}\Delta) \coprod (\Z_2^2\rtimes_{\delta_{\bz_0}}\Delta), \quad \text{and} \quad G_2\coloneq (\Z_2\rtimes_i (F(2)\rtimes_{\delta_{\bz_0}} \Delta))\coprod \Delta\coprod \Delta.$$
These groups are $\Delta$-RAAGs, and in $\PP$.
We can represent them with the following graphs:

% https://tikzcd.yichuanshen.de/#N4Igdg9gJgpgziAXAbVABwnAlgFyxMJZABgBoAmAXVJADcBDAGwFcYkQAdDgcXoFs+9APoBGRAF4QAX1LpMufIRQjSI6nSat2AD1HTZIDNjwEiKgMzqGLNohC7y+uccVFzqq5tv2h5p4fkTJWR3SxprLTtdABZ-IwVTFABWCk8bdi5eAWFyCTjA1xQANg9wrx09GWcE4JKwjXSooUcqgJdE5AAOUnqI7xj89uDutTLGnz9W+KCiAHZSYjTInyTBmqJyBaX+oVWpdRgoAHN4IlAAMwAnCD4kFRAcCCRNkAALGHoodkgwNlarm5IdwPJ6IaI0d6fb4EP4GAG3RAlEFIeZvD5fOw-WEXa4I1GPJDdNFQzEw6SUKRAA
\begin{tikzcd}
           &                         & x_5 &                         &  &            &                         & x_5 &     \\
           & x_1 \arrow[dd, no head] &     & x_3 \arrow[dd, no head] &  &            & x_1 \arrow[dd, no head] &     & x_3 \\
\Gamma_1\coloneq  &                         &     &                         &  & \Gamma_2\coloneq  &                         &     &     \\
           & x_2                     &     & x_4                     &  &            & x_2 \arrow[rr, no head] &     & x_4
\end{tikzcd}

\justifying
Precisely, if we define 
$$\bz_1\coloneq (1,1,x_5,x_5,1)\in G_{\Gamma_1}^5, \quad \text{and} \quad \bz_0\coloneq (1,1,1,1,1)\in G_{\Gamma_2}^5,\quad \text{and} \quad \bz_0'\coloneq (1,1,1,1,1)\in G_{\Gamma_1}^5.$$
we infer $G_1\coloneq G_{\Gamma_1}\rtimes_{\delta_{\bz_1}}\Delta$ and $G_2\coloneq G_{\Gamma_2}\rtimes_{\delta_{\bz_0}}\Delta$. 
We observe that $G_1$ and $G_2$ have the same Poincaré and gocha series, but are not isomorphic, since $\Gamma_1$ and $\Gamma_2$ are not. We refer to Theorem~\ref{mainpyt}.

Let us now consider the group~$G_3\coloneq G_{\Gamma_1}\rtimes_{\delta_{\bz_0'}}\Delta$. We observe that~$G_1$ and~$G_3$ are not isomorphic. Thus from Theorem~\ref{Graph and Pyt}, the group~$G_3$ is not in~$\PP$. But of course $G_3$ has the same gocha series as~$G_1$ and~$G_2$. 
\end{exem}

\subsection{Koszulity, cohomology and gocha series for groups in $\PP$}
Let~$\Gamma$ be in~$Gr(\PP)$. In this subsection we study~$\Gamma(t)$. Let us also refer to~\cite{weigel2015graded} for a study of clique polynomials in other contexts.

We start this subsection by answering to a question asked by Weigel in \cite{weigel652koszul} for the class $\PP$. \textit{If~$G$ is in~$\PP$, is~$\E(G)$ Koszul?}
\begin{prop}\label{koszulpyt}
If $G$ is in $\PP$, then $\E(G)$ is Koszul. 
Consequently, there exists a graph~$\Gamma$ and a vector $\bz\in G_\Gamma^{d_\Gamma}$ such that~$G\simeq G_{\bz}$. Furthermore, the presentation~\eqref{z-Pres} is minimal and 
$$d(G)\coloneq \dim_{\F_2} H^1(G)=d_\Gamma+1 \quad \text{and} \quad r(G)\coloneq \dim_{\F_2} H^2(G)=d_\Gamma+r_\Gamma +1.$$
Additionally, the Poincaré series of~$G$ is given by:
$$H^\bullet(G,t)\coloneq \sum_n \dim_{\F_2} H^n(G)t^n=\frac{\Gamma(t)}{1-t}.$$
\end{prop}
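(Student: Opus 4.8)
The plan is to first pass to the $\Delta$-RAAG description, then prove that the associated graded algebra is Koszul, and finally read off the cohomological data by Koszul duality.

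First I would apply Theorem~\ref{mainpyt}: since $G$ is in $\PP$, there exist a graph $\Gamma$ and a vector $\bz\in G_\Gamma^{d_\Gamma}$ with $G\simeq G_\bz$, which already gives the existence statement. By Theorem~\ref{gocha Delta Raag} we have $\E(G)\simeq \E_\bz=\E/\I_\bz$, and by Remark~\ref{quad defined} this presentation is quadratically defined; in particular $\E(G)$ is a quadratic $\F_2$-algebra, so its quadratic dual $\E(G)^!$ is defined and the Hilbert series duality will apply once Koszulity is established.

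The core step is to prove that $\E_\bz$ is Koszul. My plan is to exploit the self-duality of the Koszul property: a quadratic algebra is Koszul if and only if its quadratic dual is. For $\bz=\bz_0=(1,\dots,1)$ the dual of $\E_{\bz_0}$ is a squarefree (Stanley–Reisner type) quadratic monomial algebra determined by the clique structure of $\Gamma$ together with the relations $X_0^2$ and $\{X_k^2\}_{k\ge 1}$, and is Koszul by Fröberg's theorem on quadratic monomial algebras (the underlying RAAG fact $\E_\Gamma$ Koszul being \cite[Theorem~$1.2$]{bartholdi2020right}). For a general $\bz$ the extra term $[X_k,\epsilon_k]$ in the relation $[X_0,X_k]+X_k^2+[X_k,\epsilon_k]$ deforms these relations away from the monomial situation, so I would instead fix a monomial order for which $\{X_k^2\}_{0\le k\le d}\cup\{X_uX_v\}_{\{u,v\}\in\bE}$ is the set of leading terms and verify that the defining relations form a quadratic Gröbner basis, so that $\E_\bz$ is PBW and hence Koszul. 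An equivalent route is to argue inductively along the structure of $\PP$ from Theorem~\ref{structure}: $\E(\Delta)=\F_2[X_0]/(X_0^2)$ is Koszul, and Koszulity is preserved under coproducts (Theorem~\ref{technical coprod}, where $\E(G_1\coprod G_2)$ is the coproduct of $\E(G_1)$ and $\E(G_2)$) and under the semidirect products $\Z_2^n\rtimes_{i_n}(-)$ (Proposition~\ref{rem semitriv2}, which re-expresses the result as a $\Delta$-RAAG on a join graph). I expect this to be the main obstacle: the deformation term $[X_k,\epsilon_k]$ blocks a direct appeal to Fröberg, so the confluence of the overlap (S-polynomial) computations is the point requiring genuine work; the constancy of the Hilbert series under this deformation, which is guaranteed by Corollary~\ref{gocha Delta Raag2}, is exactly the numerical input that makes the Gröbner argument viable.

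Once $\E(G)$ is known to be Koszul, the rest is formal. By \cite[Proposition~$3.4$]{hamza2023extensions} the cohomology algebra is the quadratic dual, $H^\bullet(G)\simeq \E(G)^!$, and the Koszul duality of Hilbert series gives $gocha(G,t)\cdot H^\bullet(G,-t)=1$. Substituting $gocha(G,t)=\frac{1+t}{\Gamma(-t)}$ from Corollary~\ref{gocha Delta Raag2} yields $H^\bullet(G,-t)=\frac{\Gamma(-t)}{1+t}$, and replacing $t$ by $-t$ gives
$$H^\bullet(G,t)=\frac{\Gamma(t)}{1-t}.$$
Finally I would expand this series: with $\Gamma(t)=1+d_\Gamma t+r_\Gamma t^2+\cdots$ and $\frac{1}{1-t}=1+t+t^2+\cdots$, the coefficients of $t^1$ and $t^2$ are $d_\Gamma+1$ and $d_\Gamma+r_\Gamma+1$, so $d(G)=\dim_{\F_2}H^1(G)=d_\Gamma+1$ and $r(G)=\dim_{\F_2}H^2(G)=d_\Gamma+r_\Gamma+1$. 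Since the presentation~\eqref{z-Pres} has exactly $d_\Gamma+1$ generators and $r_\Gamma+d_\Gamma+1$ relations, matching $\dim_{\F_2}H^1(G)$ and $\dim_{\F_2}H^2(G)$, it attains the minimal number of generators and relations and is therefore minimal.
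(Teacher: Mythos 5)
Your outer skeleton coincides with the paper's: existence of $\Gamma$ and $\bz$ from Theorem~\ref{mainpyt}, the Hilbert series identity $gocha(G,t)\cdot H^\bullet(G,-t)=1$ fed by Corollary~\ref{gocha Delta Raag2}, the expansion giving $d(G)=d_\Gamma+1$ and $r(G)=d_\Gamma+r_\Gamma+1$, and the resulting minimality of~\eqref{z-Pres}. The gap is in the step everything hinges on: Koszulity of $\E(G)\simeq\E_\bz$, which you never actually establish. Note that your duality runs in the opposite direction from the paper's. The paper does \emph{not} prove Koszulity of $\E_\bz$ from scratch: it quotes the previously known fact that $H^\bullet(G)$ is Koszul for every $G$ in $\PP$ (\cite[Theorem $F,(3)$]{minac2020enhanced} or \cite[Theorem $D,(g)$]{minac2021koszul}), observes via Remark~\ref{quad defined} and \cite[Theorem $F$]{minac2021koszul} that $H^\bullet(G)$ is the quadratic dual of $\E(G)$, and then lets Koszulity descend from $H^\bullet(G)$ to $\E(G)$ by the self-duality of the Koszul property. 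You instead plan to prove $\E_\bz$ Koszul directly and only then recover the cohomology ring; that makes the hard part precisely the one you defer (``the confluence \dots is the point requiring genuine work''), so as written the proposal does not prove the proposition.

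Moreover, the deferred step has concrete defects as you set it up. First, your claimed leading-term set $\{X_k^2\}_{0\le k\le d}\cup\{X_uX_v\}_{\{u,v\}\in\bE}$ is in general not realizable: for the relation $[X_0,X_k]+[X_k,\epsilon_k]+X_k^2$, making $X_k^2$ the leading term in a degree-lexicographic order forces $X_k$ to dominate $X_0$ and every variable in the support of $\epsilon_k$, simultaneously for all $k$; if $\epsilon_k$ involves $X_j$ while $\epsilon_j$ involves $X_k$, these requirements are circular. The order that does work --- and the one the paper itself uses in its explicit PBW computation (Lemma~\ref{minpres}) --- puts $X_0$ largest, so the leading terms are $X_0X_k$, not $X_k^2$. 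Second, the appeal to Fr\"oberg for $\bz_0$ is unjustified: the quadratic dual of $\E_{\bz_0}$ is not a monomial algebra, since it contains the binomial relations $\chi_0\cup\psi_k+\psi_k^2$ (compare Lemma~\ref{cohomG}); only the RAAG part $\E_\Gamma$ has a Stanley--Reisner dual. Third, equality of Hilbert series is well known \emph{not} to imply Koszulity, so Corollary~\ref{gocha Delta Raag2} by itself does not make the Gr\"obner argument work; it can be upgraded to a proof (with $X_0$ largest the leading terms are independent of $\bz$, and a quadratic algebra whose leading-term monomial algebra has the same Hilbert series is PBW), but that argument has to be made, and you do not make it. Finally, your inductive alternative along Theorem~\ref{structure} asserts without proof that Koszulity of $\E(-)$ is preserved under pro-$2$ coproducts and under $\Z_2^n\rtimes_{i_n}(-)$; this needs the identifications of $\E(G_1\coprod G_2)$ with the algebra coproduct and of $\E$ of the join construction of Proposition~\ref{rem semitriv2} with a (commuting) tensor product, which you never supply. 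The cleanest repair is exactly the paper's shortcut: invoke the known Koszulity of $H^\bullet(G)$ for the class $\PP$ and transfer it to $\E(G)$ by quadratic duality.
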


\begin{proof}
Take $G$ in $\PP$.  From \cite[Theorem $F, (3)$]{minac2020enhanced} or \cite[Theorem $D, (g)$]{minac2021koszul}, we observe that $H^\bullet(G)$ is Koszul. From Theorem~\ref{mainpyt}, there exists $\bz$ and~$\Gamma$ such that~$G_{\bz}\simeq G$. Then from Remark~\ref{quad defined} and~\cite[Theorem $F$]{minac2021koszul}, we conclude that $H^\bullet(G)$ is the quadratic dual of $\E(G)$. Consequently $\E(G)$ is also Koszul. We infer from Corollary~\ref{gocha Delta Raag2} that:
\begin{multline*}
H^\bullet(G,t)\coloneq \sum_n \dim_{\F_2} H^n(G)t^n=\frac{\Gamma(t)}{1-t}
\\=(1+d_\Gamma t+r_{\Gamma}t^2+\dots)(1+t+t^2+\dots)=1+(d_\Gamma +1)t+(r_\Gamma +d_{\Gamma}+1)t^2+\dots
\end{multline*}
Thus $d(G)=d_\Gamma+1$, and $r(G)=d_\Gamma+r_\Gamma +1$.
\end{proof}

We conclude this subsection by using results from \cite{minachilbseries} to characterise which polynomials~$\Gamma(t)$ are realizable, for~$\Gamma$ in~$Gr(\PP)$.
\begin{theo}\label{proving}
Let~$\Gamma$ be an undirected graph. The graph~$\Gamma$ is in~$Gr(\PP)$, if and only if there exists  integers~$s$ and~$a_0,\dots , a_{s-1}$ satisfying all of the following conditions:
 
    $(i)$ $0\leq a_0,\dots ,a_{s-2}$, 
    
    $(ii)$ $1\leq a_{s-1}, s$, 
    
    $(iii)$~$a_0+\dots+a_{s-1}+s=d_\Gamma$,
    
     $(iv)$ $\Gamma(t)= (1+t)^{s-1}+t\left( (1+t)^{s-1}a_{s-1}+(1+t)^{s-2}a_{s-2}+\dots +a_0\right).$
    %Conversely, assume that
    %~$\Gamma(t)\coloneq (1+t)^{s-1}+t\left( (1+t)^{s-1}a_{s-1}+\dots +a_0\right)$, where 
    %there exists integers~$s$ and $a_0,\dots , a_{s-1}$ satisfying~$(i)-(iii)$. Then there exists a graph $\Gamma$ in~$Gr(\PP)$ such that~$\Gamma(t)=\Gamma(t)$.
\end{theo}

\begin{proof}
%    Let~$\Gamma$ be in~$Gr(\PP)$. Thus there exists a RPF field~$K$ such that $G_L=G_\Gamma$, where~$L\coloneq K(\sqrt{-1})$. 
    The Poincaré series of $G_\Gamma$ is given by:
    $$H^\bullet(G_\Gamma,t)\coloneq \sum_{n\in \NN} \dim_{\F_2}H^n(G_\Gamma)t^n= \Gamma(t).$$
%    where $c_n(\Gamma)$ is the number of $n$-cliques (complete subgraphs with $n$ vertices) of $\Gamma$. 
Let~$\Gamma$ be in~$Gr(\PP)$.    From~\cite[Theorem $11$]{minachilbseries}, we conclude that $\Gamma(t)= (1+t)^{s-1}+t\left( (1+t)^{s-1}a_{s-1}+\dots +a_0\right)$ for some integers $s$, $a_0,\dots , a_{s-1}$ satisfying $(i)-(iii)$.
    
The converse is a consequence of \cite[Theorem $11$]{minachilbseries}.
\begin{comment}
It is not true that $\Gamma(t)=\Gamma'(t)$ implies $\Gamma\simeq \Gamma'$. Consider for instance $\Gamma$ the Mobius graph with $8$ vertices and $\Gamma'$ the $8$-Prism graph. The first one is planar but not the second one.
\end{comment}
\end{proof}

\section{An interesting example}\label{final example}
We conclude this paper with an example of a pro-$2$ group which is not a maximal pro-$2$ quotient of an absolute Galois group, but satisfies the Koszul, the strong Massey Vanishing and the Kernel Unipotent properties. Let us first recall these properties for a general pro-$p$ group~$G$.

$\bullet$ We say that a pro-$p$ group $G$ satisfies the Koszul property if the cohomology ring~$H^\bullet(G;\F_p)$ is Koszul. Positselski~\cite{positselski2014galois}
%-\cite[Conjectures $1.3$ and $1.6$]{minac2021koszul} 
formulated the following conjecture. All maximal~pro-$p$ quotients of absolute Galois groups over fields, containing the primitive $p$-th roots of unity, satisfy the Koszul property.

$\bullet$ We recall the Kernel Unipotent property. We fix a positive integer $n$, and define:
$$G_{\langle n \rangle}\coloneq \bigcap_{\rho\colon G \to \mathbb{U}_n(\F_p)} {\rm ker}\rho.$$
We easily observe that $G_n \subset G_{\langle n\rangle}$, and we say that $G$ satisfies the $n$-Kernel Unipotent property if $G_{\langle n\rangle}=G_n$. If $G$ satisfies the $n$-Kernel Unipotent property for every $n$, we say that~$G$ satisfies the Kernel Unipotent property. Efrat and the last two authors \cite[Appendix]{minavc2015kernel} gave, for every integer $n\geq 3$, examples of groups which are not maximal pro-$p$ quotients of absolute Galois groups and do not satisfy the $n$-Kernel Unipotent property. The last two authors also formulated the following conjecture \cite[Conjecture $1.3$]{minavc2015kernel}. All maximal pro-$p$ quotients of absolute Galois groups over fields, containing the primitive $p$-th root of unity, satisfy the Kernel Unipotent property.

$\bullet$ We recall the strong Massey Vanishing property. Let $n$ be a positive integer. Let $\alpha$ be a family of characters~$\alpha\coloneq \{\alpha_i\colon G \to \F_p\}_{i=1}^n$ in $H^1(G;\F_p)^n$ checking $\alpha_i\cup \alpha_{i+1}=0$. We say that a group $G$ satisfies the strong Massey Vanishing property if for every~$n$ and~$\alpha$, there exists a morphism $\rho\colon G \to \mathbb{U}_{n+1}(\F_p)$ such that for every~$g$ in $G$, we have
$$\rho_{i,i+1}(g)\coloneq \rho(g)_{i,i+1}= \alpha_i(g).$$ The strong Massey Vanishing property does not hold for all maximal pro-$p$ quotients of absolute Galois groups of fields containing a $p$-th root of unity. We refer to Harpaz-Wittenberg's counterexample~\cite[Example $A.15$]{guillot2018four}. However, Ramakrishna and the last three authors \cite[Theorem $1$]{maire2024strong} showed this property for maximal pro-$p$ (tame) quotients of absolute Galois groups of number fields, which do not contain a primitive $p$-th root of unity. 
%It was also conjectured (see for instance \cite[Conjecture $1.1$]{minavc2015kernel}) that if $G$ is a maximal pro-$p$ quotient of an absolute Galois group, then it satisfies a weaker condition called the Massey Vanishing property. 
Quadrelli \cite{quadrelli2023massey}-\cite{quadrelli2024massey} showed the strong Massey Vanishing property for Elementary pro-$p$ groups and the class $\PP$. Observe also that the strong Massey Vanishing property implies the Massey Vanishing property stated by the last two authors. We refer to~\cite{merkurjev2024lectures} for further details.

This section aims to prove~Theorem~\ref{impo exam}. Let us define the pro-$2$ group~$G$ by the presentation:
\begin{multline} \tag{$\bz_0$-Pres} \label{pres}
G\coloneq \langle x_0, x_1, x_2,x_3,x_4| \quad \lbrack x_1,x_2\rbrack =\lbrack x_2,x_3\rbrack =\lbrack x_3,x_4\rbrack =\lbrack x_4,x_1\rbrack =1, 
\\x_0^2=1, x_0x_jx_0x_j=1, \forall j\in [\![1;4]\!]\rangle
\end{multline}
\begin{theo}\label{counterexample}
The group $G$ is not a maximal pro-$2$ quotient of an absolute Galois group but satisfies the Koszul, the Kernel Unipotent and the strong Massey Vanishing properties. Furthermore, the presentation \eqref{pres} is minimal and $\E(G)$ is Koszul.
\end{theo}

We introduce several pro-$2$ groups which will play a fundamental role in the study of~$G$ and the proof of~Theorem~\ref{counterexample}.
 Let~$G_{13}$ and~$G_{24}$ be the subgroups of $G$ generated by~$\{x_0, x_1, x_3\}$ and~$\{x_0, x_2, x_4\}$. We also introduce~$F_{13}$ and~$F_{24}$ the subgroups of~$G$ generated by~$\{x_1,x_3\}$ and~$\{x_2,x_4\}$. We define $G_0\coloneq \langle y_1\rangle \coprod \langle y_2\rangle \coprod \langle y_3\rangle \simeq \Delta\coprod \Delta\coprod \Delta$, where $y_i^2=1$, for~$1\leq i \leq 3$. The group~$G_0$ is a quotient of~$G$ and has an easy structure. It is SAP, so in~$\PP$. 
 
Furthermore, we easily observe that we have natural surjections~$\pi_{13}\colon G_0\to G_{13}$ and~$\pi_{24}\colon G_0\to G_{24}$ defined by:
$$\pi_{13}(y_1y_2) \coloneq x_1, \quad \pi_{13}(y_1y_3)\coloneq x_3, \pi_{24}(y_1y_2) \coloneq x_2, \quad \pi_{24}(y_1y_3)\coloneq x_4 \quad \pi_{13}(y_1)=\pi_{24}(y_1)\coloneq x_0.$$

\subsection{First properties of the group~$G$}
We observe that~$G$ is a $\Delta$-RAAG given by~$G\coloneq \left( F(2)\times F(2)\right)\rtimes_{\delta_{\bz_0}}\Delta$.
The underlying graph of the $\Delta$-RAAG $G$ is the square graph $\Gamma=C_4$:

\centering{
\begin{tikzcd}
x_1 \arrow[dd, no head] \arrow[rr, no head] &  & x_2 \arrow[dd, no head]  \\
                            &  &                             \\
x_4   \arrow[rr, no head]                  &  & x_3                    
\end{tikzcd}} 

\justifying
And the action is defined by the vector $\bz_0\coloneq (1,1,1,1)$ in~$G_\Gamma^4$. Thanks to the~$\Delta$-RAAG theory, we infer the following result:

\begin{lemm}\label{not absolute quotient}
The group $G$ is not a maximal pro-$2$ quotient of an absolute Galois group.
\end{lemm}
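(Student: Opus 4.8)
The plan is to argue by contradiction, exploiting the fact—already recorded in the paragraph preceding the statement—that $G$ is a $\Delta$-RAAG whose underlying graph is the square $C_4$, together with the realizability criterion of Theorem~\ref{mainpyt} and the forbidden-subgraph obstruction of Snopce and Zalesski recalled at the opening of this section. Suppose that $G$ were a maximal pro-$2$ quotient of an absolute Galois group, say $G\simeq G_K$ for some field $K$. Since $G$ is a $\Delta$-RAAG, condition $(i)$ of Theorem~\ref{mainpyt} holds (taking $G_\bz\coloneq G$), so by the equivalence $(i)\Leftrightarrow(ii)$ the field $K$ must be RPF.

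First I would observe that $K$ being RPF forces $G_K\in\PP$ by definition, so its underlying graph lies in $Gr(\PP)$; by construction this underlying graph is $C_4$. But the graphs in $Gr(\PP)$ cannot contain $C_4$ as a subgraph, by \cite[Theorem~$1.2$]{snopce2022right} as recalled above, and $C_4$ trivially contains itself. This is the desired contradiction, and it rules out every RPF field. The uniqueness statement of Theorem~\ref{Theo fonda}, which determines a group in $\PP$ from its underlying graph, reinforces that no RPF field can produce $G$.

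The only genuine gap to close is the characteristic-$2$ case, since the equivalence of Theorem~\ref{mainpyt} is stated under the standing hypothesis that $K$ has characteristic different from $2$. Here I would argue separately and briefly: if $K$ has characteristic $2$, then by Artin--Schreier theory the maximal pro-$2$ quotient $G_K$ is a free pro-$2$ group (equivalently $H^2(G_K)=0$), hence torsion-free; but $G$ carries the nontrivial involution $x_0$—indeed $G$ surjects onto $\Delta$ sending $x_0\mapsto x_0$, whence $x_0\neq 1$ while $x_0^2=1$—so $G$ is not free pro-$2$. Thus $G\not\simeq G_K$ in every characteristic. The main obstacle here is not computational but organizational: one must confirm that the hypotheses of Theorem~\ref{mainpyt} are genuinely in force (namely that $G$ is literally a $\Delta$-RAAG with graph $C_4$, which is exactly what the preceding discussion establishes) and that the characteristic-$2$ possibility has been excluded, after which the forbidden-subgraph obstruction does all the remaining work.
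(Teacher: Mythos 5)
Your proof is correct and takes essentially the same route as the paper: both arguments invoke Theorem~\ref{mainpyt} to force any field realizing $G$ to be RPF, and then derive a contradiction from the Snopce--Zalesski obstruction \cite[Theorem~$1.2$]{snopce2022right}, which prevents the square graph $C_4$ from lying in $Gr(\PP)$ (the paper also records an alternative via Lemmata~\ref{connected graph} and~\ref{disconnected graph}). Your explicit disposal of the characteristic-$2$ case---where $G_K$ is free pro-$2$ and hence torsion-free, while $G$ contains the involution $x_0$---is a completeness refinement the paper leaves implicit, not a different argument.
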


\begin{proof}
The group $G_\Gamma$ is not realizable as a maximal pro-$2$ quotient of an absolute Galois group of a field containing a second root of unity. We refer to~\cite[Theorem $1.2$]{snopce2022right} or~\cite[Theorem~$5.6$]{Quadrelli2014}. Thus $G$ is not in $\PP$. Alternatively, we observe from Lemmata~\ref{connected graph} and~\ref{disconnected graph} that $\Gamma$ is not in~$Gr(\PP)$. Thus~$G$ is not in~$\PP$, and so not a maximal pro-$2$ quotient of an absolute Galois group. 
%we can use the results from the , and show that there exit no groups in $\PP$ with same Hilbert and Poincaré series as $G$. Thus from Theorem \ref{Theo fonda}, the $\Delta$-RAAG $G$ is not a maximal pro-$2$ quotient of an absolute Galois group.
\end{proof}

\begin{rema}
We observe that $\Gamma$ is the only graph with clique polynomial~$\Gamma(t)\coloneq 1+4t+4t^2$. Thus from Theorem~\ref{proving}, we conclude that there exists no $\Delta$-RAAG $G'$ in~$\PP$  with underlying graph $\Gamma'$ such that $\Gamma(t)=\Gamma'(t)$.  
\end{rema}

Despite not being in $\PP$, the group $G$ contains subgroups in $\PP$.
 %Furthermore, $G$ is isomorphic to the group $F(2)\rtimes_{\delta}G_0,$ where $F(2)$ is pro-$2$ free on two generators $\{a,b\}$ and $\delta$ is defined by:
%$$\delta(y_i)(x)=x^{-1}, \quad \delta(y_i)(y)=y^{-1}, \quad \text{for } 1\leq i \leq 3.$$

\begin{lemm}\label{decomposition G}
The groups~$G_{13}$ and~$G_{24}$ are isomorphic to~$G_0$. The groups~$F_{13}$ and~$F_{24}$ are pro-$2$ free on $\{x_1,x_3\}$ and~$\{x_2,x_4\}$. Additionally, we have the decomposition
$$G\simeq F_{24}\rtimes_{\delta_F} G_{13}\simeq F_{13}\rtimes_{\delta_{F'}}G_{24},$$ 
where the action~$\delta_F$ and~$\delta_{F'}$ are defined by:
\begin{equation*}
\begin{aligned}
\delta_F(x_0)(x_i)=x_i^{-1}, \quad \delta_F(x_j)(x_i)= x_i, \quad \text{for } i\in\{2,4\}, \text{ and } j\in \{1,3\},
\\ \delta_{F'}(x_0)(x_j)=x_j^{-1}, \quad \delta_{F'}(x_i)(x_j)=x_j, \quad \text{for } i\in\{2,4\}, \text{ and } j\in \{1,3\}.
\end{aligned}
\end{equation*}
\end{lemm}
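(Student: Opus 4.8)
The plan is to deduce every assertion from the direct-product structure of the underlying RAAG together with one explicit retraction. First I would record that the non-edges of the square graph $\Gamma=C_4$ are exactly $\{1,3\}$ and $\{2,4\}$, so that inside $G_\Gamma$ the standard generators $x_1,x_3$ generate a free pro-$2$ group $F_{13}$ of rank $2$, the generators $x_2,x_4$ generate a free pro-$2$ group $F_{24}$ of rank $2$, and the two factors commute elementwise. This is precisely the identification $G_\Gamma\simeq F_{13}\times F_{24}$ underlying the already-recorded presentation $G\simeq (F(2)\times F(2))\rtimes_{\delta_{\bz_0}}\Delta$, and it immediately gives the freeness assertion for $F_{13}$ and $F_{24}$.

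Next, to prove $G_{13}\simeq G_0$ (and symmetrically $G_{24}\simeq G_0$) I would exhibit a retraction witnessing that $\pi_{13}$ has trivial kernel. Define $\rho\colon G\to G_0$ on generators by $x_0\mapsto y_1$, $x_1\mapsto y_1y_2$, $x_3\mapsto y_1y_3$, and $x_2,x_4\mapsto 1$. A direct check shows that $\rho$ respects the defining relations of~\eqref{pres}: each of the four commutator relations involves one of $x_2,x_4$ and so becomes trivial once these are killed, while each relation $x_0x_jx_0x_j=1$ maps, using $y_1^2=1$, to $y_2^2=1$, $y_3^2=1$, or $y_1^2=1$. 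Composing with $\pi_{13}$ and tracing generators gives $\rho\circ\pi_{13}=\mathrm{id}_{G_0}$ (for instance $y_2\mapsto x_0x_1\mapsto y_1\cdot y_1y_2=y_2$), so the surjection $\pi_{13}$ is injective, hence an isomorphism; the argument for $\pi_{24}$ is identical.

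Finally, for the semidirect decompositions I would observe that since $\delta_{\bz_0}$ inverts each generator it preserves the factor $F_{24}$, so $F_{24}$ is normal in $G=G_\Gamma\rtimes_{\delta_{\bz_0}}\Delta$, with $G/F_{24}\simeq F_{13}\rtimes_{\delta_{\bz_0}}\Delta=G_{13}$. The inclusion $G_{13}\hookrightarrow G$ splits the projection $G\twoheadrightarrow G/F_{24}$, yielding the internal semidirect product $G\simeq F_{24}\rtimes G_{13}$. The conjugation action is then read off the relations: $x_0x_ix_0=x_i^{-1}$ for $i\in\{2,4\}$ (from $x_0^2=1$ and $x_0x_ix_0x_i=1$) and $x_jx_ix_j^{-1}=x_i$ for $j\in\{1,3\}$ (from the commuting relations), which is exactly $\delta_F$. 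Interchanging the roles of $\{1,3\}$ and $\{2,4\}$ gives $G\simeq F_{13}\rtimes_{\delta_{F'}}G_{24}$.

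The only genuinely non-formal step is the isomorphism $\pi_{13}\colon G_0\to G_{13}$: a priori $G_{13}$, being defined as a subgroup of $G$, could carry hidden relations forcing a collapse, and it is the explicit retraction $\rho$ — rather than any dimension or Lie-algebra count — that rules this out. Everything else is bookkeeping with the direct-product and semidirect-product structures already established for $\Delta$-RAAGs.
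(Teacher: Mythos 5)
Your argument is correct, and it covers all three assertions: freeness of $F_{13}$ and $F_{24}$ from the recorded product structure, the isomorphisms $\pi_{13},\pi_{24}$ via your retraction, and the two semidirect decompositions by internal bookkeeping. However, your mechanism for the central step is genuinely different from the paper's. The paper never constructs a retraction $G\to G_0$: instead it defines an action $\psi_0$ of $G_0$ on $F(2)$ in which every $y_i$ inverts both generators, identifies $G\simeq F(2)\rtimes_{\psi_0}G_0$ by regrouping semidirect products as in Remark~\ref{rem semitriv}, and thereby obtains injections $\varphi_{13},\varphi_{24}\colon G_0\hookrightarrow G$ whose images are $G_{13}$ and $G_{24}$; since $\varphi_{ij}=\iota_{ij}\circ\pi_{ij}$, the surjections $\pi_{ij}$ must be isomorphisms, and the same isomorphism $G\simeq F(2)\rtimes_{\psi_0}G_0$ already \emph{is} the asserted decomposition $G\simeq F_{24}\rtimes_{\delta_F}G_{13}$. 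So in the paper a single structural isomorphism does all the work at once, in the style of the general $\Delta$-RAAG machinery (Remark~\ref{rem semitriv}, Proposition~\ref{rem semitriv2}, Theorem~\ref{technical coprod}); the cost is that it implicitly relies on the identification $G_0\simeq F(2)\rtimes_{\delta_{\bz_0}}\Delta$ and on the regrouping trick. Your route buys self-containedness: the retraction $\rho$ (killing $x_2,x_4$) is verified directly against the presentation~\eqref{pres} and the universal property of the free product $G_0$, with no appeal to the SAP structure of $G_0$. One point you should state explicitly rather than file under ``bookkeeping'': the splitting claim for $G\twoheadrightarrow G/F_{24}$ needs $G_{13}\cap F_{24}=1$, which is not purely formal --- it follows either from your own $\rho$ (which kills $F_{24}$ and restricts to an isomorphism on $G_{13}$) or from uniqueness of the normal form in $(F_{13}\times F_{24})\rtimes\Delta$; once that is said, the rest of your third paragraph is indeed routine.
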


\begin{proof}
Let~$F(2)$ be the pro-$2$ group on two generators~$a$ and $b$. We define an action~$\psi_0$ of $G_0$ on $F(2)$ by~$\psi_0(y_i)(a)\coloneq a^{-1}$ and~$\psi_0(y_i)(b)\coloneq b^{-1}$ for~$1\leq i \leq 3$. Similarly to Remark~\ref{rem semitriv}, we observe that~$G\simeq F(2)\rtimes_{\psi_0} G_0$. Therefore, we obtain two injections~$\varphi_{13}$ and~$\varphi_{24}$ from~$G_0$ to~$G$ defined by:
$$\varphi_{ij}(y_1y_2)=x_i, \quad \varphi_{ij}(y_1y_3)=x_j, \quad \text{and} \quad \varphi_{ij}(y_1)=x_0.$$
We also introduce the natural injections $\iota_{13}\colon G_{13}\to G$ and~$\iota_{24}\colon G_{24}\to G$. We infer the following commutative diagram:

\centering{
\begin{tikzcd}
                                                                                                                                                 &  & G_{24} \arrow[rrd, "\iota_{24}", hook] &  &   \\
G_0 \arrow[rrd, "\pi_{13}", two heads] \arrow[rrrr, "\varphi_{13}"'] \arrow[rru, "\pi_{24}", two heads] \arrow[rrrr, "\varphi_{24}"] &  &                                        &  & G \\
                                                                                                                                                 &  & G_{13} \arrow[rru, "\iota_{13}", hook] &  &  
\end{tikzcd}} 

\justifying
Since $\varphi_{13}$ and $\varphi_{24}$ are both injective, the maps~$\pi_{13}$ and~$\pi_{24}$ are isomorphisms. Similarly, we show that~$F_{13}\simeq F_{24}\simeq F(2)$. This allows us to conclude.
\end{proof}
As a consequence, we infer that the subgroup of~$G$ generated by~$\{x_1,x_2,x_3,x_4\}$ is isomorphic to~$F_{13}\times F_{24}$, which is exactly~$G_\Gamma$.

\subsection{Cohomological results on~$G$}
We now study the cohomology ring~$H^\bullet(G)$. 

\begin{lemm}\label{minpres}
The algebra $\E(G)$ is Koszul. As a consequence, the presentation~\eqref{pres} is minimal and the group $G$ satisfies the Koszul property.
\end{lemm}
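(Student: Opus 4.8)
The plan is to exploit quadratic duality between $\E(G)$ and the cohomology ring, thereby reducing Koszulity of $\E(G)$ to a Fröberg-type statement about a monomial algebra. First I would record that, since $G$ is the $\Delta$-RAAG attached to $\Gamma=C_4$ and $\bz_0=(1,1,1,1)$, Theorem~\ref{gocha Delta Raag} gives $\E(G)\simeq\E_{\bz_0}$, the quotient of $\F_2\langle X_0,\dots,X_4\rangle$ by the quadratic ideal generated by $X_0^2$, the commutators $\lbrack X_u,X_v\rbrack$ for the four edges of $C_4$, and the elements $\lbrack X_0,X_k\rbrack+X_k^2$ for $1\le k\le 4$. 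In particular this presentation is quadratically defined (Remark~\ref{quad defined}), so by \cite[Theorem~F]{minac2021koszul} the ring $H^\bullet(G)$ is the quadratic dual $\E(G)^!$. It then suffices to prove that $\E(G)^!$ is Koszul, since Koszulity is invariant under quadratic duality (see \cite[Proposition~1]{hamza2023extensions}); this will simultaneously yield ``$G$ satisfies the Koszul property'' and ``$\E(G)$ is Koszul''.

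Second, I would compute $\E(G)^!$ explicitly. Writing $\xi_0,\dots,\xi_4$ for the dual basis, an orthogonality computation against the nine defining relations shows that $\E(G)^!$ is the commutative $\F_2$-algebra
\[
\F_2[\xi_0,\xi_1,\xi_2,\xi_3,\xi_4]/\bigl(\xi_1\xi_3,\ \xi_2\xi_4,\ \xi_k^2+\xi_0\xi_k\ (1\le k\le 4)\bigr),
\]
where the vanishing products $\xi_1\xi_3,\xi_2\xi_4$ come from the two non-edges $\{1,3\},\{2,4\}$ of $C_4$, and the relations $\xi_k^2=\xi_0\xi_k$ come from dualizing $\lbrack X_0,X_k\rbrack+X_k^2$. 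As a consistency check, $\dim_{\F_2}\E(G)^!_1=5$ and $\dim_{\F_2}\E(G)^!_2=9$, matching the Hilbert series $\tfrac{(1+2t)^2}{1-t}$ predicted from $gocha(G,t)=\tfrac{1+t}{(1-2t)^2}$ of Corollary~\ref{gocha Delta Raag2}, since $\Gamma(-t)=(1-2t)^2$ for $C_4$.

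Third, I would establish Koszulity of $\E(G)^!$ by a Gröbner argument. Ordering the variables by $\xi_1>\xi_2>\xi_3>\xi_4>\xi_0$, the six listed generators have leading terms $\xi_1\xi_3,\xi_2\xi_4,\xi_1^2,\xi_2^2,\xi_3^2,\xi_4^2$, and every $S$-polynomial reduces to $0$: the only non-coprime overlaps, such as the pair $(\xi_1^2+\xi_0\xi_1,\ \xi_1\xi_3)$, collapse to a multiple of $\xi_0\xi_1\xi_3$, which reduces via $\xi_1\xi_3$. Hence these elements form a quadratic Gröbner basis, the initial ideal is the quadratic monomial ideal $(\xi_1\xi_3,\xi_2\xi_4,\xi_1^2,\xi_2^2,\xi_3^2,\xi_4^2)$, and the corresponding monomial algebra is Koszul by Fröberg's theorem \cite{froberg1975determination}; being $G$-quadratic, $\E(G)^!$ is therefore Koszul. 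Dualizing back gives that $\E(G)$ is Koszul. Finally, minimality of \eqref{pres} is immediate from $\dim_{\F_2}H^1(G)=5$ (the number of generators) and $\dim_{\F_2}H^2(G)=9$ (the number of relations), so no generator or relation is redundant.

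The main obstacle is that one cannot prove Koszulity of $\E(G)$ directly by Gröbner methods, because $\E(G)$ is itself \emph{not} $G$-quadratic. Indeed, the overlap of the two edge relations sharing the vertex $2$ rewrites the word $X_1X_2X_3$ into the two distinct normal forms $X_2X_1X_3$ and $X_1X_3X_2$, forcing a genuine degree-$3$ relation $X_1X_3X_2=X_2X_1X_3$ that cannot be resolved quadratically precisely because $\{1,3\}$ is a non-edge of $C_4$ and so carries no quadratic relation. This is exactly what compels the argument to pass through the commutative dual algebra, where the same non-edges instead produce the honest monomial relations $\xi_1\xi_3=\xi_2\xi_4=0$. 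Carrying out that dual computation correctly and verifying that its initial ideal is quadratic is the crux of the proof.
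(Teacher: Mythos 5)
Your core computation is correct, and your route is genuinely different from the paper's. The paper never leaves the noncommutative side: it shows directly that $\E(G)\simeq \E_{\bz_0}$ is PBW, by choosing the monomial order $x_0>x_1>x_3>x_2>x_4$ and checking confluence of the critical monomials $X_0^2X_i$ and $X_0X_uX_v$ (with $u\in\{1,3\}$, $v\in\{2,4\}$); Koszulity of $G$, the computation of $H^\bullet(G)$ (Lemma~\ref{cohomG}), and minimality of~\eqref{pres} then follow from \cite[Proposition~$1$]{hamza2023extensions} and Corollary~\ref{gocha Delta Raag2}. You instead compute the quadratic dual of $\E_{\bz_0}$ explicitly as the commutative algebra $\F_2[\xi_0,\dots,\xi_4]/(\xi_1\xi_3,\ \xi_2\xi_4,\ \xi_k^2+\xi_0\xi_k)$ --- this computation is correct and agrees with Lemma~\ref{cohomG} --- and prove it is Koszul by a commutative Gr\"obner basis argument. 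Your route buys a very transparent commutative verification (six quadrics, two easy S-pairs); the paper's route stays on one side of the duality and isolates the real trick, namely an ordering adapted to the bipartition of $C_4$.

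There is, however, a logical misstep you must repair. Your opening claim --- ``the presentation is quadratically defined, so by \cite[Theorem~F]{minac2021koszul} the ring $H^\bullet(G)$ is the quadratic dual $\E(G)^!$'' --- is a misapplication. In the paper, that theorem is invoked (proof of Proposition~\ref{koszulpyt}) only \emph{after} Koszulity of $H^\bullet(G)$ has been established, which there comes from $G$ being in~$\PP$; your $G$ is precisely not in~$\PP$, and quadratic definedness alone does not identify the full cohomology ring with $\E(G)^!$ --- that identification is exactly what Koszulity delivers, so as written the step is circular. The fix is only a reordering: Theorem~\ref{gocha Delta Raag} makes $\E(G)$ a quadratic algebra, so you may form its quadratic dual purely algebraically, run your Gr\"obner argument on it, conclude that $\E(G)$ is Koszul by duality invariance, and only then invoke \cite[Proposition~$1$]{hamza2023extensions} to obtain that $G$ satisfies the Koszul property, that $H^\bullet(G)\simeq \E(G)^!$, and hence that $\dim_{\F_2}H^1(G)=5$ and $\dim_{\F_2}H^2(G)=9$, which gives minimality. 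With that ordering your proof is complete.

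Finally, your ``main obstacle'' paragraph is false: $\E(G)$ \emph{is} PBW. G-quadraticity depends on the choice of order, and the non-confluent overlap $X_1X_2X_3$ you exhibit arises only for orders in which the edges of the square are not oriented coherently. With the order $x_0>x_1>x_3>x_2>x_4$ used in the paper, every edge relation has leading monomial $X_uX_v$ with $u\in\{1,3\}$ and $v\in\{2,4\}$, so no two edge relations overlap at all, and the only critical monomials involve $X_0$ and are confluent. Thus the direct noncommutative argument you declare impossible is exactly the paper's proof. This does not affect the validity of your own argument, but the claim should be deleted.
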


\begin{proof}
Let us consider $\E$ the noncommutative polynomials on~$\{X_0,X_1,X_2,X_3,X_4\}$ over~$\F_2$ where each~$X_i$ has weight $1$. A graded $\F_2$-basis of $\E$ is given by monomials, and we endow them with an order induced by~$x_0>x_1>x_3>x_2>x_4$. To show that the algebra $\E(G)$ is Koszul, it is sufficient to show that it is PBW. This property comes from Poincaré-Birkhoff-Witt, and we recall it briefly. We refer to \cite[\S $4.1$]{loday2012algebraic} or~\cite{minac2021koszul}. From Theorem \ref{gocha Delta Raag}, we have $\E(G)\simeq \E_{\bz_0}$. Thus $\E(G)$ is presented by $5$ generators $\{X_0,X_1,X_2,X_3,X_4\}$ and $9$ relations $\Rr\coloneq \{X_0^2,\lbrack X_u,X_v\rbrack; \lbrack X_0,X_i\rbrack+X_i^2\}$, with $u\in \{1,3\}$, $v\in\{2,4\}$ and $1\leq i \leq 4$. The leading monomials of these relations are given by:
$$M(\Rr)\coloneq \{X_0^2, X_1X_2, X_3X_2,X_3X_4,X_1X_4, X_0X_1,X_0X_2,X_0X_3,X_0X_4\}.$$
A monomial is said to be reduced if it does not contain any leading monomial as a submonomial. We define critical monomials by monomials of the form~$X_uX_vX_w$ where $X_uX_v$ and $X_vX_w$ are in $M(\Rr)$. In our example, the critical monomials are exactly 
$$C(\Rr)\coloneq \{X_0^2X_i, X_0X_uX_v, \quad 1\leq i \leq 4, \text{ and } u \in \{1,3\}, v\in \{2,4\}\}.$$
Each relation in $\Rr$ can be interpreted as a reduction rule which substitutes its leading monomial in $M(\Rr)$ to a sum of lower monomials. There are two ways to write a critical monomial as a sum of reduced monomials, with respect to reduction rules. This is technical; however, we can at least say that the first way is to reduce it from the left to the right, and the second way is to reduce it from the right to the left. For instance, we refer to~\cite[§2.4]{minac2021koszul} or \cite[§4.3.5]{loday2012algebraic} for further details. These reductions allow us to construct, for each critical monomial, a diagram. This diagram starts with a vertex given by a critical monomial and ends with at most two vertices. Each ending vertex is given by a sum of reduced monomials. We say that a critical monomial is confluent if its associated diagram finishes with one vertex, i.e.\ the two sums of reduced monomials coincide. The algebra $\E(G)$ is PBW if every critical monomial is confluent.

In our example, we observe that every critical monomial is confluent, so $\E(G)$ is PBW. For instance, we apply the previous reduction rules to the critical monomial~$X_0X_1X_2$. We infer the following diagram:

\center{
\begin{tikzcd}
                                      & X_0X_1X_2 \arrow[ld] \arrow[rd] &                                       \\
(X_1X_0+X_1^2)X_2 \arrow[dd]          &                                 & X_0X_2X_1 \arrow[d]                   \\
                                      &                                 & (X_2X_0+X_2^2)X_1 \arrow[d]           \\
X_1(X_2X_0+X_2^2)+X_1^2X_2 \arrow[rd] &                                 & X_2(X_1X_0+X_1^2)+X_2^2X_1 \arrow[ld] \\
                                      & X_2X_1X_0+X_2^2X_1+X_2X_1^2     &                                      
\end{tikzcd}}

\justifying 
%Similarly, we show that the monomial $X_0^2X_u$ is confluent to zero. 
From \cite[Proposition $1$]{hamza2023extensions} or \cite{leoni2024zassenhaus}, we deduce that the group~$G$ satisfies the Koszul property. As a consequence of Corollary~\ref{gocha Delta Raag2}, the gocha and Poincaré series of $G$ are given by:
$$gocha(G,t)\coloneq \frac{1+t}{1-4t+4t^2}, \quad \text{and} \quad H^\bullet(G,t)\coloneq \sum_n \dim_{\F_2} H^n(G)t^n= \frac{1+4t+4t^2}{1-t}.$$
Therefore the presentation \eqref{pres} is minimal.
\end{proof}

Let us denote by $\chi_0$ (resp. $\psi_i$) the character associated to $x_0$ (resp. $x_i$).
\begin{lemm}\label{cohomG}
The algebra $H^\bullet(G)$ is the quadratic dual of $\E(G)$. Concretely, the algebra~$H^\bullet(G)$ is generated by the generators $\chi_0$ and $\psi_i$ for $1\leq i\leq 4$, and relations:
$$\chi_0\cup \psi_i+\psi_i\cup \chi_0, \quad \psi_i\cup \psi_j+\psi_j\cup \psi_i, \quad \chi_0\cup \psi_i+\psi_i^2,\quad \psi_1\cup \psi_3, \quad \psi_2\cup \psi_4,$$
with $1\leq i<j\leq 4$. Thus a basis of $H^2(G)$ is given by:
$$\chi_0^2, \quad \chi_0\cup \psi_i=\psi_i^2, \quad \psi_1\cup \psi_2, \quad \psi_1\cup \psi_4, \quad \psi_3\cup \psi_2, \quad \psi_3\cup \psi_4.$$
\end{lemm}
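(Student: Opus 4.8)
The plan is to obtain the statement from Koszul duality and then to compute the quadratic dual $\E(G)^!$ by hand. First, exactly as in the proof of Proposition~\ref{koszulpyt}, I would invoke Lemma~\ref{minpres}, which shows that $\E(G)$ is Koszul, together with Theorem~\ref{gocha Delta Raag} and Remark~\ref{quad defined}, which show that the presentation \eqref{pres} is quadratically defined. By \cite[Theorem $F$]{minac2021koszul} and \cite[Proposition $1$]{hamza2023extensions}, this yields an isomorphism of graded $\F_2$-algebras $H^\bullet(G)\simeq \E(G)^!$, under which the cup product corresponds to the multiplication of the quadratic dual. In degree one this identifies $H^1(G)$ with $V^*$, where $V=\E_1(G)$ has basis $\{X_0,\dots,X_4\}$; the dual basis is precisely $\{\chi_0,\psi_1,\dots,\psi_4\}$, which accounts for the generators.

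Second, I would compute $\E(G)^!$ explicitly. By Theorem~\ref{gocha Delta Raag}, $\E(G)\simeq\E_{\bz_0}$ is the quadratic algebra on $V$ whose relation space $R\subseteq V\otimes V$ is spanned by the nine elements of $\Rr$, namely $X_0^2$, the four commutators $X_uX_v+X_vX_u$ with $u\in\{1,3\}$ and $v\in\{2,4\}$ (the edges of $C_4$), and $X_0X_i+X_iX_0+X_i^2$ for $1\le i\le 4$. The quadratic dual has relation space $R^\perp\subseteq V^*\otimes V^*$, the annihilator of $R$ for the canonical pairing. Writing a general element as $\sum_{a,b}c_{ab}\,e_a\otimes e_b$ with $e_0=\chi_0$ and $e_i=\psi_i$, and pairing against the nine generators of $R$, I get the nine independent conditions $c_{00}=0$, $c_{uv}=c_{vu}$ on each edge, and $c_{0i}+c_{i0}+c_{ii}=0$ for $1\le i\le 4$, so that $\dim_{\F_2}R^\perp=25-9=16$. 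Then I would check that the sixteen listed relations, namely $\chi_0\cup\psi_i+\psi_i\cup\chi_0$ and $\chi_0\cup\psi_i+\psi_i^2$ for $1\le i\le 4$, $\psi_i\cup\psi_j+\psi_j\cup\psi_i$ for $1\le i<j\le 4$, and $\psi_1\cup\psi_3$, $\psi_2\cup\psi_4$, all lie in $R^\perp$ and are linearly independent, since they occupy pairwise disjoint blocks of the coordinates $c_{ab}$. As there are exactly sixteen of them, they form a basis of $R^\perp$ and hence a full set of relations for $\E(G)^!$.

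Finally, for the basis of $H^2(G)=(\E(G)^!)_2=(V^*\otimes V^*)/R^\perp$, I would read the relations as rewriting rules: $e_i\otimes e_0$ and $e_i\otimes e_i$ reduce to $\chi_0\cup\psi_i$, each edge pair $e_i\otimes e_j$ reduces to a single representative, while the two diagonals $\psi_1\cup\psi_3$ and $\psi_2\cup\psi_4$ reduce to zero. This leaves the nine classes $\chi_0^2$, $\chi_0\cup\psi_i=\psi_i^2$ for $1\le i\le 4$, and $\psi_1\cup\psi_2,\psi_1\cup\psi_4,\psi_3\cup\psi_2,\psi_3\cup\psi_4$; since $\dim_{\F_2}H^2(G)=9$ (from the Poincaré series $\frac{1+4t+4t^2}{1-t}$ of Lemma~\ref{minpres}) they form a basis. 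The only genuinely non-formal input is the Koszul-duality isomorphism $H^\bullet(G)\simeq\E(G)^!$; everything afterwards is orthogonality bookkeeping. The one point to watch is that $\chi_0^2=e_0\otimes e_0$ must stay \emph{outside} $R^\perp$ (this is forced by the condition $c_{00}=0$), so that it survives in $H^2(G)$, in contrast to $\chi_0\cup\psi_i$, which is made equal to $\psi_i^2$ by a relation.
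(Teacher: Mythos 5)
Your proposal is correct and follows essentially the same route as the paper: both identify $H^\bullet(G)$ with the quadratic dual of $\E(G)$ via the Koszulity established in Lemma~\ref{minpres} (citing \cite[Proposition~$1$]{hamza2023extensions}), and both then determine the quadratic dual by a dimension count. The only difference is that the paper compresses the computation into the phrase ``by a dimension argument,'' whereas you spell it out — the annihilator conditions $c_{00}=0$, $c_{uv}=c_{vu}$ on edges, $c_{0i}+c_{i0}+c_{ii}=0$, the count $\dim_{\F_2}R^{\perp}=25-9=16$ matching the sixteen listed relations, and the resulting nine-dimensional degree-two part — which is exactly the argument the paper leaves implicit.
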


\begin{proof}
In Lemma \ref{minpres}, we showed that the algebra $\E(G)$ is Koszul. Thus $H^\bullet(G)$ is the quadratic dual of $\E(G)$. We refer to~\cite[Proposition~$1$]{hamza2023extensions}. We compute the quadratic dual of $\E(G)$. We refer to \cite[Chapter $3$, $\S 3.2.2$]{loday2012algebraic} for further details.

By a dimension argument, we observe that the set
$$\{\chi_0\cup \psi_i+\psi_i\cup \chi_0, \quad \psi_i\cup \psi_j+\psi_j\cup \psi_i, \quad \chi_0\cup \psi_i+\psi_i^2,\quad \psi_1\cup \psi_3, \quad \psi_2\cup \psi_4\}$$
defines all relations for the quadratic dual of $\E(G)$. We conclude that the quadratic dual of $\E(G)$ has a presentation with generators $\chi_0, \psi_i$ and relations defined above.
\end{proof}

Lemma \ref{cohomG} allows us to infer:

\begin{coro}\label{cohomology structure}
We have the following isomorphisms
\begin{equation*}
\begin{aligned}
H^1(G)\simeq & H^1(G_{13})\bigoplus H^1(F_{24})\simeq H^1(G_{24})\bigoplus H^1(F_{13}),
\\ H^2(G)\simeq & H^2(G_{13})\bigoplus \left( H^1(G_{13})\cup \psi_2\right) \bigoplus \left( H^1(G_{13})\cup \psi_4\right),
\end{aligned}
\end{equation*}
where we identify 

$\bullet$ $H^1(G_{ij})$ and $H^1(F_{ij})$ with the subgroups of $H^1(G)$ generated by $\{\chi_0, \psi_i, \psi_j\}$ and~$\{\psi_i, \psi_j\}$, 

$\bullet$ $H^2(G_{13})$ with the subgroup of $H^2(G)$ generated by $\chi_0^2$, $\chi_0\cup \psi_1=\psi_1^2$ and $\chi_0\cup \psi_3=\psi_3^2$.
\end{coro}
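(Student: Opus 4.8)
The plan is to derive both isomorphisms purely from the explicit presentation of $H^\bullet(G)$ in Lemma~\ref{cohomG} together with the semidirect-product decompositions of Lemma~\ref{decomposition G}; each statement is just a reorganization of an $\F_2$-basis. First I would fix the identifications of external cohomologies with subspaces of $H^\bullet(G)$. Since $G\simeq F_{24}\rtimes_{\delta_F}G_{13}$ by Lemma~\ref{decomposition G}, the projection $G\to G_{13}$ with kernel $F_{24}$ retracts the inclusion $\iota_{13}\colon G_{13}\hookrightarrow G$, so inflation $H^\bullet(G_{13})\to H^\bullet(G)$ is a split injection; I identify $H^\bullet(G_{13})$ with its image, which in degree one is generated by $\chi_0,\psi_1,\psi_3$ (the classes $\psi_2,\psi_4$ restrict to $0$ on $G_{13}$). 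Symmetrically, the decomposition $G\simeq F_{13}\rtimes_{\delta_{F'}}G_{24}$ identifies $H^\bullet(G_{24})$ with the subalgebra on $\chi_0,\psi_2,\psi_4$, and since $F_{13},F_{24}$ are free pro-$2$ on $\{x_1,x_3\}$, $\{x_2,x_4\}$, one has $H^1(F_{13})=\langle\psi_1,\psi_3\rangle$ and $H^1(F_{24})=\langle\psi_2,\psi_4\rangle$.

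For the degree-one statement I would then simply note that the basis $\{\chi_0,\psi_1,\psi_2,\psi_3,\psi_4\}$ of $H^1(G)$ from Lemma~\ref{cohomG} splits as $\{\chi_0,\psi_1,\psi_3\}\sqcup\{\psi_2,\psi_4\}$. These two subsets span $H^1(G_{13})$ and $H^1(F_{24})$ respectively and meet only in $0$, giving the first decomposition; the symmetric partition $\{\chi_0,\psi_2,\psi_4\}\sqcup\{\psi_1,\psi_3\}$ yields the second.

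For the degree-two statement I would partition the nine-element basis
\[
\chi_0^2,\ \psi_1^2,\ \psi_2^2,\ \psi_3^2,\ \psi_4^2,\ \psi_1\cup\psi_2,\ \psi_1\cup\psi_4,\ \psi_3\cup\psi_2,\ \psi_3\cup\psi_4
\]
of $H^2(G)$ into the triples $\{\chi_0^2,\psi_1^2,\psi_3^2\}$, $\{\psi_2^2,\psi_1\cup\psi_2,\psi_3\cup\psi_2\}$ and $\{\psi_4^2,\psi_1\cup\psi_4,\psi_3\cup\psi_4\}$. Using the relation $\chi_0\cup\psi_i=\psi_i^2$, the first triple is the degree-two part of $H^\bullet(G_{13})$ (note $\psi_1\cup\psi_3=0$, consistent with $G_{13}$ carrying no edge), while the second and third are exactly $H^1(G_{13})\cup\psi_2=\langle\chi_0\cup\psi_2,\psi_1\cup\psi_2,\psi_3\cup\psi_2\rangle$ and $H^1(G_{13})\cup\psi_4$. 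Because the three triples partition a basis, the three subspaces are linearly independent and sum to $H^2(G)$, which is the asserted decomposition.

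The only point requiring genuine care, rather than pure bookkeeping, is the legitimacy of the identifications in the first step: that inflation along the retractions of Lemma~\ref{decomposition G} is injective with the stated image, and that the cup products $H^1(G_{13})\cup\psi_2$ and $H^1(G_{13})\cup\psi_4$ do not collapse to lower dimension. Both are guaranteed by Lemma~\ref{cohomG}, since the classes involved appear literally as distinct basis vectors of $H^2(G)$, so no unexpected linear relation can occur; the dimension count $3+3+3=9=\dim_{\F_2}H^2(G)$, matching the coefficient of $t^2$ in $\tfrac{1+4t+4t^2}{1-t}$, confirms that the sum exhausts $H^2(G)$.
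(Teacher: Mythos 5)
Your proof is correct and follows essentially the same route as the paper: the corollary is stated there as an immediate consequence of Lemma~\ref{cohomG}, the intended argument being precisely your partition of the explicit bases of $H^1(G)$ and $H^2(G)$ into the spans $\{\chi_0,\psi_1,\psi_3\}\cup\{\psi_2,\psi_4\}$ and $\{\chi_0^2,\psi_1^2,\psi_3^2\}\cup\{\psi_2^2,\psi_1\cup\psi_2,\psi_3\cup\psi_2\}\cup\{\psi_4^2,\psi_1\cup\psi_4,\psi_3\cup\psi_4\}$. Your additional verification that these identifications agree with inflation along the retractions of Lemma~\ref{decomposition G} is a sound (and slightly more careful) justification of what the paper leaves implicit.
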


\subsection{The Kernel Unipotent property}
In this subsection, we show that $G$ satisfies the Kernel Unipotent property. It is strongly inspired by \cite[§$2$]{minavc2015kernel}.

We begin with the following lemma:

\begin{lemm}\label{kerfree}
The group $G_0$ satisfies the Kernel Unipotent property.
\end{lemm}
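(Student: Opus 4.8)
The plan is to establish the two inclusions of $G_{0,\langle n\rangle}=G_{0,n}$ separately, for every $n$. The inclusion $G_{0,n}\subseteq G_{0,\langle n\rangle}$ is formal and holds for any pro-$2$ group: a homomorphism $\rho\colon G_0\to \mathbb{U}_n$ extends by linearity and continuity to an algebra map $\overline{\rho}\colon E(G_0)\to M_n(\F_2)$ carrying the augmentation ideal into the strictly upper-triangular matrices, whence $\overline{\rho}(E_n(G_0))=0$ because the product of $n$ strictly upper-triangular $n\times n$ matrices vanishes. Thus $g-1\in E_n(G_0)$ forces $\rho(g)=\mathbb{I}_n$, and intersecting over all $\rho$ gives $G_{0,n}\subseteq G_{0,\langle n\rangle}$.

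For the reverse inclusion I would fix $g\notin G_{0,n}$ and manufacture a single $\rho\colon G_0\to \mathbb{U}_n$ with $\rho(g)\neq \mathbb{I}_n$. Setting $Y_i\coloneq y_i-1$, the relations $y_i^2=1$ read $Y_i^2=0$ over $\F_2$; the obvious surjection $\F_2\langle\langle Y_1,Y_2,Y_3\rangle\rangle/(Y_1^2,Y_2^2,Y_3^2)\twoheadrightarrow E(G_0)$, sending $Y_i$ to the class of $y_i-1$, is then an isomorphism, as one checks on the associated graded by comparing with the gocha series $\frac{1+t}{1-2t}$ furnished by Corollary~\ref{gocha Delta Raag2} for the SAP group $G_0$. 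In this model $E_n(G_0)$ is the part of degree $\geq n$, and a topological basis is given by the reduced words $Y_{i_1}\cdots Y_{i_k}$ with $i_l\neq i_{l+1}$. Since $g\notin G_{0,n}$, the class of $g-1$ in $E(G_0)/E_n(G_0)$ is nonzero, so it has a nonzero homogeneous component in some degree $j\leq n-1$; I fix a reduced word $w=Y_{i_1}\cdots Y_{i_j}$ occurring there with coefficient $1$.

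The representation is then built from $w$. Let $e_{l,l+1}$ denote the $(j+1)\times(j+1)$ matrix with a single $1$ in position $(l,l+1)$, and define $\phi\colon \F_2\langle\langle Y_1,Y_2,Y_3\rangle\rangle\to M_{j+1}(\F_2)$ on generators by $Y_i\mapsto \sum_{l\colon i_l=i}e_{l,l+1}$, extended multiplicatively and continuously. The decisive computation is
\begin{equation*}
\phi(Y_i)^2=\sum_{l\colon i_l=i_{l+1}=i}e_{l,l+2},
\end{equation*}
which vanishes for every $i$ precisely because $w$ is reduced, so that $i_l\neq i_{l+1}$ always. Hence $\phi$ descends to $E(G_0)$, and restricting it to $G_0\subseteq E(G_0)^\times$ yields $\rho\colon G_0\to \mathbb{U}_{j+1}$ with $\rho(y_i)=\mathbb{I}_{j+1}+\phi(Y_i)$ an involution. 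Using that $e_{l,l+1}e_{l',l'+1}$ equals $e_{l,l+2}$ when $l'=l+1$ and $0$ otherwise, only length-$j$ words can contribute to the $(1,j+1)$ entry of $\phi$, and among them only $w$ does; therefore $\rho(g)_{1,j+1}$ equals the coefficient of $w$ in $g$, namely $1$. Composing with the block embedding $\mathbb{U}_{j+1}\hookrightarrow \mathbb{U}_n$, legitimate since $j+1\leq n$, produces $\rho\colon G_0\to \mathbb{U}_n$ with $\rho(g)\neq \mathbb{I}_n$, so $g\notin G_{0,\langle n\rangle}$.

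The crux — and the only point where the structure of $G_0$ as the free pro-$2$ product $\langle y_1\rangle\coprod\langle y_2\rangle\coprod\langle y_3\rangle$ of copies of $\Delta$ really enters — is that $\phi$ must define a homomorphism out of $G_0$, not merely out of a free pro-$2$ group, i.e.\ it must send each generator to an involution. This is exactly what the vanishing $\phi(Y_i)^2=0$ guarantees, and that vanishing holds only because the detecting word $w$ is reduced (no consecutive repeats), which is automatic here since $Y_i^2=0$. Everything else is the standard Magnus-representation bookkeeping of \cite[§$2$]{minavc2015kernel}: the multiplicativity of $\phi$, the entrywise computation, and the formal easy inclusion.
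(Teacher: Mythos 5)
Your proof is correct and follows essentially the same route as the paper's: the Magnus-type identification $E(G_0)\simeq \F_2\langle\langle Y_1,Y_2,Y_3\rangle\rangle/(Y_1^2,Y_2^2,Y_3^2)$, the choice of a reduced word with nonzero coefficient in $g-1$, the matrices $\phi(Y_i)=\sum_{l\colon i_l=i}e_{l,l+1}$ (the paper's $M_i$), and the crucial observation that reducedness forces $\phi(Y_i)^2=0$ so that the generators go to involutions and the word is detected in the corner entry. The only cosmetic differences are that you detect a component in some degree $j\leq n-1$ and then embed $\mathbb{U}_{j+1}$ block-diagonally into $\mathbb{U}_n$, where the paper works directly in degree $n-1$, and that you spell out the easy inclusion and the gocha-series justification of the isomorphism, which the paper treats as known.
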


\begin{proof}
For every positive integer~$n$ and every~$g\in G_{0,n-1}\setminus G_{0,n}$, we construct a map~$\rho_g\colon G_0\to \mathbb{U}_n$ such that~$\rho_g(g)\neq \mathbb{I}_n$.

Let us recall that $G_0\simeq \Delta \coprod \Delta \coprod \Delta\coloneq \langle y_1\rangle \coprod \langle y_2\rangle \coprod \langle y_3\rangle$, where $y_i$ denotes an involution. Let us define $E$ by the set of noncommutative series on $\F_2$ over $Y_1$, $Y_2$ and $Y_3$, and~$I$ the closed two-sided ideal of~$E$ generated by $\{Y_1^2, Y_2^2, Y_3^2\}$. Here, every $Y_1$, $Y_2$ and $Y_3$ have weight~$1$. From the Magnus isomorphism, we infer the isomorphism~$\psi_{G_0}\colon E(G_0)\simeq E/I$, which maps~$y_i$ to~$1+Y_i$. In particular, if~$g\in G_{0,n-1}\setminus G_{0,n}$, then we can write:
$$\psi_{G_0}(g)\coloneq 1+\sum_W \epsilon_W(g)W+ W_{\geq n},$$
for $\epsilon_W(g)$ in $\F_2$, $W\coloneq Y_{i_1}Y_{i_2}\dots Y_{i_{n-1}}$ words satisfying $i_j\neq i_{j+1}$ with $1\leq j \leq n-2$, and~$W_{\geq n+1}$ a series of degree larger than~$n+1$. The previous form is unique. 

We introduce $\mathbb{M}_{n}$, the set of $n\times n$ matrices with coefficients in $\F_2$. We denote by~$\delta_{i,j}$ the elementary $n\times n$-matrix, which is equal to zero everywhere except in $(i,j)$. Let us define a morphism $\rho_W$ from $E(G_0)$ to $\mathbb{M}_n$ satisfying $\rho_{W}(W)=\delta_{1,n}$.

We fix a word $W\coloneq Y_{i_1}Y_{i_2}\dots Y_{i_{n-1}}$ in $E(G_0)$ satisfying $i_j\neq i_{j+1}$. And for every~$1\leq i \leq 3$, we define a map $\psi_i \colon [\![1;n-1]\!]\to \F_2$ by:
$$\psi_i(j)\coloneq 1, \text{ if } Y_{i_j}=Y_i, \quad \text{else } \psi_i(j)\coloneq 0.$$
We introduce the matrix 
$$M_i\coloneq \sum_{j=1}^{n-1} \psi_i(j) \delta_{j,j+1}.$$
Since for every $j$, we have $Y_{i_j}\neq Y_{i_{j+1}}$, we observe for every $1\leq i \leq 3$ the equality $M_i^2=0$.

Thus we define a morphism $\rho_W$ by $\rho_W(1)=\mathbb{I}_{n}$ and $\rho_W(Y_i)\coloneq M_i$. If $W'\coloneq Y_{u_1}\dots Y_{u_{n-1}}$ is a word also satisfying $u_j\neq u_{j+1}$, then we have:
$$\rho_W(W')=\psi_{u_1}(1)\dots \psi_{u_{n-1}}(n-1) \delta_{1,n}.$$
In particular $\rho_W(W')=\delta_{1,n}$ only if $W'=W$, else $\rho_W(W')=0$.

Let us now fix a nontrivial element $g$ in $G_{0,n-1}/ G_{0,n}$. Then there exists a word~$W$ of degree~$n-1$ such that~$\epsilon_W(g)\neq 0$. We define~$\rho_g$ the map induced by~$\rho_W$, i.e.\ ~$\rho_g(y_i)\coloneq \mathbb{I}_{n}+M_i$.
We observe that $\rho_g(y_i)^2=\mathbb{I}_{n}^2+M_i^2=\mathbb{I}_{n}$, and $\rho_g(Y_i)=\rho_W(Y_i)$. Thus we have:
$$\rho_g(g)=\mathbb{I}_{n}+\sum_{W'}\epsilon_{W'}(g)\rho_g(W')=\mathbb{I}_{n}+\epsilon_W(g)\rho_W(W)=\mathbb{I}_{n}+\delta_{1,n}\neq \mathbb{I}_{n},$$
where the sum is indexed by all monomials $W'$ in $E$ of degree~$n$.
\end{proof}

\begin{rema}
Without loss of generality, we can expand the proof of Lemma~\ref{kerfree} and show that if $G$ is a SAP group, then $G$ satisfies the Kernel Unipotent property.
\end{rema}

We can now prove the Kernel Unipotent property for $G$.

\begin{prop}
The group $G$ satisfies the Kernel Unipotent property.
\end{prop}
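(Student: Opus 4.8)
The plan is to establish the nontrivial inclusion $G_{\langle n\rangle}\subseteq G_n$, the reverse inclusion being automatic: a morphism $\rho\colon G\to\mathbb{U}_n$ extends to an algebra map $E(G)\to\mathbb{M}_n$ carrying $E_n(G)$ into the $n$-th power of the nilpotent radical, which vanishes, so $G_n\subseteq\ker\rho$. Concretely I must show that every $g\notin G_n$ satisfies $\rho(g)\neq\mathbb{I}_n$ for some $\rho\colon G\to\mathbb{U}_n$. Writing $g\in G_{m-1}\setminus G_m$ with $m\leq n$ and composing with an inclusion $\mathbb{U}_m\hookrightarrow\mathbb{U}_n$, it suffices to treat $g\in G_{n-1}\setminus G_n$, i.e.\ to separate the nonzero classes of $G_{n-1}/G_n\simeq\Ll_{n-1}(G)$ by the functionals $\bar g\mapsto\rho(g)_{1,n}$ attached to $\mathbb{U}_n$-representations.

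The key structural input is that the associated graded Lie algebra splits in degrees $\geq 2$. By Proposition~\ref{lower RAAGs} we have $\Ll_{n-1}(G)\simeq\Ll_{\Gamma,n-1}$ for $n-1\geq 2$, and since the underlying graph $\Gamma=C_4$ is the complete bipartite graph $K_{2,2}$ with parts $\{1,3\}$ and $\{2,4\}$, every bracket $[X_u,X_v]$ with $u\in\{1,3\}$ and $v\in\{2,4\}$ vanishes in $\Ll_\Gamma$. Hence there is no mixing between the two free directions:
\[
\Ll_{n-1}(G)\simeq\Ll_{n-1}(F_{13})\oplus\Ll_{n-1}(F_{24}),\qquad n-1\geq 2.
\]
Moreover the two retractions $r_{13}\colon G\to G_{13}$ and $r_{24}\colon G\to G_{24}$, coming from the decompositions of Lemma~\ref{decomposition G} (each killing $x_2,x_4$, respectively $x_1,x_3$, and fixing the remaining generators), restrict to the identity on their image; consequently $\mathrm{gr}(r_{13})$ and $\mathrm{gr}(r_{24})$ induce on this graded piece exactly the two projections onto the summands.

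With these facts the argument is short. For $n\leq 2$, any nonzero class in $\Ll_1(G)$ is separated by one of the five characters $\chi_0,\psi_1,\dots,\psi_4\in H^1(G)$, viewed as a morphism $G\to\mathbb{U}_2$. For $n\geq 3$, write the leading term of $g$ as $\bar g=\lambda_{13}+\lambda_{24}$ with $\lambda_{13}\in\Ll_{n-1}(F_{13})$ and $\lambda_{24}\in\Ll_{n-1}(F_{24})$; since $\bar g\neq 0$ at least one summand is nonzero, say $\lambda_{13}\neq 0$ (the other case is symmetric). Applying $r_{13}$ and using that it restricts to the identity on $G_{13}\supseteq F_{13}$, the class $\mathrm{gr}(r_{13})(\bar g)=\lambda_{13}$ is nonzero, so $r_{13}(g)\in (G_{13})_{n-1}\setminus (G_{13})_n$. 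By Lemma~\ref{decomposition G} we have $G_{13}\simeq G_0$, which satisfies the Kernel Unipotent property by Lemma~\ref{kerfree}; hence there is $\sigma\colon G_{13}\to\mathbb{U}_n$ with $\sigma(r_{13}(g))\neq\mathbb{I}_n$, and $\rho:=\sigma\circ r_{13}$ detects $g$.

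The only genuine difficulty is conceptual, and it is exactly what the splitting resolves. A direct attempt to detect a ``mixed'' leading term such as $X_2X_1$ by a single flag representation fails: the relation $[x_1,x_2]=1$ forces $\rho(x_1)$ and $\rho(x_2)$ to interact at the junction node in a way incompatible with a one-dimensional flag of size $n$, and one is driven toward a two-dimensional grid of larger size. The point is that such mixed associative monomials are never Lie elements of $\Ll(G)$, hence never occur as leading terms of group elements; in degrees $\geq 2$ every genuine leading term is purely of type $\{1,3\}$ or $\{2,4\}$ and is therefore visible through one of the two retractions onto a copy of $G_0$, for which the Kernel Unipotent property is already known.
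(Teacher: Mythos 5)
Your proof is correct and follows essentially the same route as the paper: both arguments rest on the two semidirect decompositions $G\simeq F_{24}\rtimes G_{13}\simeq F_{13}\rtimes G_{24}$ of Lemma~\ref{decomposition G} and on the Kernel Unipotent property of $G_0\simeq G_{13}\simeq G_{24}$ (Lemma~\ref{kerfree}), detecting a given element by composing one of the two retractions onto a copy of $G_0$ with a unipotent representation of $G_0$. The only difference is bookkeeping: the paper runs the case analysis on the component $h$ of $x=uh$ in the semidirect decomposition of $G/G_n$ (using Proposition~\ref{lower RAAGs} for the injection $F_{24}/F_{24,n}\hookrightarrow G_{24}/G_{24,n}$ when $h=1$), whereas you run it on the leading term in the splitting $\Ll_{n-1}(G)\simeq \Ll_{n-1}(F_{13})\oplus \Ll_{n-1}(F_{24})$, which is the same dichotomy seen at the level of the associated graded Lie algebra.
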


\begin{proof}
From Lemma~\ref{decomposition G}, we observe that for every positive integer~$n$ we have semi-direct product decompositions:
$$G/G_n\simeq F_{24}/F_{24,n} \rtimes_{\overline{\delta_{F}}} G_{13}/G_{13,n}\simeq F_{13}/F_{13,n} \rtimes_{\overline{\delta_{F'}}} G_{24}/G_{24,n}.$$ 
Let us consider $x\coloneq uh$ with $u\in F_{24}/F_{24,n}$ and $h\in G_{13}/G_{13,n}$,  a nontrivial element in~$G/G_n$. We need to construct a map $\rho_x\colon G/G_n \to \mathbb{U}_n$ such that $\rho_x(x)\neq \mathbb{I}_n$. For this purpose, we distinguish two cases:

$(i)$ Assume that $h\neq 1$. By Lemma \ref{kerfree}, the group $G_{13}$ satisfies the Kernel Unipotent property, thus we have a map $\eta_h\colon G_{13}/G_{13,n} \to \mathbb{U}_n$ such that $\eta_h(h)\neq \mathbb{I}_n$. Let us define $\rho_x$ by:
$$\rho_x|_{G_{13}/G_{13,n}}\coloneq \eta_h, \quad \text{and} \quad \rho_x|_{F_{24}/F_{24,n}}\coloneq \mathds{1}.$$
This map is well-defined, and we have $\rho_x(x)\coloneq \rho_x(uh)=\eta_h(h)\neq \mathbb{I}_n.$

$(ii)$ Now assume that $h=1$. Then $x\coloneq u$ is in $F_{24}/F_{24,n}$, and not trivial. Additionally, Proposition~\ref{lower RAAGs} gives us the injection~ $F_{24}/F_{24,n} \subset G_{24}/G_{24,n}$. Since~$G_{24}\simeq G_0$, we apply the same argument as~$(i)$, replacing~$G_{13}$ by~$G_{24}$ and~$F_{24}$ by~$F_{13}$. 
%By Lemma \ref{kerfree}, the group $G_1$ satisfies the Kernel Unipotent property, and so we have a map $\eta_u\colon G_1/G_{1,n}\to \mathbb{U}_n$ such that~$\eta_u(u)\neq \mathbb{I}_n$. Let us define $\rho_x$ by 
%$$\rho_x|_{F_1/F_{1,n}}\coloneq \eta_{u}|_{F_1/F_{1,n}}, \quad \rho_x|_{F_0/F_{0,n}}\coloneq \mathds{1}, \quad \text{and} \quad \rho_x(x_0)\coloneq \eta_u(x_0).$$
%Then $\rho_x$ is well defined, and $\rho_x(x)\coloneq \rho_x(u)=\eta_u(u)\neq \mathbb{I}_n$.
\end{proof}

\begin{rema}\label{kernel uni product}
Expanding the previous proof, we can show that if $G_1$ and $G_2$ check the Kernel Unipotent property, then $G_1\times G_2$ also satisfies it.  

Observe that $F(2)$ also satisfies this property. We refer for instance to~\cite[Theorem $2.6$]{minavc2015kernel}. Thus the group~$G_\Gamma\simeq F_{13}\times F_{24}$ satisfies the Kernel Unipotent property. This group is not a maximal pro-$2$ quotient of an absolute Galois group. We refer to~\cite[Theorem~$1.2$]{snopce2022right}.
\end{rema}

\subsection{The strong Massey Vanishing property}\label{results on SMVP}
In this subsection, we show that~$G$ satisfies the strong Massey Vanishing property. This subsection is heavily inspired by \cite[§$4.3$]{quadrelli2024massey} and we mostly use the same notations.
Let us also recall that the structure of~$H^\bullet(G)$ is given by Lemma~\ref{cohomG} and Corollary~\ref{cohomology structure}.

\begin{lemm}\label{cupzero}
Assume that $\alpha$ and $\alpha'$ are nontrivial elements in $H^1(G)$ satisfying~$\alpha\cup \alpha'=0$. We have the following alternative. Either:
\begin{itemize}
\item[$\bullet$] $\alpha$ and $\alpha'$ are in $H^1(G_{13})$ and different from $\chi_0$,
\item[$\bullet$] $\alpha$ and $\alpha'$ are in $H^1(G_{24})$ and different from $\chi_0$,
\item[$\bullet$] $\alpha$ and $\alpha'$ are neither in $H^1(G_{13})$ nor in $H^1(G_{24})$, but they check the equality:
$$\alpha'=\alpha+\chi_0.$$
\end{itemize}
\end{lemm}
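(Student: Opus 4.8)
The plan is to work inside the explicit presentation of $H^\bullet(G)$ supplied by Lemma~\ref{cohomG} and Corollary~\ref{cohomology structure}. Since we are over $\F_2$, that lemma shows $H^\bullet(G)$ is commutative, with $H^2(G)$ the $9$-dimensional space spanned by $\chi_0^2$, the four classes $\psi_i^2=\chi_0\cup\psi_i$ ($1\le i\le 4$), and the four ``edge'' products $\psi_1\psi_2,\psi_1\psi_4,\psi_3\psi_2,\psi_3\psi_4$, while $\psi_1\cup\psi_3=\psi_2\cup\psi_4=0$. Writing $\alpha=a\chi_0+\sum_i p_i\psi_i$ and $\alpha'=a'\chi_0+\sum_i p_i'\psi_i$ with coefficients in $\F_2$, I would expand $\alpha\cup\alpha'$ in this basis. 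Vanishing of the coefficient of $\chi_0^2$ gives $aa'=0$; vanishing of the coefficient of each $\psi_i^2$ gives $ap_i'+a'p_i+p_ip_i'=0$; and vanishing of each edge product $\psi_i\psi_j$ gives $p_ip_j'+p_jp_i'=0$, one equation for every edge $\{i,j\}$ of the square $C_4$, namely $\{1,2\},\{2,3\},\{3,4\},\{4,1\}$ (there is no constraint from the non-edges $\{1,3\},\{2,4\}$, as those products already vanish).

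The key reformulation is to package the coefficients into vectors $v_i\coloneq(p_i,p_i')\in\F_2^2$ and $w\coloneq(a,a')\in\F_2^2$. Over $\F_2$ the edge equation $p_ip_j'+p_jp_i'=0$ is exactly the statement that $v_i,v_j$ are linearly dependent, i.e.\ one is $0$ or they are equal; so the four edge relations say precisely that $v_i$ and $v_j$ are parallel whenever $\{i,j\}$ is an edge of $C_4$. The relation attached to $\psi_i^2$ reads, according to the value of $v_i$: no constraint if $v_i=0$; $a'=0$ if $v_i=(1,0)$; $a=0$ if $v_i=(0,1)$; and $a+a'=1$ if $v_i=(1,1)$. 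Finally $aa'=0$ says $w\neq(1,1)$, while $\alpha\neq0$ means $a=1$ or some $p_i=1$ and $\alpha'\neq0$ means $a'=1$ or some $p_i'=1$.

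With this dictionary the proof becomes a short finite analysis. The three nonzero vectors $(1,0),(0,1),(1,1)$ of $\F_2^2$ are pairwise non-parallel, and two nonzero $v_i,v_j$ at adjacent vertices must be equal; hence vertices carrying distinct nonzero values are pairwise non-adjacent. As $C_4$ has independence number $2$, with independent pairs $\{1,3\}$ and $\{2,4\}$, at most two distinct nonzero values occur. If two occur, at vertices $i,j$, then $\{i,j\}\in\{\{1,3\},\{2,4\}\}$ and the two remaining vertices, each adjacent to both $i$ and $j$, are forced to vanish, so the support is exactly $\{1,3\}$ or $\{2,4\}$: support $\{1,3\}$ gives $v_2=v_4=0$, i.e.\ $\alpha,\alpha'\in H^1(G_{13})$ (first alternative), and support $\{2,4\}$ gives the second. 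There remains the case of at most one distinct nonzero value. The all-zero case forces $\alpha=\alpha'=\chi_0$, whence $w=(1,1)$, contradicting $aa'=0$. For a single value $v^\ast$: if $v^\ast=(1,0)$ then $a'=0$ and all $p_j'=0$, so $\alpha'=0$, excluded, and symmetrically $v^\ast=(0,1)$ is excluded; hence $v^\ast=(1,1)$, which forces $a+a'=1$, that is $\alpha'=\alpha+\chi_0$. Splitting by the support $S\coloneq\{i:v_i\neq0\}$: $S\subseteq\{1,3\}$ gives the first alternative, $S\subseteq\{2,4\}$ the second, and if $S$ meets both $\{1,3\}$ and $\{2,4\}$ then $\alpha,\alpha'\notin H^1(G_{13})\cup H^1(G_{24})$ with $\alpha'=\alpha+\chi_0$, which is the third.

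The main obstacle is purely bookkeeping: one must keep the non-degeneracy conditions $\alpha\neq0$, $\alpha'\neq0$ and $aa'=0$ in play throughout, since it is exactly these that kill the spurious single-value configurations $v^\ast\in\{(1,0),(0,1)\}$ and force $v^\ast=(1,1)$ in the interesting case. The combinatorial heart—that only an independent set of $C_4$ can carry two different values—becomes immediate once the edge equations are read as parallelism in $\F_2^2$, so I expect no serious difficulty beyond arranging the casework so that each surviving configuration is unambiguously matched to one of the three stated alternatives.
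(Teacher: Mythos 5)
Your proof is correct, and it rests on the same foundation as the paper's own argument---expanding $\alpha\cup\alpha'$ in the explicit basis of $H^2(G)$ from Lemma~\ref{cohomG} and solving the resulting quadratic system over $\F_2$ by cases---but the way you organize and complete the computation is genuinely different. The paper breaks the symmetry: it writes $\alpha=a\chi_0+\beta+b\psi_2+c\psi_4$ with $\beta\in H^1(F_{13})$, extracts equations only from the $\cup\,\psi_2$ and $\cup\,\psi_4$ components, and runs a four-way case split on $(b,b')$. You keep all four vertices on an equal footing, record the \emph{full} system (including the $\chi_0^2$ component $aa'=0$ and the $\psi_1^2,\psi_3^2$ components, which the paper's displayed system omits), and read each edge equation $p_ip_j'+p_jp_i'=0$ as linear dependence of the vectors $v_i=(p_i,p_i')$ in $\F_2^2$; the observation that distinct nonzero values can only occupy an independent set of $C_4$ then replaces most of the enumeration. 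This buys two things. First, completeness: the omitted equations are actually needed, since for instance $\alpha=\chi_0$, $\alpha'=\psi_1$ satisfies every equation the paper lists while $\chi_0\cup\psi_1=\psi_1^2\neq 0$, so it is your fuller system that honestly eliminates such configurations (in particular in your ``all-zero'' case, which uses $aa'=0$). Second, portability: your argument transfers verbatim to a $\Delta$-RAAG over any graph, with ``non-edge'' playing the role of $\{1,3\}$ and $\{2,4\}$, whereas the paper's split on $(b,b')$ is tied to this specific square. One small point to patch: in the cases you assign to the first or second alternative, the lemma also asserts $\alpha,\alpha'\neq\chi_0$, which you do not state; it is immediate from your setup---in the two-value case two distinct nonzero $v_i$ cannot both have vanishing first (resp.\ second) coordinate, and in the single-value case $v^\ast=(1,1)$ every support vertex contributes $\psi_i$ to both $\alpha$ and $\alpha'$---but the sentence should be added.
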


\begin{proof}
In this proof, we use the following fact coming from Corollary~\ref{cohomology structure} and~Lemma~\ref{cohomG}. If $\alpha$ is in $H^1(G_{13})$, then $\alpha\cup \psi_2=\alpha\cup \psi_4=0$ if and only if $\alpha=0$.

Let us take nontrivial elements $\alpha$ and $\alpha'$ in $H^1(G)$ such that $\alpha\cup \alpha'=0$. We write 
$$\alpha\coloneq a\chi_0+\beta+ b\psi_2+c\psi_4, \quad \text{and} \quad \alpha'\coloneq a'\chi_0+\beta'+ b'\psi_2+c'\psi_4,$$
with $\beta,\beta'$ in the vector space generated by $\psi_1$ and $\psi_3$, that is identified with~$H^1(F_{13})\subset H^1(G_{13})$. Recall that we have the decomposition:
$$H^1(G_{13})\coloneq H^1(F_{13})\bigoplus \chi_0 \F_2.$$
Using the relations from Lemma \ref{cohomG}, we get the following expression for $\alpha\cup \alpha'$:
\begin{multline*}
\alpha\cup \alpha'\coloneq  aa'\chi_0^2+a\chi_0\beta' +a'\beta \chi_0+\beta\beta' +
\\(b\beta'+b'\beta+ (ab'+a'b+bb')\chi_0)\cup \psi_2+
\\(c\beta'+c'\beta+(ac'+a'c+cc')\chi_0)\cup \psi_4.
\end{multline*}

Then, solving $\alpha\cup \alpha'=0$, we identify the last two right-hand terms to infer the following system:

\begin{equation*}
\left\{
\begin{aligned}
b\beta'+b'\beta=0
\\ab'+a'b+bb'=0
\\c\beta'+ c'\beta=0,
\\ac'+a'c+cc'=0.
\end{aligned}
\right.
\end{equation*}

We distinguish several cases:

%$(a)$ If we assume $b=c=0$, then either~$(i)$ $\beta=0$ or $(ii)$ $b'=c'=0$. The case~$(i)$ implies $\alpha=\chi_0$. Thus from Lemma~\ref{cohomG} we infer~$\alpha'=0$. The case~$(ii)$ implies that $\alpha$ and~$\alpha'$ are both in~$H^1(G_{13})$. 

$(a)$ If we assume $(b,b')=(1,0)$, then $\beta'=0$ and $a'=0$. So~$c'=1$. Thus either
$(i)$ $c=0$ or $(ii)$ $c=1$. In the case~$(i)$, we have $\alpha'=\psi_4$, and $\alpha=\psi_2$. In the case~$(ii)$, we have~$c=1$. Thus $\beta=0$ and $a=1$ so~$\alpha=\chi_0+\psi_2+\psi_4$, and $\alpha'=\psi_4$.

$(b)$ The case~$(b,b')=(0,1)$ is symetric to the case~$(a)$.
%Thus, we have $\beta=0$ and~$a=0$. So, $c=0$. Thus either
%$(i)$ $c'=0$, so $$\alpha=\psi_4, \quad \text{and} \quad \alpha'=\psi_2,$$
%or $(ii)$ $c'=1$, thus
%$$\alpha'=\chi_0+\psi_2+\psi_4, \quad \text{and} \quad \alpha=\psi_4.$$
Consequently, in cases $(a)$ and~$(b)$, we always infer that $\alpha$ and $\alpha'$ are in $H^1(G_{24})$.

$(c)$ We assume that $(b,b')=(1,1)$. This case imposes that~$\beta=\beta'$ and $a'=a+1$. We distinguish two cases.
If~$(i)$ $\beta=0$, then $\alpha$ and $\alpha'$ are both in~$H^1(G_{24})$.
If~$(ii)$ $\beta\neq 0$, then $c=c'$, so we infer that~$\alpha'= \alpha+\chi_0$, and $\alpha$, $\alpha'$ are neither in $H^1(G_{13})$  nor in $H^1(G_{24})$.

$(d)$ We assume that~$(b,b')=(0,0)$. Since~$\alpha$ and~$\alpha'$ are nontrivial, we infer that~$c=c'$. We have two case: either~$(i)$ $c=c'=0$ or~$(ii)$ $c=c'=1$. If~$(i)$ then~$\alpha$ and~$\alpha'$ are both in~$H^1(G_{13})$. Else~$(ii)$ gives us~$\beta=\beta'$ and~$a=a'+1$.

We distinguished all cases. To recap:

%$\bullet$ in the case $(a)$, we have $\alpha$ and $\alpha'$ both in $H^1(G_{13})$,

$\bullet$ in the cases $(a)$, $(b)$ and $(c,i)$ we have $\alpha$ and $\alpha'$ both in $H^1(G_{24})$,

$\bullet$ in the case~$(d,i)$ we have~$\alpha$ and $\alpha'$ both in $H^1(G_{13})$,

$\bullet$ in the cases $(c,ii)$ and $(d,ii)$, we have $\alpha'=\alpha+\chi_0$, and $\alpha$ and $\alpha'$ are neither in $H^1(G_{13})$ nor in~$H^1(G_{24})$.
\end{proof}

Let us recall a result from Quadrelli \cite[Lemma $4.2$]{quadrelli2024massey}:

\begin{lemm}\label{lemmquad}
For $n > 2$, there exist matrices $A_1, A_2, B_1, B_2 \in \mathbb{U}_{n+1}$ such that: $(i)$ the $(i, i+1)$-entries of both $A_1$ and $A_2$ are equal to $1$, for $1\leq i \leq n$, $(ii)$ $B_1$ and~$B_2$ are given by

\[
B_1 =
\begin{pmatrix}
1 & 1 & & & & & \ast \\
0& 1 & 0 & & & & \\
 & & 1 & 1 &  & & \\
 & & & 1 & 0  & &\\
 & & &  & \ddots & & \\
 & & & & & & 1
\end{pmatrix},
\quad
B_2 =
\begin{pmatrix}
1 & 0 & & & & & \ast \\
& 1 & 1 & & &  & \\
 & & 1 & 0 & & & \\
 & & & 1 & 1 & & \\
  & & & & \ddots & &  \\
  & & & & & & 1
\end{pmatrix}
\]

and~$(iii)$ they satisfy $\lbrack B_1, A_1\rbrack = A_1^{-2}$ and $\lbrack B_2, A_2\rbrack = A_2^{-2}$.
\end{lemm}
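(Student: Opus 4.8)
The plan is to exhibit the matrices explicitly and to verify condition~$(iii)$ by a direct computation, exploiting two simplifications available over~$\F_2$. Write $N\coloneq \sum_{i=1}^{n}\delta_{i,i+1}\in \mathbb{M}_{n+1}$ for the nilpotent shift, so that any matrix in~$\mathbb{U}_{n+1}$ whose $(i,i+1)$-entries all equal~$1$ has the form $\mathbb{I}_{n+1}+N+C$ with~$C$ supported strictly above the first superdiagonal. First I would take $A_1,A_2$ of the form $\mathbb{I}_{n+1}+N$, allowing myself to add (possibly different) correction terms in~$C$ later; condition~$(i)$ then holds by construction, and~$(ii)$ fixes $B_1,B_2$ up to their free top-right corner entry at position~$(1,n+1)$, which I keep as a parameter. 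The key reduction is that each prescribed $B_j$ satisfies $B_j^2=\mathbb{I}_{n+1}$ over~$\F_2$: the two-block pattern $\sum\delta_{2k-1,2k}$ (resp.\ $\sum\delta_{2k,2k+1}$) squares to zero since consecutive blocks never chain, and the corner term is annihilated by everything. Hence $B_j^{-1}=B_j$, and the target relation $\lbrack B_j,A_j\rbrack=A_j^{-2}$ becomes the conjugation identity $B_jA_jB_j=A_j^{3}$.

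The main computation is then to expand both sides of $B_jA_jB_j=A_j^{3}$. On the right, $(\mathbb{I}_{n+1}+N)^2=\mathbb{I}_{n+1}+N^2$ over~$\F_2$, so $A_j^{-2}=\mathbb{I}_{n+1}+N^2+N^4+\cdots$ and $A_j^{3}=\mathbb{I}_{n+1}+N+N^2+N^3$ when $A_j=\mathbb{I}_{n+1}+N$; adding a correction $C$ supported at $\delta_{1,3}$, $\delta_{2,4}$ or $\delta_{1,n+1}$ changes $A_j^3$ only through square-zero pieces that commute with~$N$ (one uses $N^k\delta_{1,n+1}=\delta_{1,n+1}N^k=0$). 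On the left I would compute $B_jAB_j$ by tracking how the block pattern conjugates~$N$; because the blocks of $B_1$ and $B_2$ occupy complementary superdiagonal positions, a short induction on~$n$ shows that $B_jAB_j$ agrees with $A^3$ in every entry except possibly the top-right corner~$(1,n+1)$.

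Everything therefore reduces to matching a single coordinate in~$\F_2$. I would record the resulting corner discrepancy $\varepsilon_j\in\F_2$ — a quantity depending only on the parity of~$n$ — and then cancel it using the free parameters: either the corner entry of~$B_j$ or a correction entry of~$A_j$ (for example the $(1,3)$ or $(2,4)$ entry, each of which feeds into $A_j^3$ exactly once). A direct check in the case $n=3$ already exhibits the mechanism: with $A_1=\mathbb{I}_4+N$ the matrix $B_1$ works outright, whereas $B_2$ forces exactly one correction entry in~$A_2$ (e.g.\ the $(1,3)$ entry), confirming that there is always enough freedom. The main obstacle is precisely this bookkeeping: carrying out the conjugation $B_jAB_j$ cleanly for all~$n$ and isolating the top-right corner, together with checking that the single remaining corner equation can always be solved over~$\F_2$ by the available free entries. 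Once the corner is controlled, conditions $(i)$ and~$(ii)$ are immediate from the chosen shapes.
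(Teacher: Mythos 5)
Your reduction --- since $B_j^2=\mathbb{I}_{n+1}$, the relation $[B_j,A_j]=A_j^{-2}$ under this paper's convention $[x,y]=x^{-1}y^{-1}xy$ becomes $B_jA_jB_j=A_j^3$ --- is algebraically correct, but it is exactly where the proof becomes unsalvageable: the equation $B_jA_jB_j=A_j^3$ has \emph{no} solutions of the required shape once $n\geq 8$, so you are attempting to prove a false statement. Indeed, conjugation by $B_j$ has order dividing $2$, so $B_jA_jB_j=A_j^3$ would force $A_j=B_j(B_jA_jB_j)B_j=(B_jA_jB_j)^3=A_j^9$, i.e.\ $A_j^8=\mathbb{I}_{n+1}$; but writing $A_j=\mathbb{I}_{n+1}+M$ with all superdiagonal entries of $M$ equal to $1$, over $\F_2$ one has $A_j^8=\mathbb{I}_{n+1}+M^8$ and $(M^8)_{1,9}=1$ (the unique increasing path of length $8$ from $1$ to $9$ uses only superdiagonal steps), so $A_j^8\neq\mathbb{I}_{n+1}$ whenever $n\geq 8$, no matter how the free entries of $A_j$ and the corner of $B_j$ are chosen. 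The source of the trouble is a clash of conventions: the lemma is quoted verbatim from Quadrelli, and as printed it is only consistent with the convention $[x,y]=xyx^{-1}y^{-1}$, under which $[B_j,A_j]=A_j^{-2}$ says precisely $B_jA_jB_j^{-1}=A_j^{-1}$ (in this paper's convention the same content would read $[B_j,A_j]=A_j^{2}$). That dihedral relation is also the only one the paper can use: in Theorem~\ref{ex strong Massey}, $\rho$ must kill the relator $x_0x_jx_0x_j$, i.e.\ $(BA)^2=\mathbb{I}_{n+1}$, which is $BAB^{-1}=A^{-1}$. Your $n=3$ check cannot detect any of this, because $A^4=\mathbb{I}_4$ in $\mathbb{U}_4$, hence $A^3=A^{-1}$ and the two readings coincide there. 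Finally, even granting your target, the key intermediate claim fails on its own terms: for $A=\mathbb{I}_{n+1}+N$ and $B_1=\mathbb{I}_{n+1}+\sum_k\delta_{2k-1,2k}$ one computes $B_1AB_1=\mathbb{I}_{n+1}+N+N^2+\sum_k\delta_{2k-1,2k+2}$, so $B_1AB_1+A^3=\sum_{j\,\mathrm{even}}\delta_{j,j+3}$: the discrepancy fills the third superdiagonal and is not confined to the $(1,n+1)$ corner as soon as $n\geq 4$.

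For the record, the paper offers no proof of this lemma (it is only a citation), so a self-contained argument is genuinely worth writing --- but for the correctly interpreted statement, and then it is short. Set $P_1\coloneq\sum_k\delta_{2k-1,2k}$ and $P_2\coloneq\sum_k\delta_{2k,2k+1}$. Consecutive blocks never chain, so $P_1^2=P_2^2=0$, and the corner term $c\,\delta_{1,n+1}$ multiplies everything to zero; hence $B_1\coloneq\mathbb{I}_{n+1}+P_1+c_1\delta_{1,n+1}$ and $C\coloneq\mathbb{I}_{n+1}+P_2$ are involutions. Now \emph{define} $A_1\coloneq B_1C$. Its superdiagonal is $P_1+P_2=N$, so all its $(i,i+1)$-entries are $1$, and
$$B_1A_1B_1^{-1}=B_1(B_1C)B_1=CB_1=(B_1C)^{-1}=A_1^{-1},$$
which is $[B_1,A_1]=A_1^{-2}$ in the convention of the quoted source. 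Symmetrically, $A_2\coloneq B_2C'$ with $C'\coloneq\mathbb{I}_{n+1}+P_1$ handles $B_2$. In words: the lemma says no more than that the product of the two complementary alternating involutions is inverted by conjugation by either factor; no entry-by-entry matching, corner bookkeeping, or induction on $n$ is needed.
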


We conclude this subsection.
\begin{theo}\label{ex strong Massey}
The group $G$ satisfies the strong Massey Vanishing property.
\end{theo}

\begin{proof}
Let us take a family $\{\alpha_1, \dots, \alpha_n\}$ of characters in $H^1(G)$ satisfying $\alpha_i\cup \alpha_{i+1}=0$ for every $1\leq i \leq n-1$. We construct a morphism $\rho\colon G \to \mathbb{U}_{n+1}$ such that $\rho_{i,i+1}=\alpha_i$. From \cite[Proposition $2.8$]{quadrelli2023massey}, we can assume that $\alpha_i\neq 0$ for every $i$. Then from Lemma~\ref{cupzero}, we are in one of the following cases. Either:

$(a)$ for every $i$, the character $\alpha_i$ is in $H^1(G_{13})$,

$(b)$ for every $i$, the character $\alpha_i$ is in $H^1(G_{24})$,

$(c)$ for every $i$, the character $\alpha_i$ is neither in $H^1(G_{13})$ nor in $H^1(G_{24})$, but satisfies the relation:
$$\alpha_{i+1}=\alpha_i+\chi_0.$$ 

We study the case $(a)$. We observe that $G_{13}$ is in $\PP$. So~$G_{13}$ satisfies the strong Massey Vanishing property. Consequently,
%(resp. $(b)$), 
there exists a morphism~$\eta_{13}\colon G_{13}\to \mathbb{U}_{n+1}$ 
%(resp.~$\eta_{24}\colon G_{24} \to \mathbb{U}_{n+1}$) 
such that $\eta_{13,i,i+1}=\alpha_i|_{G_{13}}$. 
%(resp. $\eta_{24,i,i+1}=\alpha_i|_{G_{24}}$). 
Then we define:
$$\rho(x_1)=\eta_{13}(x_1), \quad \rho(x_3)=\eta_{13}(x_3), \quad \rho(x_0)=\eta_{13}(x_0), \quad \text{and } \rho(x_2)=\rho(x_4)=\mathbb{I}_{n+1}.$$

The case $(b)$ is similar, since $G_{24}$ is also in $\PP$.

For the case $(c)$, we infer that $\alpha_{2i}=\alpha_1+\chi_0$, and $\alpha_{2i+1}=\alpha_1$. We consider $A_1$, $A_2$, $B_1$ and $B_2$ the matrices defined in Lemma \ref{lemmquad}. If $\alpha_1(x_0)=1$, we take~$A\coloneq A_1$ and $B\coloneq B_1$. Else we take $A\coloneq A_2$ and $B\coloneq B_2$.
\\Let us define $\rho(x_0)\coloneq B$. Then we have $$\rho(x_0)_{2i-1,2i}=\alpha_{1}(x_0), \quad \text{and } \rho(x_0)_{2i,2i+1}=\alpha_{2}(x_0).$$
If $1\leq j \leq 4$, we observe that for every $1\leq u, v\leq n$, we have $\alpha_u(x_j)=\alpha_v(x_j)$. Thus if $\alpha_1(x_j)=0$, we define $\rho(x_j)\coloneq \mathbb{I}_{n+1}$. If $\alpha_1(x_j)=1$, we define $\rho(x_j)=A$. We obtain a morphism $\rho\colon G \to \mathbb{U}_{n+1}$ which satisifes $\rho_{i,i+1}=\alpha_i$.

\begin{comment}
$\bullet$ If there exists $i$ such that $\alpha_i=0$, we can assume there exists integers $n_1<n_2< \dots < n_k=n+1$ such that 
$$\alpha_i\neq 0, \text{ for } N_{2l-1}<i\leq N_{2l}, \quad \text{and } \alpha_i=0, \text{ for } N_{2l}<i \leq N_{2l+1},$$
where we define $N_l\coloneq n_1+\dots+n_l$ and $m_l\coloneq n_{l+1}-n_{l-1}$.

Using the previous part, we can define for every $l$ a map $\rho_{2l+1}\colon G \to \mathbb{U}_{m_{2l+1}+1}$ such that $\rho_{2l+1}|_i=\alpha_i|_{i+N_{2l+1}}$. Thus we define by block a map:
$$\rho\coloneq {\rm diag}(\mathbb{I}_{m_1-1}; \rho_2; \mathbb{I}_{m_2-1}; \rho_4; \dots)\colon G \to \mathbb{U}_{n+1}.$$
This map is a morphism and check $\rho_{i,i+1}\coloneq \alpha_i$.
\end{comment}
\end{proof}

\subsection{Product of free groups}
Let us again consider~$\Gamma$ the square graph. The pro-$2$ group~$G_\Gamma\simeq F_{13}\times F_{24}$ is already known not to be a 
maximal pro-$2$ quotient of an absolute Galois group. Quadrelli~\cite[Theorem~$5.6$]{Quadrelli2014} showed that this group is not Bloch-Kato, i.e.\ there exists a closed subgroup of~$G_\Gamma$ with non quadratic cohomology.

From Remark~\ref{kernel uni product}, the group~$G_\Gamma$ satisfies the Kernel Unipotent property. Furthermore, a presentation of~$G_\Gamma$ is given by:
$$\langle x_1, x_2, x_3, x_4|\quad \lbrack x_1,x_2\rbrack=\lbrack x_2,x_3\rbrack= \lbrack x_3,x_4\rbrack=\lbrack x_1,x_4\rbrack=1\rangle.$$
This presentation is minimal. In fact, it is mild. We refer to \cite{Labute} for definitions. Furthermore the first author~\cite[Proposition~$1.7$]{hamza2023extensions} showed that~$\E(G_\Gamma)\simeq \E_\Gamma$. Consequently, the group~$G_\Gamma$ satisfies the Koszul property. A presentation of~$H^\bullet(G_\Gamma)$ is given by generators $\{\psi_1, \psi_2, \psi_3, \psi_4\}$ and relations:
$$\{\psi_i^2, \psi_1\cup \psi_3, \psi_2\cup \psi_4, \psi_u\cup \psi_v+\psi_v\cup \psi_u|\quad 1\leq i \leq 4, 1\leq u,v\leq 4\}.$$
In particular, a basis of~$H^2(G_\Gamma)$ is given by~$\{\psi_1\cup \psi_2, \psi_2\cup \psi_3, \psi_3\cup \psi_4, \psi_1\cup \psi_4\}$.

We recall that we denote by $F_{13}$ and $F_{24}$ the subgroups of~$G_\Gamma$ generated by~$\{x_1,x_3\}$ and $\{x_2,x_4\}$. Let us also recall that these groups are pro-$2$ free on two generators. Thus they satisfy the strong Massey Vanishing property. Furthermore, we can identify~$H^1(F_{13})$ with the vector space generated by~$\{\psi_1, \psi_3\}$ and~$H^1(F_{24})$ with the vector space generated by~$\{\psi_2, \psi_4\}$. We also observe that~$H^1(F_{13})\cup H^1(F_{13})=H^1(F_{24})\cup H^1(F_{24})=0$. Furthermore, if we take~$\alpha$ in~$H^1(F_{13})$, we observe that $\alpha\cup \psi_2=\alpha\cup\psi_4=0 \iff \alpha=0$.

Similarly to Lemma~\ref{cupzero}, we have the following result.
\begin{lemm}
Let us take~$\alpha$ and $\alpha'$ two nontrivial characters in~$H^1(G_\Gamma)$ such that~$\alpha\cup\alpha'=0$. We have the following alternative. Either:

$(i)$ $\alpha$ and $\alpha'$ are in $H^1(F_{13})$,

$(ii)$ or $\alpha$ and $\alpha'$ are in $H^1(F_{24})$,

$(iii)$ or $\alpha=\alpha'$.
\end{lemm}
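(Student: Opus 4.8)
The plan is to follow exactly the strategy of Lemma~\ref{cupzero}, stripped of the $\chi_0$-direction since $G_\Gamma$ carries no distinguished involution. Using the direct sum decomposition $H^1(G_\Gamma) = H^1(F_{13}) \oplus H^1(F_{24})$, I would write
$$\alpha = \beta + b\psi_2 + c\psi_4, \qquad \alpha' = \beta' + b'\psi_2 + c'\psi_4,$$
where $\beta, \beta'$ lie in $H^1(F_{13}) = \langle \psi_1, \psi_3\rangle$ and $b, c, b', c' \in \F_2$. The nontriviality of $\alpha$ and $\alpha'$ means that $(\beta, b, c) \neq 0$ and $(\beta', b', c') \neq 0$.

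Next I would expand $\alpha\cup\alpha'$ using the facts recalled just above the statement: $\psi_i^2 = 0$, the vanishing $\psi_2\cup\psi_4 = 0$, the identity $H^1(F_{13})\cup H^1(F_{13}) = 0$, and graded-commutativity over $\F_2$ (which makes the cup product symmetric). All self-products drop out and the cross terms collapse to
$$\alpha\cup\alpha' = (b'\beta + b\beta')\cup\psi_2 + (c'\beta + c\beta')\cup\psi_4.$$
The first summand lies in $\langle \psi_1\cup\psi_2, \psi_2\cup\psi_3\rangle$ and the second in $\langle \psi_1\cup\psi_4, \psi_3\cup\psi_4\rangle$; since these four classes form the basis of $H^2(G_\Gamma)$, the two summands sit in complementary subspaces. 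Hence $\alpha\cup\alpha' = 0$ forces each to vanish separately. Moreover the maps $\gamma\mapsto\gamma\cup\psi_2$ and $\gamma\mapsto\gamma\cup\psi_4$ from $H^1(F_{13})$ to $H^2(G_\Gamma)$ are each injective — their images are spanned by basis vectors — so I obtain the two $\F_2$-linear equations $b'\beta + b\beta' = 0$ and $c'\beta + c\beta' = 0$.

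Finally I would run the casework over $\F_2$. If $\beta, \beta'$ are linearly independent they form a basis of the two-dimensional space $H^1(F_{13})$, forcing $b = b' = c = c' = 0$, so $\alpha = \beta$ and $\alpha' = \beta'$ lie in $H^1(F_{13})$, giving $(i)$. If $\beta = \beta' = 0$ then $\alpha, \alpha' \in H^1(F_{24})$, giving $(ii)$. If $\beta = \beta' \neq 0$ — the only remaining dependent case over $\F_2$ with both nonzero — the equations give $b = b'$ and $c = c'$, whence $\alpha = \alpha'$, giving $(iii)$. The two ``one-sided'' cases where exactly one of $\beta, \beta'$ vanishes are excluded: for instance $\beta\neq 0$, $\beta' = 0$ forces $b' = c' = 0$ and hence $\alpha' = 0$, contradicting nontriviality.

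I expect no serious obstacle here; the only point demanding care is verifying that the families $\{\psi_1\cup\psi_2, \psi_2\cup\psi_3\}$ and $\{\psi_1\cup\psi_4, \psi_3\cup\psi_4\}$ are genuinely complementary inside $H^2(G_\Gamma)$ and that the multiplications by $\psi_2$ and $\psi_4$ are injective on $H^1(F_{13})$, both of which follow directly from the explicit basis of $H^2(G_\Gamma)$ recalled above. The argument is strictly simpler than Lemma~\ref{cupzero}, since the absence of $\chi_0$ removes the mixed cases that there produced the relation $\alpha' = \alpha + \chi_0$.
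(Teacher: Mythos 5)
Your proof is correct and follows essentially the same route as the paper: the same decomposition $H^1(G_\Gamma)=H^1(F_{13})\oplus H^1(F_{24})$, the same expansion of $\alpha\cup\alpha'$ into the two linear conditions $b'\beta+b\beta'=0$ and $c'\beta+c\beta'=0$ via the injectivity of cupping with $\psi_2$ and $\psi_4$ on $H^1(F_{13})$ and the complementarity of their images in $H^2(G_\Gamma)$. The only cosmetic difference is organizational: you run the case analysis on the linear dependence of $(\beta,\beta')$ over $\F_2$, whereas the paper splits on whether $\alpha$ lies in $H^1(F_{13})$, in $H^1(F_{24})$, or in neither (treating the first two cases as immediate and deriving $\alpha=\alpha'$ in the third).
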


\begin{proof}
The assertions~$(i)$ and~$(ii)$ are clear. Let us show~$(iii)$. We assume that~$\alpha$ is neither in~$H^1(F_{13})$ nor in~$H^1(F_{24})$. Thus there exists~$\beta$ in~$H^1(F_{13})$ different from zero and a couple $(a,b)\neq (0,0)$ in $\F_2^2$ such that $\alpha\coloneq \beta +a\psi_2+b\psi_3$. 

Let us write~$\alpha'\coloneq \beta'+a'\psi_2+b'\psi_4$ with~$\beta'\in H^1(F_{13})$. Then from~$\alpha\cup \alpha'=0$, we infer:
$$\alpha\cup \alpha'=\beta\cup \beta'+(a\beta'+a'\beta)\cup \psi_2+(b\beta'+b'\beta)\cup \psi_4=0.$$
Thus~$a\beta'=a'\beta$ and~$b\beta'=b'\beta$. Since $(a,b)\neq (0,0)$, we infer:
$$a=a', \quad b=b',\quad \beta=\beta', \quad \text{so}\quad \alpha=\alpha'.$$
\end{proof}

Consequently, as Theorem~\ref{ex strong Massey}, we can show that~$G_\Gamma$ checks the strong Massey Vanishing property.

\begin{prop}\label{square graph}
Let~$\Gamma$ be the square graph. Then the pro-$2$ RAAG $G_\Gamma$ is not the maximal pro-$2$ quotient of an absolute Galois group, but satisfies the Koszul, the Kernel Unipotent and the strong Massey Vanishing property.
\end{prop}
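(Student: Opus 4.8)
The plan is to observe that three of the four assertions are already recorded in the preceding discussion, so that only the strong Massey Vanishing property requires a genuine argument, and that argument I would model verbatim on the proof of Theorem~\ref{ex strong Massey}. First I would note that $G_\Gamma$ is not a maximal pro-$2$ quotient of an absolute Galois group: this is the non-Bloch--Kato statement of Quadrelli~\cite[Theorem~$5.6$]{Quadrelli2014} (see also~\cite[Theorem~$1.2$]{snopce2022right}). Koszulity is immediate from the identification $\E(G_\Gamma)\simeq \E_\Gamma$ recalled in the previous subsection, together with the Koszulity of RAAG algebras (\cite[Theorem~$1.2$]{bartholdi2020right}, via~\cite[Proposition~$1.7$]{hamza2023extensions}). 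Finally the Kernel Unipotent property is exactly the content of Remark~\ref{kernel uni product}, using the direct product decomposition $G_\Gamma\simeq F_{13}\times F_{24}$ and the Kernel Unipotent property of free pro-$2$ groups.

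The only substantial point is the strong Massey Vanishing property, for which I would reason as in Theorem~\ref{ex strong Massey}. Fix a family $\{\alpha_1,\dots,\alpha_n\}$ in $H^1(G_\Gamma)$ with $\alpha_i\cup\alpha_{i+1}=0$; by~\cite[Proposition~$2.8$]{quadrelli2023massey} I may assume every $\alpha_i$ is nonzero. The key input is the trichotomy of the lemma immediately preceding this proposition, which I would propagate along the whole chain. Since $H^1(F_{13})\cap H^1(F_{24})=0$, two consecutive characters cannot lie one purely in $H^1(F_{13})$ and the other purely in $H^1(F_{24})$; moreover, if some $\alpha_i$ lies in neither $H^1(F_{13})$ nor $H^1(F_{24})$, then the lemma forces each of its neighbours to equal it, whence by connectivity all the $\alpha_j$ coincide. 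This yields exactly three global cases: all $\alpha_i$ lie in $H^1(F_{13})$, or all in $H^1(F_{24})$, or all the $\alpha_i$ equal a single character $\alpha$.

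For the first two cases I would use that $F_{13}$ and $F_{24}$ are free pro-$2$ on two generators and therefore satisfy the strong Massey Vanishing property. When all $\alpha_i\in H^1(F_{13})$, I obtain $\eta\colon F_{13}\to \mathbb{U}_{n+1}$ realising the restricted characters and set $\rho(x_1)=\eta(x_1)$, $\rho(x_3)=\eta(x_3)$ and $\rho(x_2)=\rho(x_4)=\mathbb{I}_{n+1}$; this respects the defining relations of $G_\Gamma$ because the identity commutes with everything, and it has the required superdiagonal. The case $H^1(F_{24})$ is symmetric. For the last case I would take $A\coloneq \mathbb{I}_{n+1}+N$, where $N$ is the single nilpotent Jordan block with $1$'s on the superdiagonal, and put $\rho(x_j)\coloneq A$ when $\alpha(x_j)=1$ and $\rho(x_j)\coloneq \mathbb{I}_{n+1}$ otherwise; all generator images are powers of the single matrix $A$, so they commute and the relations of $G_\Gamma$ hold, while the $(i,i+1)$-entry of $\rho(x_j)$ equals $\alpha(x_j)$, forcing $\rho_{i,i+1}=\alpha=\alpha_i$.

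The main obstacle is the global case analysis rather than any matrix construction: one must verify that the pairwise trichotomy of the preceding lemma glues into a single global alternative along the chain, which rests precisely on $H^1(F_{13})\cap H^1(F_{24})=0$ and on the rigidity of the characters lying in neither factor. I would emphasise that, unlike case $(c)$ of Theorem~\ref{ex strong Massey}, the corresponding case here is markedly easier: because $G_\Gamma$ contains no involution and hence no $\chi_0$-twist, the common superdiagonal is constant and a single unipotent matrix suffices, so the elaborate matrices of Lemma~\ref{lemmquad} are not needed.
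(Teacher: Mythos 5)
Your proposal is correct and follows essentially the same route as the paper: the first three properties are quoted from the preceding discussion, and the strong Massey Vanishing property is proved via the trichotomy lemma, freeness of $F_{13}$ and $F_{24}$, and the single unipotent matrix $A=\mathbb{I}_{n+1}+\sum_i\delta_{i,i+1}$ in the remaining case. You merely spell out details the paper leaves implicit (the reduction to nontrivial characters, the propagation of the trichotomy along the chain using $H^1(F_{13})\cap H^1(F_{24})=0$, and the extension of $\eta$ from $F_{13}$ to $G_\Gamma$), which is a faithful elaboration rather than a different argument.
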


\begin{proof}
It remains to show that~$G_\Gamma$ checks the strong Massey Vanishing property. Let us consider a family~$\alpha \coloneq \{\alpha_1,\dots, \alpha_n\}$ of characters such that~$\alpha_i\cup \alpha_{i+1}=0$ for every~$1\leq i \leq n-1$. We construct a map $\rho\colon G_\Gamma \to \mathbb{U}_{n+1}$ such that~$\rho_{i,i+1}=\alpha_i$ for~$1\leq i \leq n$.

If~$\alpha_1$ is either in~$H^1(F_{13})$ or~$H^1(F_{24})$, we conclude using the fact that~$F_{13}$ and~$F_{24}$ are free so check the strong Massey Vanishing property.

Assume that~$\alpha_1$ is neither in~$H^1(F_{13})$ nor in~$H^1(F_{24})$. Then for every~$1\leq i \leq n-1$, we have~$\alpha_i=\alpha_{i+1}$. Let us define~$A\coloneq \mathbb{I}_{n+1}+\sum_{i=1}^n \delta_{i,i+1}$  where $\delta_{i,i+1}$ is the matrix which is~zero everywhere except in~$(i,i+1)$.

If $\alpha_1(x_i)=1$, we define~$\rho(x_i)\coloneq A$. Else~$\rho(x_i)\coloneq \mathbb{I}_{n+1}$. Since~$A$ and $\mathbb{I}_{n+1}$ commutes, the morphism~$\rho$ is well-defined. Thus~$G_\Gamma$ checks the strong Massey Vanishing property.
\end{proof}

\bibliography{bibactbib3}

\begin{thebibliography}{10}

\bibitem{artin1927kennzeichnung}
E.~Artin and O.~Schreier.
\newblock Eine \uppercase{K}ennzeichnung der reell abgeschlossenen
  \uppercase{K}{\"o}rper.
\newblock In {\em Abhandlungen aus dem Mathematischen Seminar der
  Universit{\"a}t Hamburg}, volume~5, pages 225--231. Springer, 1927.

\bibitem{bartholdi2020right}
L.~Bartholdi, H.~H{\"a}rer, and T.~Schick.
\newblock Right \uppercase{A}ngled \uppercase{A}rtin \uppercase{G}roups and
  partial commutation, old and new.
\newblock {\em L’Enseignement Math{\'e}matique}, 66(1):33--61, 2020.

\bibitem{bary2015sylow}
L.~Bary-Soroker, M.~Jarden, and D.~Neftin.
\newblock The \uppercase{S}ylow subgroups of the absolute \uppercase{G}alois
  group \uppercase{G}al($\mathbb{Q}$).
\newblock {\em Advances in Mathematics}, 284:186--212, 2015.

\bibitem{blumer2023oriented}
S.~Blumer, C.~Quadrelli, and T.~Weigel.
\newblock Oriented right-angled \uppercase{A}rtin pro-$\ell$ groups and maximal
  pro-$\ell$ \uppercase{G}alois groups.
\newblock {\em International Mathematics Research Notices}, 2024(8):6790--6819,
  2024.

\bibitem{DDMS}
J.~Dixon, M.~Du~Sautoy, A.~Mann, and D.~Segal.
\newblock {\em Analytic pro-$\mathit{p}$ groups}.
\newblock Number~61. Cambridge University Press, 2003.

\bibitem{efrat1994galois}
I.~Efrat and D.~Haran.
\newblock On \uppercase{G}alois groups over \uppercase{P}ythagorean and
  semi-real closed fields.
\newblock {\em Israel Journal of Mathematics}, 85:57--78, 1994.

\bibitem{fried2005field}
M.D. Fried and M.~Jarden.
\newblock {\em Field arithmetic}.
\newblock Springer, 2005.

\bibitem{guillot2018four}
P.~Guillot, J.~Min{\'a}{\v{c}}, and A.~Topaz.
\newblock Four-fold \uppercase{M}assey products in \uppercase{G}alois
  cohomology, \uppercase{W}ith an appendix by \uppercase{O}livier
  \uppercase{W}ittenberg.
\newblock {\em Compositio Mathematica}, 154(9):1921--1959, 2018.

\bibitem{haesemeyer2019norm}
C.~Haesemeyer and C.A. Weibel.
\newblock {\em The norm \uppercase{R}esidue \uppercase{T}heorem in
  \uppercase{M}otivic cohomology}, volume 200.
\newblock Princeton University Press, 2019.

\bibitem{hamza2023zassenhaus}
O.~Hamza.
\newblock Zassenhaus and lower central filtrations of pro-$p$ groups considered
  as modules.
\newblock {\em Journal of Algebra}, 633:172--204, 2023.

\bibitem{hamza2023extensions}
O.~Hamza.
\newblock On extensions of number fields with given quadratic algebras and
  cohomology.
\newblock {\em Manuscripta Math.}, 176(1):Paper No. 8, 21, 2025.

\bibitem{haran2021absolute}
Dan Haran and Moshe Jarden.
\newblock {\em The absolute \uppercase{G}alois group of a semi-local field}.
\newblock Springer, 2021.

\bibitem{harpaz2023massey}
Y.~Harpaz and O.~Wittenberg.
\newblock {The \uppercase{M}assey vanishing conjecture for number fields}.
\newblock {\em Duke Mathematical Journal}, 172(1):1 -- 41, 2023.

\bibitem{jacob1981structure}
B.~Jacob.
\newblock On the structure of \uppercase{P}ythagorean fields.
\newblock {\em Journal of Algebra}, 68(2):247--267, 1981.

\bibitem{kim}
K.~H. Kim, L.~Makar-Limanov, J.~Neggers, and F.~W. Roush.
\newblock Graph algebras.
\newblock {\em Journal of Algebra}, 64(1):46--51, 1980.

\bibitem{Labute}
J.~Labute.
\newblock Mild pro-$p$-groups and \uppercase{G}alois groups of
  $\mathit{p}$-extensions of $\mathbb{Q}$.
\newblock {\em J. Reine Angew. Math.}, 596:155--182, 2006.

\bibitem{lam2005introduction}
T.-Y. Lam.
\newblock {\em Introduction to quadratic forms over fields}, volume~67.
\newblock American Mathematical Soc., 2005.

\bibitem{LAZ}
M.~Lazard.
\newblock Groupes analytiques $p$-adiques.
\newblock {\em Publications Math{\'e}matiques de l'IH{\'E}S}, 26:5--219, 1965.

\bibitem{leoni2024zassenhaus}
G.~Leoni.
\newblock {\em The \uppercase{Z}assenahus $p$-restricted \uppercase{L}ie
  algebra functor}.
\newblock PhD thesis, Universita degli Studi di Milano-Bicocca, 2024.

\bibitem{loday2012algebraic}
J.L. Loday and B.~Vallette.
\newblock {\em Algebraic operad}.
\newblock Springer, 2012.

\bibitem{maire2024strong}
C.~Maire, J.~Min{\'a}{\v{c}}, R.~Ramakrishna, and N.D. Tân.
\newblock On the strong \uppercase{M}assey property for number fields.
\newblock {\em arXiv preprint arXiv:2409.01028}, 2024.

\bibitem{marshall1979classification}
M.~Marshall.
\newblock Classification of finite spaces of orderings.
\newblock {\em Canadian Journal of Mathematics}, 31(2):320--330, 1979.

\bibitem{merkurjev2024lectures}
A.~Merkurjev and F.~Scavia.
\newblock Lectures on the \uppercase{M}assey vanishing conjecture.
\newblock \url{https://www.math.ucla.edu/%7Emerkurev/publicat.htm}, 2024.
\newblock \textit{To appear in the \uppercase{P}roceedings of the
  \uppercase{W}orkshop on \uppercase{G}alois \uppercase{C}ohomology and
  \uppercase{M}assey products \uppercase{J}une 13-16, 2024 in
  \uppercase{O}ttawa}.

\bibitem{minac1986galois}
J.~Min{\'a}{\v c}.
\newblock Galois groups of some 2-extensions of ordered fields.
\newblock {\em CR Math. Rep. Acad. Sci. Canada}, 8:103--108, 1986.

\bibitem{minac1986thesis}
J.~Min{\'a}{\v c}.
\newblock {\em \uppercase{G}alois groups, \uppercase{O}rder Spaces and
  \uppercase{V}aluations}.
\newblock PhD thesis, Queen's University, 1986.

\bibitem{minachilbseries}
J.~Min{\'a}{\v c}.
\newblock Poincar\'{e} \uppercase{P}olynomials, \uppercase{S}tability
  \uppercase{I}ndices and \uppercase{N}umber of \uppercase{o}rderings. {I},
  \uppercase{A}dvances in \uppercase{N}umber \uppercase{T}heory ({K}ingston,
  {ON}, 1991).
\newblock {\em Oxford Sci. Publ.}, pages 515--528, 1993.

\bibitem{minac2020enhanced}
J.~Min{\'a}{\v{c}}, M.~Palaisti, F.~Pasini, and Duy T{\^a}n.
\newblock Enhanced \uppercase{K}oszul properties in \uppercase{G}alois
  cohomology.
\newblock {\em Research in the Mathematical Sciences}, 7(2):1--34, 2020.

\bibitem{minac2021koszul}
J.~Min{\'a}{\v{c}}, F.~Pasini, C.~Quadrelli, and N.D. T{\^a}n.
\newblock Koszul algebras and quadratic duals in \uppercase{G}alois cohomology.
\newblock {\em Advances in Mathematics}, 380:107569, 2021.

\bibitem{Minac}
J.~Min{\'a}{\v c}, M.~Rogelstad, and N.D. T{\^a}n.
\newblock Dimensions of \uppercase{Z}assenhaus filtration subquotients of some
  pro-$\mathit{p}$-groups.
\newblock {\em Israel Journal of Mathematics}, 212(2):825--855, 2016.

\bibitem{minac1990formally}
J.~Min{\'a}{\v c} and M.~Spira.
\newblock Formally real fields, \uppercase{P}ythagorean fields,
  \uppercase{C}-fields and \uppercase{W}-groups.
\newblock {\em Mathematische Zeitschrift}, 205(1):519--530, 1990.

\bibitem{minac1996witt}
J.~Min{\'a}{\v c} and M.~Spira.
\newblock Witt rings and \uppercase{G}alois groups.
\newblock {\em Annals of Mathematics}, 144(1):35--60, 1996.

\bibitem{minavc2015kernel}
J.~Min{\'a}{\v{c}} and N.D. T{\^a}n.
\newblock The \uppercase{k}ernel \uppercase{u}nipotent \uppercase{c}onjecture
  and the vanishing of \uppercase{M}assey products for odd rigid fields.
\newblock {\em Advances in Mathematics}, 273:242--270, 2015.

\bibitem{minavc2016triple}
J.~Min{\'a}{\v{c}} and N.D. T{\^a}n.
\newblock Triple \uppercase{M}assey products and \uppercase{G}alois theory.
\newblock {\em Journal of the European Mathematical Society}, 19(1):255--284,
  2016.

\bibitem{NSW}
J.~Neukirch, A.~Schmidt, and K.~Wingberg.
\newblock {\em Cohomology of \uppercase{n}umber \uppercase{f}ields}, volume
  323.
\newblock Springer Science \& Business Media, 2013.

\bibitem{polishchuk2005quadratic}
A.~Polishchuk and L.~Positselski.
\newblock {\em Quadratic Algebras}.
\newblock University lecture series. American Mathematical Society, 2005.

\bibitem{positselski2014galois}
L.~Positselski.
\newblock Galois cohomology of a number field is \uppercase{K}oszul.
\newblock {\em Journal of Number Theory}, 145:126--152, 2014.

\bibitem{Quadrelli2014}
C.~Quadrelli.
\newblock Bloch–\uppercase{K}ato pro-p groups and locally powerful groups.
\newblock {\em Forum Mathematicum}, 26(3):793--814, 2014.

\bibitem{quadrelli2024massey}
C.~Quadrelli.
\newblock Massey products in \uppercase{G}alois cohomology and the elementary
  type conjecture.
\newblock {\em Journal of Number Theory}, 258:40--65, 2024.

\bibitem{quadrelli2023massey}
C.~Quadrelli.
\newblock Massey products in \uppercase{G}alois cohomology and
  \uppercase{P}ythagorean fields.
\newblock {\em Communications in Algebra}, pages 1--16, 2024.

\bibitem{snopce2022right}
I.~Snopce and P.~Zalesskii.
\newblock Right-angled \uppercase{A}rtin pro-$p$ groups.
\newblock {\em Bulletin of the London Mathematical Society}, 54(5):1904--1922,
  2022.

\bibitem{Tra}
N.T. Tr{\`a}.
\newblock Zassenhaus filtrations and right-angled \uppercase{A}rtin groups.
\newblock {\em Journal of Algebra and Its Applications}, 24(08):2550187, 2025.

\bibitem{056424e1-724a-3d58-b863-a9f884daae3d}
V.~Voevodsky.
\newblock On motivic cohomology with $\mathbf{Z}$/l-coefficients.
\newblock {\em Annals of Mathematics}, 174(1):401--438, 2011.

\bibitem{weigel652koszul}
T.~Weigel.
\newblock Koszul \uppercase{L}ie algebras, \uppercase{L}ie algebras and related
  topics (\uppercase{M}. \uppercase{A}vitabile, \uppercase{J}.
  \uppercase{F}eldvoss, and \uppercase{T}. \uppercase{W}eigel, eds.).
\newblock {\em Contemporary Mathematics}, 652:254--255.

\bibitem{weigel2015graded}
T.~Weigel.
\newblock Graded \uppercase{L}ie algebras of type \uppercase{F}\uppercase{P}.
\newblock {\em Israel Journal of Mathematics}, 205(1):185--209, 2015.

\end{thebibliography}
\bibliographystyle{plain}
\end{document}